\newtheorem{thm}{Theorem}[section]
\newtheorem{lem}[thm]{Lemma}
\newtheorem{prop}[thm]{Proposition}
\newtheorem{ques}[thm]{Question}
\newtheorem{cor}[thm]{Corollary}
\theoremstyle{definition}
\newtheorem{de}[thm]{Definition}
\theoremstyle{remark}
\newtheorem{rem}[thm]{Remark}
\numberwithin{equation}{section}
\def \N {\mathbb N}
\def \C {\mathbb C}
\def \Z {\mathbb Z}
\def \R {\mathbb R}
\def \T {\mathbb{T}}
\def \G {\mathbb{G}}
\def \F {\mathcal{F}}
\def \X {\mathcal{X}}
\def \RP {{\bf RP}}
\def \M {{\bf M}}
\def \NN {\mathcal{N}}
\def \a {\alpha }
\def \b {\beta}
\def \ep {\epsilon}
\def \d {\delta}
\def \D {\Delta}
\def \ll {\lambda}
\def \lra{\longrightarrow}
\def \g {\mathfrak{g}}
\def \exp {\text{exp}}
\def \ad {\text{ad}\ }
\begin{document}
\title{Nil Bohr$_0$-sets, Poincar\'e recurrence and generalized polynomials}

\author{Wen Huang}
\author{Song Shao}
\author{Xiangdong Ye}

\address{Department of Mathematics, University of Science and Technology of China,
Hefei, Anhui, 230026, P.R. China.}

\email{wenh@mail.ustc.edu.cn}\email{songshao@ustc.edu.cn}
\email{yexd@ustc.edu.cn}

\subjclass[2000]{Primary: 37B05, 22E25, 05B10} \keywords{Nilpotent
Lie group, nilsystem, Bohr set, Poincar\'e recurrence, and
generalized polynomials}

\thanks{Huang is supported by  Fok Ying Tung Education Foundation,
Shao is supported by NNSF of China (10871186) and Program for New
Century Excellent Talents in University, and Huang+Ye are supported
by NNSF of China (11071231).}

\date{August 24, 2011}

\begin{abstract}

The problem which can be viewed as the higher order version of an
old question concerning Bohr sets is investigated: for any $d\in \N$
does the collection of  $\{n\in \Z: S\cap (S-n)\cap\ldots\cap
(S-dn)\neq \emptyset\}$ with $S$ syndetic coincide with that of
Nil$_d$ Bohr$_0$-sets?

In this paper it is proved that Nil$_d$ Bohr$_0$-sets could be
characterized via generalized polynomials, and applying this result
one side of the problem could be answered affirmatively: for any
Nil$_d$ Bohr$_0$-set $A$, there exists a syndetic set $S$ such that
$A\supset \{n\in \Z: S\cap (S-n)\cap\ldots\cap (S-dn)\neq
\emptyset\}.$ Note that other side of the problem can be deduced
from some result by Bergelson-Host-Kra if modulo a set with zero
density. As applications it is shown that the two collections
coincide dynamically, i.e. both of them can be used to characterize
higher order almost automorphic points.

\end{abstract}

\maketitle

\markboth{Nil-Bohr$_0$ sets}{W. Huang, S. Shao and X.D. Ye}


\section{Introduction}
Combinatorial number theory attracts a lot of attention. In such a
theory, problems concerning Bohr sets are extensively studied, and
have a long history which at least could be traced back to the work
of Veech in 1968 \cite{V68}. Bohr sets are fundamentally abelian in
nature. Nowadays it has become apparent that a higher order
non-abelian Fourier analysis plays a role both in combinatorial
number theory and ergodic theory. Related to this, a higher-order
version of Bohr sets, namely Nil$_d$ Bohr$_0$-sets, was introduced
in \cite{HK10}. For the recent results obtained by Katznelson,
Bergelson-Furstenberg-Weiss and Host-Kra see \cite{Kaz, BFW,HK10}.

\subsection{Nil-Bohr sets}
There are several equivalent definitions for Bohr sets. Here is the
one easy to understand: a subset $A\subseteq \Z$ is a {\em Bohr set}
if there exist $m\in \N$, $\a\in \T^m$, and an open set $U\subseteq
\T^m$ such that $\{n\in \Z: n\a \in U\}$ is contained in $A$; the
set $A$ is a {\em Bohr$_0$-set} if additionally $0\in U$.

It is not hard to see that if $(X,T)$ is a minimal equicontinuous
system, $x\in X$ and $U$ is a neighborhood of $x$, then
$N(x,U)=:\{n\in\Z:T^nx\in U\}$ contains $S-S=:\{a-b: a,b\in S\}$
with $S$ syndetic, i.e. with a bounded gap. An old question
concerning Bohr sets is

\medskip

\noindent {\bf Problem A-I:} \ {Let $S$ be a syndetic set of $\Z$,
is  $S-S$ a Bohr$_0$-set?}

\medskip

That is, are the common differences of arithmetic progressions with
length $2$ appeared in a syndetic set a Bohr$_0$ set? Veech showed
that it is at least ``almost'' true \cite{V68}. That is, given a
syndetic set $S\subseteq \Z$, there is some subset $N$ with density
zero such that $(S-S)\Delta N$ is a Bohr$_0$-set.

\medskip

A subset $A\subseteq \Z$ is a {\em Nil$_d$ Bohr$_0$-set} if there
exist a $d$-step nilsystem $(X,T)$, $x_0\in X$ and an open set
$U\subseteq X$ containing $x_0$ such that $N(x_0,U)=:\{n\in \Z: T^n
x_0\in U\}$ is contained in $A$. Denote by $\F_{d,0}$ the
family\footnote{ A collection $\F$ of subsets of $\Z$ (or $\N$) is
{\em a family} if it is hereditary upward, i.e. $F_1 \subseteq F_2$
and $F_1 \in \F$ imply $F_2 \in \F$. Any nonempty collection
$\mathcal{A}$ of subsets of $\Z$ generates a family $\F(\mathcal{A})
:=\{F \subseteq \Z:F \supset A$ for some $A \in \mathcal{A}\}$.}
consisting of all Nil$_d$ Bohr$_0$-sets. We can now formulate a
higher order form of Problem A-I. We note that $\{n\in \Z: S\cap
(S-n)\cap\ldots\cap (S-dn)\neq \emptyset\}$ can be viewed as the
common differences of arithmetic progressions with length $d+1$
appeared in the subset $S$. In fact, $S\cap (S-n)\cap\ldots\cap
(S-dn)\neq \emptyset$ if and only if there is $m\in S$ with
$m,m+n,\ldots,m+dn\in S$.

\medskip

\noindent {\bf Problem B-I:} [Higher order form of Problem A-I] Let
$d\in\N$.

{\em \begin{enumerate} \item For any Nil$_d$ Bohr$_0$-set $A$, is it
true that there is a syndetic subset $S$ of $\Z$ with
$A\supset\{n\in \Z: S\cap (S-n)\cap\ldots\cap (S-dn)\neq
\emptyset\}$?

\item For any syndetic set $S$, is $\{n\in \Z:
S\cap (S-n)\cap\ldots\cap (S-dn)\neq \emptyset\}$ a Nil$_d$
Bohr$_0$-set?\end{enumerate}}

\medskip

\subsection{Dynamical version of the higher order Bohr problem}
Sometimes combinatorial questions can be translated into dynamical
ones by the Furstenberg correspondence principle, see Section
\ref{FCP}. Using this principle, it can be shown that Problem A-I is
equivalent to the following version:

\medskip

\noindent {\bf Problem A-II:} {\it For any minimal system $(X,T)$
and any nonempty open set $U\subset X$, is the set $\{n\in \Z: U\cap
T^{-n}U\neq \emptyset \}$ a Bohr$_0$-set?}

\medskip

Similarly, Problem B-I has its dynamical version:

\medskip

\noindent {\bf Problem B-II:} [Dynamical version of Problem B-I] Let
$d\in\N$. {\em
\begin{enumerate} \item  For any Nil$_d$ Bohr$_0$-set $A$, it is
true that there are a minimal system $(X,T)$ and a non-empty open
subset $U$ of $X$ with
$$A\supset \{n\in\Z:
U\cap T^{-n}U\cap \ldots \cap T^{-dn}U\not=\emptyset\}?$$

\item For any minimal system $(X,T)$ and any open non-empty $U\subset X$,
is it true that $\{n\in\Z: U\cap T^{-n}U\cap \ldots \cap
T^{-dn}U\not=\emptyset\}$ a Nil$_d$ Bohr$_0$-set?
\end{enumerate}}

\medskip

It follows from some result by Bergelson-Host-Kra in \cite{BHK05}
that Problem B-II(2) has a positive answer if ignoring a set with
zero density. In fact, the authors \cite{BHK05} showed: Let
$(X,\X,\mu, T )$ be an ergodic system and $d\in \N$, then for all
$A\in \X$ with $\mu(A)>0$ the set $I=\{n\in \Z: \mu(A\cap
T^{-n}A\cap \ldots \cap T^{-dn}A)>0\}$ is almost a Nil$_d$
Bohr$_0$-set, i.e. there is some subset $N$ with density zero such
that $I \Delta N$ is a Nil$_d$ Bohr$_0$-set.

\subsection{Main results}

We will show that Problem B-II(1) has an affirmative answer. Namely,
we will show

\medskip

\noindent {\bf Theorem A}: {\em Let $d\in\N$.  If $A\subseteq\Z$ is
a Nil$_d$ Bohr$_0$-set, then there exist a minimal $d$-step
nilsystem $(X,T)$ and a nonempty open set $U$ of $X$ with
\begin{equation*}
    A\supset \{n\in\Z: U\cap T^{-n}U\cap \ldots\cap T^{-dn}U\neq \emptyset \}.
\end{equation*}}
As we said before for $d=1$ Theorem A can be easily proved. To show
Theorem~ A in the general case, we need to investigate the
properties of $\F_{d,0}$. It is interesting that in the process to
do this, generalized polynomials (see \S  \ref{section-GP} for a
definition) appear naturally. Generalized polynomials have been
studied extensively, see for example the nice paper by Bergelson and
Leibman \cite{BL07} and references therein. After finishing this
paper we even find that it also plays an important role in the
recent work by Green, Tao and Ziegler \cite{GTZ}. In fact the
special generalized polynomials defined in this paper are closely
related to the nilcharacters defined there.

Let $\F_{GP_d}$ (resp. $\F_{SGP_d}$) be the family generated by the
sets of forms
$$\bigcap_{i=1}^k\{n\in\Z:P_i(n)(\text{mod}\ \Z)\in (-\ep_i,\ep_i)
\},$$ where $k\in\N$, $P_1,\ldots,P_k$ are generalized polynomials
of degree $\le d$ (resp. special generalized polynomials), and
$\ep_i>0$. For the precise definitions see \S\ref{section-GP}.

The following theorem illustrates the relation between Nil$_d$
Bohr$_0$-sets and the sets defined above using generalized
polynomials.

\medskip

\noindent {\bf Theorem B}: {\em Let $d\in\N$. Then
$\F_{d,0}=\F_{GP_d}$.}

\medskip
To prove Theorem B we first figure out a subclass of generalized
polynomials (called special generalized polynomials) and show that
$\F_{GP_d}=\F_{SGP_d}$. When $d=1$, we have $\F_{1,0}=\F_{SGP_1}$.
This is the result of Katznelson \cite{Kaz}, since $\F_{SGP_1}$ is
generated by sets of forms $\cap_{i=1}^k \{n\in\Z: na_i(\text{mod}\
\Z)\in (-\ep_i,\ep_i)\}$ with $k\in\N$, $a_i\in\R$ and $\ep_i>0$.

Theorem A follows from Theorem B and the following result:

\medskip

\noindent {\bf Theorem C}: {\em Let $d\in\N$.  If $A\in \F_{GP_d}$,
then there exist a minimal $d$-step nilsystem $(X,T)$ and a nonempty
open set $U$ such that
$$A\supset \{n\in\Z: U\cap T^{-n}U\cap \ldots\cap T^{-dn}U\neq \emptyset \}.$$}

The proof of Theorem B is divided into two parts, namely

\medskip

\noindent {\bf Theorem B(1)}: {\em $\F_{d,0}\subset\F_{GP_d}$} and

\medskip

\noindent {\bf Theorem B(2)}: {\em $\F_{d,0}\supset\F_{GP_d}$.}

\medskip

The proof of Theorem B(1) is a theoretical argument using nilpotent
Lie group theory; and the proofs of Theorem B(2) and Theorem C are
very complicated construction and computation where nilpotent matrix
Lie group is used.

\begin{rem} Our definition of generalized polynomials is slight different from the ones
defined in \cite{BL07}. In fact we need to specialize the degree of
the generalized polynomials which is not needed in \cite{BL07}.
Moreover, our Theorem B can be compared with Theorem A of Bergelson
and Leibman proved in \cite{BL07}.
\end{rem}

In \cite{F, F81} Furstenberg introduced the notion of Poincar\'e
recurrence sets and Birkhoff recurrence sets. Here is a
generalization of the above notion. Let $d\in \N$. We say that $S
\subset \Z$ is a set of {\em $d$-recurrence } if for every measure
preserving dynamical system $(X,\X,\mu,T)$ and for every $A\in \X$
with $\mu (A)>0$, there exists $n \in S$  such that
$$\mu(A\cap T^{-n}A\cap \ldots \cap T^{-dn}A)>0.$$

We say that $S\subseteq \Z$ is a set of {\em $d$-topological
recurrence} if for every minimal system $(X, T)$ and for every
nonempty open subset $U$ of $X$, there exists $n\in S$  such that
$$U\cap T^{-n}U\cap \ldots \cap T^{-dn}U\neq \emptyset.$$

\begin{rem} The above definitions are slightly different from the
ones introduced in \cite{FLW}, namely we do not require $n\not=0$.
The main reason we define in this way is that for each $A\in
\F_{d,0}$, $0\in A$. Thus $\{0\}\cup C\in \F^*_{d,0}$ for each
$C\subset \Z$.
\end{rem}

Let $\F_{Poi_d}$ (resp. $\F_{Bir_d}$) be the family generated by the
collection of all sets of $d$-recurrence (resp. sets of
$d$-topological recurrence). It is obvious by the above definition
that $\F_{Poi_d}\subset \F_{Bir_d}$. Moreover, it is known that for
each $d\in\N$, $\F_{Poi_d}\supsetneqq \F_{Poi_{d+1}}$ and
$\F_{Bir_d}\supsetneqq \F_{Bir_{d+1}}$ \cite{FLW}. Now we state a
problem which is related to Problem B-II.

\medskip

\noindent {\bf Problem B-III:} {\em Is it true that $\F_{Bir_d} =
\F^*_{d,0}$?}

\medskip

\noindent where $\F_{d,0}^*$ is the dual family of $\F_{d,0}$, i.e.
the collection of sets intersecting every Nil$_d$ Bohr$_0$ set.
\medskip

An immediate corollary of Theorem A is:

\medskip

\noindent {\bf Corollary D}: {\em Let $d\in\N$. Then
$$\F_{Poi_d}\subset\F_{Bir_d}\subset \F^*_{d,0}.$$ }
\medskip

Note that $\F_{Poi_1}\neq \F_{Bir_1}$ \cite{K}. Though we can not
prove $\F_{Bir_d}=\F^*_{d,0}$, we will show that the two collections
coincide ``dynamically'', i.e. both of them can be used to
characterize higher order almost automorphic points, see \S
\ref{section-8}.

\subsection{Organization of the paper}

We organize the paper as follows: In Section \ref{section-pre}, we
give some basic definitions, and particularly we show the
equivalence of the Problems I, II and III. In Section
\ref{section-nilsystem} we recall basic facts related to nilpotent
Lie groups and nilmanifolds, and study the properties of the metric
on nilpotent matrix Lie groups. In Section \ref{section-GP}, we
introduce the notions related to generalized polynomials and special
generalized polynomials, and give the basic properties. In the next
three sections we show the main results. And in the last section, we
state the applications of our main results, which will appear in a
forthcoming article \cite{HSY} by the same authors.

\subsection{Thanks} We thank Bergelson, Frantzikinakis and Glasner for useful
comments.

\section{Preliminaries}\label{section-pre}

In this section we introduce some basic notions related to dynamical
systems,explain how Bergelson-Host-Kra's result is related to
Problem B-II and show the equivalence of the Problems I, II and III.

\subsection{Measurable and topological dynamics}


A (measurable) {\em system} is a quadruple $(X,\X, \mu, T)$, where
$(X,\X,\mu )$ is a Lebesgue probability space and $T : X \rightarrow
X$ is an invertible measure preserving transformation.

\medskip


A {\em topological dynamical system}, referred to more succinctly as
just a {\em system}, is a pair $(X, T)$, where $X$ is a compact
metric space and $T : X \rightarrow  X$ is a homeomorphism. We use
$\rho(\cdot, \cdot)$ to denote the metric on $X$.

\subsection{Families and filters}
Since many statements of the paper are better stated using the
notion of a family, we now give the definition. See \cite{A} for
more details.

\subsubsection{Furstenberg families}

We say that a collection $\F$ of subsets of $\Z$  is  {\em
a family} if it is hereditary upward, i.e. $F_1 \subseteq F_2$ and
$F_1 \in \F$ imply $F_2 \in \F$. A family $\F$ is called {\em
proper} if it is neither empty nor the entire power set of $\Z$, or,
equivalently if $\Z \in \F$ and $\emptyset \not\in \F$. Any nonempty
collection $\mathcal{A}$ of subsets  of $\Z$ generates a family
$\F(\mathcal{A}) :=\{F \subseteq \Z:F \supset A$ for some $A \in
\mathcal{A}\}$.

For a family $\F$ its {\em dual} is the family $\F^{\ast}
:=\{F\subseteq \Z  : F \cap F' \neq \emptyset \ \text{for  all} \ F'
\in \F \}$. It is not hard to see that $\F^*=\{F\subset\Z:
\Z\setminus F\not\in \F\}$, from which we have that if $\F$ is a
family then $(\F^*)^*=\F.$

\subsubsection{Filter and Ramsey property}

If a family $\F$ is closed under finite intersections and is proper,
then it is called a {\em filter}.

A family $\F$ has the {\em Ramsey property} if $A=A_1\cup A_2\in \F$
then $A_1\in \F$ or $A_2\in \F$. It is well known that a proper family has
the Ramsey property if and only if its dual $\F^*$ is a filter
\cite{F}.


A subset $S$ of $\Z$ is {\it syndetic} if it has a bounded gap, i.e.
there is $N\in \N$ such that $\{i,i+1,\cdots,i+N\} \cap S \neq
\emptyset$ for every $i \in {\Z}$.
A subset $S$ is an {\it $IP$-set}, if there is a subsequence
$\{p_i\}$ of $\mathbb{Z}$ such that
$$S\supset \{p_{i_1}+\ldots+p_{i_n}:i_1<\ldots<i_n,n\in\N\}.$$
It is known that the family of all $IP^*$-sets is a filter and each
$IP^*$-set is syndetic \cite{F}.


The {\it upper Banach density} and {\it lower Banach density} of $S$
are
$$BD^*(S)=\limsup_{|I|\to \infty}\frac{|S\cap I|}{|I|},\ \text{and}\
BD_*(S)=\liminf_{|I|\to \infty}\frac{|S\cap I|}{|I|},$$ where $I$
ranges over intervals of $\mathbb{Z}$, while the {\it upper density}
of $S$ and the {\it lower density} of $S$ are
$$D^*(S)=\limsup_{n\to \infty}\frac{|S\cap [-n,n]|}{2n+1},\ \text{and}\ D_*(S)=\liminf_{n\to \infty}\frac{|S\cap [-n,n]|}{2n+1}.$$
If $D^*(S)=D_*(S)$, then we say the {\it density} of $S$ is
$D(S)=D^*(S)=D_*(S)$.


\subsection{A Bergelson-Host-Kra' Theorem and a consequence}
In this subsection we explain how Bergelson-Host-Kra's result is
related to Problem B-II. First we need some definitions.

\begin{de} Let $k\ge 1$ be an integer and let $X = G/\Gamma$ be a
$d$-step nilmanifold. Let $\phi$ be a continuous real (or complex)
valued function on $X$ and let $a\in G$ and $b\in X$. The sequence
$\{\phi(a^n\cdot b)\}$ is called a basic $d$-step nilsequence. A
$d$-step nilsequence is a uniform limit of basic $d$-step
nilsequences.
\end{de}

For the definition of nilmanifolds see Section
\ref{section-nilsystem}.

\begin{de} Let $\{a_n : n\in \Z\}$ be a bounded sequence. We say
that $a_n$ tends to zero in uniform density, and we write
$\text{UD-Lim}\ a_n = 0,$ if $$\lim_{N\lra+\infty}
\sup_{M\in\Z}\sum_{n=M}^{M+N-1}|a_n| = 0.$$
\end{de}

Equivalently, $\text{UD-Lim}\ a_n = 0$ if and only if for any $\ep>
0,$ the set $\{n\in \Z : |a_n| >\ep\}$ has upper Banach density
zero. Now we state their result.

\begin{thm}[Bergelson-Host-Kra]\cite[Theorem 1.9]{BHK05}
Let $(X,\X,\mu, T )$ be an ergodic system, let $f \in L^\infty(\mu)$
and let $d\ge 1$ be an integer. The sequence $\{I_f(d,n)\}$ is the
sum of a sequence tending to zero in uniform density and a $d$-step
nilsequence, where
\begin{equation}\label{}
  I_f(d,n)=\int f(x)f(T^nx)\ldots f(T^{dn}x)\ d\mu(x).
\end{equation}
\end{thm}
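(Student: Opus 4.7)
The plan is to follow the Host--Kra decomposition strategy. Let $\ZZ_d$ denote the $d$-th Host--Kra factor of $(X,\X,\mu,T)$: it is an inverse limit of $d$-step nilsystems and it is characteristic for the correlation $I_f(d,n)$, in the sense that replacing $f$ by $\E(f\mid\ZZ_d)$ changes $I_f(d,n)$ only by a sequence that is negligible in a suitable uniform sense. I would write $f=g+h$ with $g=\E(f\mid\ZZ_d)$ and $h=f-g$, expand
\[
I_f(d,n)=\int\prod_{i=0}^{d}(g+h)(T^{in}x)\,d\mu(x)
\]
multilinearly into $2^{d+1}$ integrals, and treat separately the $2^{d+1}-1$ mixed terms and the single purely-$g$ term.

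For each mixed term I would invoke the generalized von Neumann inequality proved in \cite{HK05}: for bounded $f_0,\dots,f_d$,
\[
\limsup_{N\to\infty}\sup_{M\in\Z}\frac{1}{N}\Bigl|\sum_{n=M}^{M+N-1}\int\prod_{i=0}^{d} f_i(T^{in}x)\,d\mu\Bigr|\ \le\ C\min_{0\le i\le d}\|f_i\|_{U^{d+1}},
\]
where $\|\cdot\|_{U^{d+1}}$ is the Host--Kra seminorm. Its proof is an iterated van der Corput in $n$ composed with Cauchy--Schwarz in the cubic extension. Since $\ZZ_d$ is exactly the factor on which $\|\cdot\|_{U^{d+1}}$ becomes a norm, we have $\|h\|_{U^{d+1}}=0$, so every mixed term is a UD-null sequence. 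Consequently the only non-negligible contribution to $I_f(d,n)$ is
\[
J(n):=\int\prod_{i=0}^{d} g(T^{in}x)\,d\mu(x).
\]

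To identify $J(n)$ as a $d$-step nilsequence I would $L^2$-approximate $g$ by a function pulled back from a finite level $X_k=G_k/\Gamma_k$ of the tower expressing $\ZZ_d$; call this pulled-back function $\phi$ and let $a\in G_k$ be the element inducing $T$. On $(G_k/\Gamma_k)^{d+1}$ the map $x\mapsto (x,a^nx,\dots,a^{dn}x)$ is the $n$-th iterate of translation by $(1,a,a^{2},\dots,a^{d})$ acting on the diagonal, and its orbit closure $Y$ is a sub-nilmanifold of $(G_k/\Gamma_k)^{d+1}$. Hence
\[
\int\phi(x)\phi(a^nx)\cdots\phi(a^{dn}x)\,dm(x)=\Phi\bigl((1,a,\dots,a^d)^n\cdot y_0\bigr)
\]
for a continuous $\Phi$ on $Y$ and a basepoint $y_0$, so the approximation of $J(n)$ at level $k$ is a basic $d$-step nilsequence. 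The $L^2$-error between $g$ and its level-$k$ approximant feeds back into extra mixed terms, controlled again by the step above; letting $k\to\infty$ and using that a uniform limit of basic $d$-step nilsequences is by definition a $d$-step nilsequence, $J(n)$ is displayed as such a nilsequence, completing the decomposition.

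The main obstacle is running the generalized von Neumann inequality with the supremum over the starting point $M$ sitting outside the Cesàro average, rather than a mere Cesàro limit: a naive ergodic-theoretic van der Corput produces bounds only on $\lim_{N}\frac{1}{N}\sum_{n=0}^{N-1}$, while the target is a uniform-density statement. Handling this requires either running van der Corput on each translate of the averaging interval with uniformity of constants, or, as in \cite{BHK05}, deducing UD-smallness from an $L^2$-statement on the Host--Kra cubic extension $X^{[d+1]}$. A secondary difficulty is that $\ZZ_d$ is only pro-nilpotent, so the final assembly requires the closure-under-uniform-limits clause in the definition of a $d$-step nilsequence rather than an identification on a single nilmanifold.
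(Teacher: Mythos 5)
This theorem is \emph{not proved in the paper}: it is quoted verbatim from \cite[Theorem~1.9]{BHK05} and then used as a black box in Corollary~\ref{d-recurrence}. So there is no in-paper argument to compare against; one can only judge the sketch as a reconstruction of Bergelson--Host--Kra's own argument. At that level the overall plan is the right one, and the choice of $\ZZ_d$ (rather than $\ZZ_{d-1}$) as the factor to project onto is correct: the nilsequence piece has to capture the full $d$-step structure, not just what is visible after Ces\`aro averaging.

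There is, however, a genuine gap in the von~Neumann step. The estimate you write,
\begin{equation*}
\limsup_{N\to\infty}\sup_{M\in\Z}\frac{1}{N}\Bigl|\sum_{n=M}^{M+N-1}\int\prod_{i=0}^{d} f_i(T^{in}x)\,d\mu\Bigr|\ \le\ C\min_{0\le i\le d}\|f_i\|_{U^{d+1}},
\end{equation*}
does not give the conclusion you draw from it. Vanishing of the uniform Ces\`aro means of a sequence $a_n$ is strictly weaker than $\text{UD-Lim}\,a_n=0$, which requires $\limsup_N\sup_M\frac1N\sum_{n=M}^{M+N-1}|a_n|=0$ with the absolute value \emph{inside}; the sequence $a_n=(-1)^n$ already separates the two. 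What one actually needs --- and what the cube space $X^{[d+1]}$ is there to produce --- is an estimate of the form $\limsup_N\sup_M\frac1N\sum_{n=M}^{M+N-1}|I_f(d,n)|^2\le C\,\|f\|_{U^{d+1}}^{c}$, after which UD-nullity follows by Cauchy--Schwarz; the point is that $|I_f(d,n)|^2$ is again a multicorrelation (on a product system), so one more pass of the same Cauchy--Schwarz reaches it, whereas the signed sum never does. You gesture at this in your last paragraph but then state the key inequality in the form that does not close the gap. Incidentally, the $\sup_M$ is the \emph{easier} of the two issues you flag: substituting $n\mapsto n+M$ replaces $f_i$ by $T^{iM}f_i$, and since $\|\cdot\|_\infty$ and the Host--Kra seminorms are $T$-invariant, any Ces\`aro-type bound whose constant depends only on these quantities upgrades to a uniform one for free.

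Finally, a small imprecision in the nilsequence step: $\int\phi(x)\phi(a^nx)\cdots\phi(a^{dn}x)\,dm(x)$ is an integral over the diagonal fibre, not literally the value of a continuous function at a single orbit point. One turns it into such a value by pushing $\phi^{\otimes(d+1)}$ forward to the sub-nilmanifold $Y=\overline{\{(e,a,\ldots,a^d)^n\Delta\}}\subset(G_k/\Gamma_k)^{d+1}$ and averaging along $\Delta$-cosets; since $G_k$ is $d$-step so is $G_k^{d+1}$ and hence $Y$, which is what yields a $d$-step (rather than higher-step) nilsequence. This is a routine fix, but worth stating so that the reader sees where the ``$d$-step'' in the conclusion comes from.
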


Especially, for any $A\in \X$
\begin{equation}\label{noteasy}
\{I_{1_A}(d,n)\}=\{\mu(A\cap T^{-n}A\cap \ldots \cap
T^{-dn}A)\}=F_d+N, \end{equation} where $F_d$ is a $d$-step
nilsequence and $N$ tending to zero in uniform density. Regard $F_d$
as a function $F_d: \Z\rightarrow \C$. By \cite{HM} there is a
$d$-step nilsystem $(Z, S)$, $x_0\in Z$ and a continuous function
$\phi\in C(Z)$ such that
\begin{equation*}
    F_d(n)=\phi(S^nx_0).
\end{equation*}
We claim that $\phi(x_0)>0$ if $\mu(A)>0$. Assume that contrary that
$\phi(x_0)\le 0$. By \cite{FK} or \cite[Theorem 6.15]{BM20} there is
$c>0$ such that
$$\{n\in \Z: \mu(A\cap T^{-n}A\cap \ldots \cap T^{-dn}A)>c\}$$ is an
$IP^*$-set. On the other hand there is a small neighborhood $V$ of
$x_0$ such that $\phi(x)< \frac{1}{2}c$ for each $x\in V$ by the
continuity of $\phi$. It is known that $N(x_0,V)$ is an $IP^*$-set
\cite{F} since $(Z,S)$ is distal \cite[Ch 4, Theorem 3]{AGH} or
\cite{L}. This contradicts to (\ref{noteasy}) by the facts that the
family of $IP^*$-sets is a filter, each $IP^*$-set is syndetic and
$N(n)$ tends to zero in uniform density. That is, we have shown that
$\phi(x_0)>0$ if $\mu(A)>0$.

\medskip
Hence if $\mu(A)>0$ then for each $\ep>0$, $\{n\in\Z:
\phi(S^nx_0)>\phi(x_0)-\frac{1}{2}\ep\}$ is a Nil$_d$-Bohr$_0$ set.
Since $\{n\in\Z: |N(n)|>\frac{1}{2}\ep\}$ has zero upper Banach
density we have the following corollary

\begin{cor}\label{d-recurrence}
Let $(X,\X,\mu, T )$ be an ergodic system and $d\in \N$. Then for
all $A\in \X$ with $\mu(A)>0$ and $\ep>0$, the set $$I=\{n\in \Z:
\mu(A\cap T^{-n}A\cap \ldots \cap T^{-dn}A)>\phi(x_0)-\ep\}$$ is an
almost Nil$_d$ Bohr$_0$-set, i.e. there is some subset $M$ with
$BD^*(M)=0$ such that $I \Delta M$ is a Nil$_d$ Bohr$_0$-set.
\end{cor}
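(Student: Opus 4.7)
The plan is to combine the Bergelson-Host-Kra decomposition with the positivity argument for the nilsequence at $x_0$, which is essentially the content of the paragraphs preceding the statement. First I would apply the quoted theorem of Bergelson-Host-Kra to the indicator $f=1_A$, obtaining the decomposition
\begin{equation*}
\mu(A\cap T^{-n}A\cap \ldots\cap T^{-dn}A) = F_d(n) + N(n),
\end{equation*}
where $F_d$ is a $d$-step nilsequence and $\text{UD-Lim}\,N(n)=0$. Then, invoking the Host-Maass representation of nilsequences, realize $F_d(n)=\phi(S^n x_0)$ for a minimal $d$-step nilsystem $(Z,S)$, a point $x_0\in Z$, and a continuous $\phi\in C(Z)$.

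The central step is to show $\phi(x_0)>0$ whenever $\mu(A)>0$; this has already been argued in the excerpt preceding the corollary, so I would just reproduce that argument. Assume for contradiction that $\phi(x_0)\le 0$. By Furstenberg-Katznelson's IP$^{*}$-recurrence theorem, there is some $c>0$ for which $\{n\in\Z:\mu(A\cap T^{-n}A\cap\ldots\cap T^{-dn}A)>c\}$ is an IP$^{*}$-set. Pick a neighborhood $V$ of $x_0$ small enough that $\phi(x)<c/2$ on $V$; since minimal nilsystems are distal, $N(x_0,V)$ is also IP$^{*}$. The intersection of two IP$^{*}$-sets is IP$^{*}$, hence syndetic, yet on that intersection one has $F_d(n)<c/2$ and $\mu(A\cap\ldots\cap T^{-dn}A)>c$, forcing $|N(n)|>c/2$ on a syndetic set, which contradicts $\text{UD-Lim}\,N(n)=0$.

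Given $\phi(x_0)>0$, the conclusion now follows almost immediately. Fix $\epsilon>0$ and set
\begin{equation*}
U=\{x\in Z:\phi(x)>\phi(x_0)-\tfrac{\epsilon}{2}\},
\end{equation*}
which is an open neighborhood of $x_0$ by continuity of $\phi$, so $B:=N(x_0,U)=\{n\in\Z:F_d(n)>\phi(x_0)-\tfrac{\epsilon}{2}\}$ is a Nil$_d$ Bohr$_0$-set. Let $M=\{n\in\Z:|N(n)|>\tfrac{\epsilon}{2}\}$; by definition of uniform density convergence, $BD^{*}(M)=0$. On $B\setminus M$ one has
\begin{equation*}
\mu(A\cap T^{-n}A\cap\ldots\cap T^{-dn}A) = F_d(n)+N(n) > \phi(x_0)-\epsilon,
\end{equation*}
so $B\setminus M\subset I$ and symmetrically $I\setminus M\subset B$, giving $I\,\Delta\, B\subset M$. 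Hence $I$ coincides with a Nil$_d$ Bohr$_0$-set modulo a set of zero upper Banach density.

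The only real obstacle is the positivity step $\phi(x_0)>0$: without it one cannot produce a nontrivial open neighborhood on which $\phi$ is large. Everything else is formal bookkeeping once the Bergelson-Host-Kra decomposition and the Host-Maass realization theorem are in hand.
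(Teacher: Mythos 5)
Your overall strategy is exactly the paper's: apply the Bergelson--Host--Kra decomposition, realize the nilsequence as $\phi(S^n x_0)$ via Host--Maass, establish $\phi(x_0)>0$ by contradiction using IP$^*$-recurrence and distality, and then compare $I$ with the Nil$_d$ Bohr$_0$-set $B=\{n:F_d(n)>\phi(x_0)-\ep/2\}$ modulo the zero-density set $M'=\{n:|N(n)|>\ep/2\}$. The positivity step and the inclusion $B\setminus M'\subset I$ are both correct.

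However, the final bookkeeping has a real gap: the claim ``symmetrically $I\setminus M'\subset B$'' is false, so ``$I\,\Delta\,B\subset M'$'' does not follow. Indeed, if $n\in I\setminus M'$ then $\mu(A\cap\ldots\cap T^{-dn}A)>\phi(x_0)-\ep$ and $|N(n)|\le\ep/2$, which gives only $F_d(n)>\phi(x_0)-\tfrac{3\ep}{2}$, not $F_d(n)>\phi(x_0)-\tfrac{\ep}{2}$; so $n$ need not lie in $B$. The boundary region where $\mu(A\cap\ldots\cap T^{-dn}A)$ hovers near $\phi(x_0)-\ep$ could have positive density, and on it one cannot control which side of the $B$-threshold $F_d(n)$ lands. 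The fix is to use only the direction you actually proved together with the fact that $\F_{d,0}$ is hereditary upward. From $B\setminus M'\subset I$ we get $B\subset I\cup M'$. Put $M:=M'\setminus I$; then $BD^*(M)=0$, $M\cap I=\emptyset$, so $I\,\Delta\,M=I\cup M=I\cup M'\supset B$. Since $B$ is a Nil$_d$ Bohr$_0$-set and the collection of Nil$_d$ Bohr$_0$-sets is a family (closed upward), $I\,\Delta\,M$ is a Nil$_d$ Bohr$_0$-set, as required. This is precisely the (terser) argument in the paper; the point is that one does not need $I\,\Delta\,M$ to \emph{equal} $B$, only to \emph{contain} it.
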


It follows that problem B-II(2) has a positive answer ignoring a set
with zero density, since for a minimal system $(X,T)$, each
invariant measure of $(X,T)$ is fully supported.

\subsection{Furstenberg correspondence principle}\label{FCP}

Let $\F(\Z)$ denote the collection of finite non-empty subsets of
$\Z$. It is well known that

\begin{thm}[Topological case]\label{topocase}

\begin{enumerate}

\item Let $E\subseteq \Z$ be a syndetic set. Then there
exist a minimal system $(X,T)$ and a non-empty open set $ U\subseteq
X$ such that
\begin{equation*}
\{\a\in \F(\Z): \bigcap_{n\in \a}T^{-n}U\neq \emptyset\}\subseteq
\{\a\in \F(\Z): \bigcap_{n\in \a}(E-n)\neq \emptyset \}.
\end{equation*}

\item For any minimal system $(X,T)$ and any open non-empty set $U$, there is a syndetic
set $E$ such that
\begin{equation*}
\{\a\in \F(\Z): \bigcap_{n\in \a}(E-n)\neq \emptyset \}\subseteq
\{\a\in \F(\Z): \bigcap_{n\in \a}T^{-n}U\neq \emptyset\}.
\end{equation*}

\end{enumerate}

\end{thm}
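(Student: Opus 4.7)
The plan is to use the standard symbolic-dynamics construction for (1) and the minimality-gives-syndeticity property for (2).

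For part (1), I would identify $E$ with its indicator function $1_E \in \{0,1\}^{\Z}$, viewed as a point in the full shift $(\{0,1\}^{\Z}, \sigma)$, where $\sigma$ is the left shift. Let $Y = \overline{\{\sigma^n 1_E : n \in \Z\}}$ be the orbit closure of $1_E$. I would then choose a minimal subsystem $X \subseteq Y$ and pick any $x_0 \in X$. Because $E$ is syndetic, every element of $Y$ (hence of $X$) has a $1$ within any window of bounded length, so in particular $x_0(n_0)=1$ for some $n_0$; replacing $x_0$ by $\sigma^{n_0} x_0$ we may arrange $x_0(0)=1$. Now set $U = X \cap \{x \in \{0,1\}^{\Z} : x(0)=1\}$, which is non-empty and (relatively) open in $X$. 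Since $T^{-n}U = X\cap\{x:x(n)=1\}$, a point $y \in \bigcap_{n\in\alpha} T^{-n}U$ satisfies $y(n)=1$ for every $n\in\alpha$. Because $y\in X\subseteq Y$ there is a sequence $m_k$ with $\sigma^{m_k}1_E\to y$ coordinatewise, and for $k$ large enough $1_E(m_k+n)=1$ for every $n$ in the finite set $\alpha$, i.e.\ $m_k\in\bigcap_{n\in\alpha}(E-n)$.

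For part (2), given a minimal system $(X,T)$ and open non-empty $U\subseteq X$, I would fix any point $x_0\in X$ and set $E = N(x_0,U) = \{n\in\Z: T^n x_0\in U\}$. A classical consequence of minimality (compactness plus density of orbits) gives that $E$ is syndetic. Then if $\bigcap_{n\in\alpha}(E-n)\neq\emptyset$, pick $m$ in this intersection; then $T^{m+n}x_0\in U$ for every $n\in\alpha$, so $T^m x_0\in\bigcap_{n\in\alpha}T^{-n}U$.

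There is no serious obstacle; the only delicate point is the passage to a minimal subsystem in (1) and verifying that the chosen $x_0$ can be shifted so that $x_0(0)=1$, which uses only the syndeticity of $E$ via the observation that no element of the orbit closure can vanish on an interval longer than the maximal gap of $E$. Everything else is a direct unwinding of the definitions of $T^{-n}U$ and $E-n$ together with the coordinatewise convergence in $\{0,1\}^{\Z}$.
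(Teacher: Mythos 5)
Your proof is correct. The paper states Theorem~\ref{topocase} as ``well known'' without giving a proof, so there is nothing to compare against directly; the symbolic-dynamics construction you use (passing to the orbit closure of $1_E$ in $\{0,1\}^{\Z}$, extracting a minimal subsystem, and taking $U$ to be the cylinder $\{x:x(0)=1\}$ for part (1), and using syndeticity of return times $N(x_0,U)$ in a minimal system for part (2)) is exactly the standard argument for the topological Furstenberg correspondence, and all the details you check (syndeticity passing to the orbit closure so that a coordinate can be normalized to $1$, shift-invariance of the minimal subsystem so the normalization stays inside $X$, and coordinatewise convergence yielding an eventual match on the finite set $\alpha$) are the right ones.
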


\begin{thm}[Measurable case]
\begin{enumerate}
  \item Let $E\subseteq \Z$ with $BD^*(E)>0$. Then there
  exists a measurable system $(X,\X,\mu,T)$ and $ A\in \X$ with $\mu(A)=BD^*(E)$ such that
  for all $\a\in \F(\Z)$
  \begin{equation*}
    BD^*(\bigcap_{n\in \a}(E-n))\ge \mu(\bigcap_{n\in \a}T^{-n}A).
  \end{equation*}

  \item Let $(X,\X,\mu,T)$ be a measurable system and $ A\in \X$ with $\mu(A)>0$.
  There is a set $E$ with $D^*(E)\ge \mu(A)$ such that
  \begin{equation*}
    \{\a\in \F(\Z): \bigcap_{n\in \a}(E-n)\neq \emptyset
    \}\subseteq \{\a\in \F(\Z): \mu(\bigcap_{n\in \a}T^{-n}U
    )>0\}.
  \end{equation*}

\end{enumerate}

\end{thm}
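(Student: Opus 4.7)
The plan is to prove the two parts by the standard symbolic/generic-point variants of the Furstenberg correspondence principle, going in opposite directions.

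For (1), I would build a shift-space model carrying the combinatorial data of $E$ and extract a $T$-invariant measure by averaging along intervals that witness the upper Banach density. Concretely, take $X=\{0,1\}^{\Z}$ with the left shift $T$, let $x_E\in X$ be the indicator of $E$, and set $A=\{y\in X:y(0)=1\}$, a clopen cylinder. Choose intervals $I_k\subset\Z$ with $|I_k|\to\infty$ and $|E\cap I_k|/|I_k|\to BD^*(E)$, and form the empirical measures $\nu_k=|I_k|^{-1}\sum_{n\in I_k}\delta_{T^n x_E}$. By weak-$*$ compactness of the space of probability measures on $X$, pass to a subsequential limit $\mu$, which is $T$-invariant by the usual telescoping estimate. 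For any $\a\in\F(\Z)$ the cylinder $B_\a:=\bigcap_{n\in\a}T^{-n}A=\{y:y(n)=1\text{ for all }n\in\a\}$ is clopen, hence a continuity set, and $\nu_k(B_\a)=|I_k|^{-1}\bigl|I_k\cap \bigcap_{n\in\a}(E-n)\bigr|$. Letting $k\to\infty$ yields $\mu(B_\a)\le BD^*(\bigcap_{n\in\a}(E-n))$, and in particular $\mu(A)=BD^*(E)$.

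For (2), I would reverse the procedure: a sufficiently generic orbit in the given system produces the combinatorial set. By Birkhoff's pointwise theorem applied to $1_A$, the two-sided averages $(2N+1)^{-1}\sum_{n=-N}^{N}1_A(T^n x)$ converge $\mu$-a.e.\ to $g(x)=\E(1_A\mid\I)(x)$, and $\int g\,d\mu=\mu(A)$. Since $\F(\Z)$ is countable, the set $\NN:=\bigcup\{B_\a:\a\in\F(\Z),\ \mu(B_\a)=0\}$ is $\mu$-null, and so is $\bigcup_{k\in\Z}T^{-k}\NN$ by invariance of $\mu$. Pick a single $x_0$ lying in the full-measure set where Birkhoff convergence holds, $g(x_0)\ge\mu(A)$ (such a point exists because $\int g\,d\mu=\mu(A)$), and $T^k x_0\notin\NN$ for every $k\in\Z$. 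Define $E:=\{n\in\Z:T^n x_0\in A\}$; then $D(E)$ exists and equals $g(x_0)$, so $D^*(E)\ge\mu(A)$. For the claimed inclusion, observe that $k\in\bigcap_{n\in\a}(E-n)$ is equivalent to $T^k x_0\in B_\a$; thus non-emptiness of $\bigcap_{n\in\a}(E-n)$ forces some $T^k x_0\in B_\a$, and our choice of $x_0$ rules this out whenever $\mu(B_\a)=0$, so $\mu(B_\a)>0$.

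The main difficulty I anticipate is not the weak-$*$ limit in (1), which is routine once one passes to a suitable subsequence on which both the density along $I_k$ and the empirical measures converge. It is rather the simultaneous control in (2): forcing $g(x_0)\ge\mu(A)$ while also preventing the entire two-sided orbit of $x_0$ from ever entering a $\mu$-null $B_\a$. The countability of $\F(\Z)$ together with the identity $\mu(T^{-k}B_\a)=\mu(B_\a)$ makes the two demands compatible -- without these two observations the naive plan of ``avoiding all null sets'' is hopeless.
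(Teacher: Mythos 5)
Your proof is correct and is exactly the standard Furstenberg correspondence argument that the paper invokes when it presents this theorem as well known: for (1) the symbolic model $\{0,1\}^{\Z}$ with a weak-$*$ limit of empirical measures along density-witnessing intervals, and for (2) a generic point chosen via Birkhoff's theorem whose orbit avoids the countably many null cylinder sets $B_\a$. (Note also that the ``$U$'' appearing in the paper's statement of part (2) is a typo for ``$A$'', which you correctly read through.)
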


\subsection{Equivalence} In this subsection we explain why Problems
B-I,II,III are equivalent. Let $\F$ be the family generated by all
sets of forms $\{n\in\Z: U\cap T^{-n}U\cap \ldots \cap
T^{-dn}U\not=\emptyset\},$ with $(X,T)$ a minimal system, $U$ a
non-empty open subset of $X$. Then it is clear from the definition
that
$$\F_{Bir_d}=\F^*.$$
\begin{prop} For any $d\in\N$ the following statements are equivalent.
\begin{enumerate}
\item For any Nil$_d$ Bohr$_0$-set $A$, there is a syndetic subset $S$ of $\Z$ with
$A\supset\{n\in \Z: S\cap (S-n)\cap\ldots\cap (S-dn)\neq
\emptyset\}$.
\item  For any Nil$_d$ Bohr$_0$-set $A$, there are a minimal system $(X,T)$ and a
non-empty open subset $U$ of $X$ with $A\supset \{n\in\Z: U\cap
T^{-n}U\cap \ldots \cap T^{-dn}U\not=\emptyset\}.$
\item $\F_{Bir_d}\subset \F^*_{d,0}$.
\end{enumerate}
\end{prop}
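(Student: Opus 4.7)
The plan is to split the three-way equivalence into $(1)\Leftrightarrow(2)$, to be proved via the Furstenberg correspondence principle, and $(2)\Leftrightarrow(3)$, to be proved by a formal duality of families. Both halves turn out to be essentially bookkeeping once the correct specialization is made.

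For $(1)\Leftrightarrow(2)$, the key observation is that the ``return-time'' sets appearing in the statements are extracted from Theorem~\ref{topocase} by specializing the finite set $\a\in\F(\Z)$ to the arithmetic progression $\a=\{0,n,2n,\ldots,dn\}$. Under this choice $\bigcap_{k\in\a}T^{-k}U$ becomes $U\cap T^{-n}U\cap\cdots\cap T^{-dn}U$ and $\bigcap_{k\in\a}(S-k)$ becomes $S\cap(S-n)\cap\cdots\cap(S-dn)$. Assuming (1), given a Nil$_d$ Bohr$_0$-set $A$ fix a syndetic $S$ with $A\supset\{n\in\Z:S\cap(S-n)\cap\cdots\cap(S-dn)\ne\emptyset\}$ and apply Theorem~\ref{topocase}(1) to $S$; restricting the resulting inclusion of families of finite sets to arithmetic progressions of length $d+1$ yields the minimal system and the open set required by (2). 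The converse $(2)\Rightarrow(1)$ is entirely symmetric, using Theorem~\ref{topocase}(2) in place of (1).

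For $(2)\Leftrightarrow(3)$, I would observe that, by the very definition of a family generated by a collection, (2) says precisely $\F_{d,0}\subset\F$, where $\F$ is the family introduced just before the proposition, generated by the sets $\{n\in\Z: U\cap T^{-n}U\cap\cdots\cap T^{-dn}U\ne\emptyset\}$ over all minimal $(X,T)$ and all non-empty open $U\subset X$. Since the paper has set $\F_{Bir_d}=\F^*$, and since the involution $(\mathcal{G}^*)^*=\mathcal{G}$ on families is inclusion-reversing, one obtains
\[
\F_{d,0}\subset\F \;\iff\; \F^*\subset\F^*_{d,0} \;\iff\; \F_{Bir_d}\subset\F^*_{d,0},
\]
which is exactly (3). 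No serious obstacle is expected here: the only non-trivial input is Theorem~\ref{topocase}, and the remainder reduces to formal duality for Furstenberg families.
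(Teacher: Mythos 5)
Your proposal is correct and follows essentially the same route as the paper's own proof: $(1)\Leftrightarrow(2)$ by specializing the Furstenberg correspondence principle (Theorem~\ref{topocase}) to arithmetic progressions $\{0,n,\ldots,dn\}$, and $(2)\Leftrightarrow(3)$ by the order-reversing involution $\mathcal G\mapsto\mathcal G^*$ on families together with $\F_{Bir_d}=\F^*$. The only difference is cosmetic: the paper handles $(2)\Rightarrow(3)$ as ``by definition'' and $(3)\Rightarrow(2)$ by the same duality computation you spell out once for both directions.
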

\begin{proof} Let $d\in\N$ be fixed. (1)$\Rightarrow$(2). Let $A$ be a Nil$_d$
Bohr$_0$-set, then there is a syndetic subset $S$ of $\Z$ with
$A\supset\{n\in \Z: S\cap (S-n)\cap\ldots\cap (S-dn)\neq
\emptyset\}$. For such $S$ using Theorem \ref{topocase}, we get that
there exist a minimal system $(X,T)$ and a non-empty open set $
U\subseteq X$ such that $\{n\in \Z: S\cap (S-n)\cap\ldots\cap
(S-dn)\neq \emptyset\}\supset\{n\in\Z: U\cap T^{-n}U\cap \ldots \cap
T^{-dn}U\not=\emptyset\}.$ Thus $A\supset\{n\in\Z: U\cap T^{-n}U\cap
\ldots \cap T^{-dn}U\not=\emptyset\}.$ (2)$\Rightarrow$(1) follows
similarly by the above argument. (2)$\Rightarrow$(3) follows by the
definition. (3)$\Rightarrow$(2). Since $\F_{Bir_d}\subset
\F^*_{d,0}$ and $\F_{Bir_d}=\F^*$, we have that $\F^*\subset
\F_{d,0}^*$ which implies that $\F\supset \F_{d,0}$.
\end{proof}

\begin{prop} For any $d\in\N$ the following statements are equivalent.
\begin{enumerate}
\item For any syndetic set $S$, $\{n\in \Z:
S\cap (S-n)\cap\ldots\cap (S-dn)\neq \emptyset\}$ is a Nil$_d$
Bohr$_0$-set.

\item For any minimal system $(X,T)$, and any open non-empty $U\subset
X$, $\{n\in\Z: U\cap T^{-n}U\cap \ldots \cap
T^{-dn}U\not=\emptyset\}$ is a Nil$_d$ Bohr$_0$-set.

\item $\F_{Bir_d}\supset \F^*_{d,0}$.
\end{enumerate}
\end{prop}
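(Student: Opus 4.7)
The plan is to mirror the structure of the proof of the preceding proposition, exchanging the roles of (1) and (2), and using Theorem \ref{topocase} in both directions. The two main conceptual ingredients are the Furstenberg correspondence principle (topological case) and the elementary duality of families.

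First I would prove (1)$\Rightarrow$(2). Given a minimal system $(X,T)$ and a non-empty open $U \subset X$, apply Theorem \ref{topocase}(2) to obtain a syndetic set $S \subset \Z$ such that
$$\{n \in \Z: U \cap T^{-n}U \cap \ldots \cap T^{-dn}U \neq \emptyset\} \subseteq \{n \in \Z: S \cap (S-n) \cap \ldots \cap (S-dn) \neq \emptyset\},$$
by specializing the inclusion of the theorem to the finite set $\a=\{0,n,2n,\ldots,dn\}$. By (1) the right-hand side is a Nil$_d$ Bohr$_0$-set, and since $\F_{d,0}$ is hereditary upward, so is the left-hand side. The converse (2)$\Rightarrow$(1) is symmetric: starting from a syndetic $S$, invoke Theorem \ref{topocase}(1) to produce $(X,T)$ and a non-empty open $U$ with the reverse inclusion, then use (2) and hereditariness of $\F_{d,0}$.

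Next I would handle the equivalence with (3). Recall from the discussion preceding the statement that $\F_{Bir_d} = \F^*$, where $\F$ is the family generated by sets of the form $\{n\in\Z: U \cap T^{-n}U \cap \ldots \cap T^{-dn}U \neq \emptyset\}$ over all minimal $(X,T)$ and non-empty open $U$. By the duality $\F \subset \mathcal{G} \Leftrightarrow \mathcal{G}^* \subset \F^*$ combined with $(\F_{d,0}^*)^* = \F_{d,0}$, the inclusion $\F_{Bir_d} \supset \F_{d,0}^*$ is equivalent to $\F \subset \F_{d,0}$. Because $\F_{d,0}$ is a family and contains a generated family iff it contains each generator, this last inclusion is precisely statement (2). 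This gives (2)$\Leftrightarrow$(3).

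There is no hard step here; everything is a formal consequence of the Furstenberg correspondence and the definitions. The only point to be careful about is that both directions of Theorem \ref{topocase} must be used (unlike the previous proposition, which only needs one direction), and that the hereditary-upward property of $\F_{d,0}$ is what allows us to pass from the explicit syndetic set produced by the correspondence to any set containing the corresponding intersection set.
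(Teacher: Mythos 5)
Your overall strategy is right, but the (1)$\Rightarrow$(2) step as written contains a genuine error in the direction of the inclusion obtained from Theorem \ref{topocase}(2). That theorem gives, for a minimal $(X,T)$ and non-empty open $U$, a syndetic $S$ with
$$\{\a\in\F(\Z): \bigcap_{n\in\a}(S-n)\neq\emptyset\}\subseteq\{\a\in\F(\Z): \bigcap_{n\in\a}T^{-n}U\neq\emptyset\};$$
specializing to $\a=\{0,n,\ldots,dn\}$ yields
$$\{n\in\Z: S\cap(S-n)\cap\ldots\cap(S-dn)\neq\emptyset\}\subseteq\{n\in\Z: U\cap T^{-n}U\cap\ldots\cap T^{-dn}U\neq\emptyset\},$$
which is the \emph{opposite} of the inclusion you wrote. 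It appears you carried over the inclusion direction from the preceding proposition, which invokes part (1) of the correspondence theorem rather than part (2). With your reversed inclusion, the subsequent hereditary-upward step also collapses: $\F_{d,0}$ is only closed under passing to \emph{supersets}, so knowing that a superset of $\{n: U\cap T^{-n}U\cap\ldots\cap T^{-dn}U\neq\emptyset\}$ lies in $\F_{d,0}$ gives nothing. Once the inclusion is corrected, the argument runs as intended: by (1) the set indexed by $S$ is a Nil$_d$ Bohr$_0$-set, and since it is contained in the set indexed by $U$, the latter belongs to $\F_{d,0}$. The (2)$\Rightarrow$(1) direction using part (1) of the correspondence theorem works similarly, but your description of it (``the reverse inclusion'') inherits the same ambiguity and should be spelled out.

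Your treatment of (2)$\Leftrightarrow$(3) is correct and actually a touch cleaner than the paper's: for (3)$\Rightarrow$(2) you use only the duality $\F^*\supset\F_{d,0}^*\Leftrightarrow\F\subset\F_{d,0}$ (together with $(\F^*)^*=\F$), whereas the paper also invokes Corollary D to obtain the equality $\F=\F_{d,0}$, which is more than needed for this implication.
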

\begin{proof} Let $d\in\N$ be fixed. (1)$\Rightarrow$(2). Let
$(X,T)$ be a minimal system and $U$ be a non-empty open set of $X$.
By Theorem \ref{topocase}, there is a syndetic set $S$ such that
$$\{n\in
\Z: S\cap (S-n)\cap\ldots\cap (S-dn)\neq \emptyset\}\subset\{n\in\Z:
U\cap T^{-n}U\cap \ldots \cap T^{-dn}U\not=\emptyset\}.$$ By (1),
$\{n\in \Z: S\cap (S-n)\cap\ldots\cap (S-dn)\neq \emptyset\}$ is a
Nil$_d$ Bohr$_0$-set, and so is $\{n\in\Z: U\cap T^{-n}U\cap \ldots
\cap T^{-dn}U\not=\emptyset\}.$ Similarly, we have
(2)$\Rightarrow$(1). (2)$\Rightarrow$(3) follows by the definition.
(3)$\Rightarrow$(2). By Corollary D we have $\F_{d,0}^*=\F^*$, i.e.
$\F_{d,0}=\F$.
\end{proof}




\subsection{Other related problems} We remark that similar
problems can be formulated replacing syndetic sets by sets with
positive Banach density, minimal systems by ergodic systems and open
non-empty sets by positive measurable sets.

\section{Nilsystems}\label{section-nilsystem}

In this section we recall some basic facts concerning nilpotent Lie
groups and nilmanifolds. Since in the proofs of our main results we
need to use the metric in the nilpotent matrix Lie group, we state
some its basic properties. Note that we follow Green and Tao
\cite{GT} to define such a metric.

\subsection{Nilmanifolds and nilsystems}

\subsubsection{Nilpotent groups}

Let $G$ be a group. For $g, h\in G$, we write $[g, h] =
ghg^{-1}h^{-1}$ for the commutator of $g$ and $h$ and we write
$[A,B]$ for the subgroup spanned by $\{[a, b] : a \in A, b\in B\}$.
The commutator subgroups $G_j$, $j\ge 1$, are defined inductively by
setting $G_1 = G$ and $G_{j+1} = [G_j ,G]$. Let $d \ge 1$ be an
integer. We say that $G$ is {\em $d$-step nilpotent} if $G_{d+1}$ is
the trivial subgroup.

\subsubsection{Nilmanifolds}

Let $G$ be a $d$-step nilpotent Lie group and $\Gamma$ a discrete
cocompact subgroup of $G$, i.e. a uniform subgroup of $G$. The
compact manifold $X = G/\Gamma$ is called a {\em $d$-step
nilmanifold}. The group $G$ acts on $X$ by left translations and we
write this action as $(g, x)\mapsto gx$. The Haar measure $\mu$ of
$X$ is the unique probability measure on $X$ invariant under this
action. Let $\tau\in G$ and $T$ be the transformation $x\mapsto \tau
x$ of $X$, i.e the nilrotation induced by $\tau\in G$. Then $(X, T,
\mu)$ is called a basic {\em $d$-step nilsystem}. See \cite{CG,M}
for the details.

\subsubsection{$d$-step nilsystem and system of order $d$}

We also make use of inverse limits of nilsystems and so we recall
the definition of an inverse limit of systems (restricting ourselves
to the case of sequential inverse limits). If $(X_i,T_i)_{i\in \N}$
are systems with $diam(X_i)\le M<\infty$ and $\phi_i:
X_{i+1}\rightarrow X_i$ are factor maps, the {\em inverse limit} of
the systems is defined to be the compact subset of $\prod_{i\in
\N}X_i$ given by $\{ (x_i)_{i\in \N }: \phi_i(x_{i+1}) = x_i,
i\in\N\}$, which is denoted by $\displaystyle
\lim_{\longleftarrow}\{X_i\}_{i\in \N}$. It is a compact metric
space endowed with the distance $\rho(x, y) =\sum_{i\in \N} 1/2^i
\rho_i(x_i, y_i )$. We note that the maps $\{T_i\}$ induce a
transformation $T$ on the inverse limit.

\begin{de}\label{de-nilsystem} [Host-Kra-Maass] \cite{HKM}
A system $(X,T)$ is called a {\em $d$-step nilsystem}, if it is an inverse limit of basic
$d$-step nilsystems. A system $(X,T)$ is called
a {\em system of order $d$}, if  it is a minimal $d$-step nilsystem, equivalently it is an inverse limit of basic
$d$-step minimal nilsystems.
\end{de}

Recall that a subset $A\subseteq \Z$ is a {\em Nil$_d$ Bohr$_0$-set}
if there exist a $d$-step nilsystem $(X,T)$, $x_0\in X$ and an open
set $U\subseteq X$ containing $x_0$ such that $N(x_0,U)$ is
contained in $A$. As each basic $d$-step nilsystem is distal, so is
a $d$-step nilsystem. Hence by Definition \ref{de-nilsystem}, it is
not hard to see that a subset $A\subseteq \Z$ is a {\em Nil$_d$
Bohr$_0$-set} if and only if there exist a basic $d$-step (minimal)
nilsystem $(X,T)$ (or a {\em system $(X,T)$ of order $d$}), $x_0\in
X$ and an open set $U\subseteq X$ containing $x_0$ such that
$N(x_0,U)$ is contained in $A$. Note that here we need the facts
that the product of finitely many of $d$-step nilmanifolds is a
$d$-step nilmanifold,
and the orbit closure of any point in a basic $d$-step nilsystem is
a $d$-step nilmanifold \cite[Theorem 2.21]{L}.

\subsection{Reduction}\label{reduction}
Let $X=G/\Gamma$ be a nilmanifold. Then there exists a connected,
simply connected nilpotent Lie group $\widehat{G}$ and
$\widehat{\Gamma}\subseteq \widehat{G}$ a co-compact subgroup such
that $X$ with the action of $G$ is isomorphic to a submanifold
$\widetilde{X}$ of $\widehat{X}=\widehat{G}/\widehat{\Gamma}$
representing the action of $G$ in $\widehat{G}$. See \cite{L} for
more details.

Thus a subset $A\subseteq \Z$ is a {\em Nil$_d$ Bohr$_0$-set} if and only if
there exist a basic $d$-step nilsystem $(G/\Gamma,T)$ with $G$ is a connected,
simply connected nilpotent Lie group and
$\Gamma$ a co-compact subgroup of $G$, $x_0\in X$ and an open set
$U\subseteq X$ containing $x_0$ such that $N(x_0,U)$ is contained in $A$.

\subsection{Nilpotent Lie group and Mal'cev basis}

\subsubsection{}

We will make use of the Lie algebra $\mathfrak{g}$ of a $d$-step
nilpotent Lie group $G$ together with the exponential map $\exp: \g
\lra G$. When $G$ is a connected, simply-connected $d$-step
nilpotent Lie group the exponential map is a diffeomorphism
\cite{CG, M}. In particular, we have a logarithm map $\log: G\lra
\g$. Let $$\exp(X*Y)=\exp(X)\exp(Y), \ X,Y\in \g.$$

\subsubsection{Campbell-Baker-Hausdorff formula}

The following Campbell-Baker-Hausdorff formula (CBH formula) will be
used frequently
\begin{equation*}
\begin{aligned}
X*Y&=\sum_{n>0}\frac{(-1)^{n+1}}{n}\sum_{p_i+q_i>0,1\le i\le
n}\frac{(\sum_{i=1}^n(p_i+q_i))^{-1}}{p_1!q_1!\ldots p_n!q_n!}\\
&\times (\ad X)^{p_1}(\ad Y)^{q_1}\ldots (\ad X)^{p_n}(\ad
Y)^{q_n-1}Y,
\end{aligned}
\end{equation*}
where $(\ad X)Y=[X,Y]$. (If $q_n=0$, the term in the sum is $\ldots
(\ad X)^{p_n-1}X$; of course if $q_n>1$, or if $q_n=0$ and $p_n>1$,
then the term if zero.) The low order nonzero terms are well known,
\begin{equation*}
\begin{aligned}
X*Y=& X+Y+\frac{1}{2}[X,Y]+\frac{1}{12}[X,[X,Y]]-\frac{1}{12}[Y,[X,Y]]\\
&-\frac{1}{48}[Y,[X,[X,Y]]]-\frac{1}{48}[X,[Y,[X,Y]]]\\
&+ (\text{ commutators in five or more terms}).
\end{aligned}
\end{equation*}

\subsubsection{}

We assume $\mathfrak{g}$ is the Lie algebra of $G$ over $\R$, and
$\exp: \g \lra G$ is the exponential map. The descending central
series of $\g$ is defined inductively by
\begin{equation*}
   \g^{(1)}=\g; \ \g^{(n+1)}=[\g,\g^{(n)}]={\rm span}\{[X,Y]:
   X\in \g, Y\in \g^{(n)}\}.
\end{equation*}
Since $\g$ is a $d$-step nilpotent Lie algebra, we have
$$\g=\g^{(1)}\supsetneq\g^{(2)}\supsetneq \ldots\supsetneq \g^{(d)} \supsetneq\g^{(d+1)}=\{0\}.$$
We note that $$[\g^{(i)},\g^{(j)}]\subset \g^{(i+j)}, \forall i,
j\in \N.$$ In particular, each $\g^{(k)}$ is an ideal in $\g$.

\subsubsection{Mal'cev Base}

\begin{de}(Mal'cev base)
Let $G/\Gamma$ be an $m$-dimensional nilmanifold (i.e. $G$ is a
$d$-step nilpotent Lie group and $\Gamma$ is a discrete uniform
subgroup of $G$) and let $G=G_1\supset \ldots\supset G_d\supset
G_{d+1}=\{e\}$ be the lower central series filtration. A basis
$\mathcal {X}= \{X_1, \ldots ,X_m\}$ for the Lie algebra $\g$ over
$\R$ is called a {\em Mal'cev basis} for $G/\Gamma$ if the following
four conditions are satisfied:

\begin{enumerate}
\item  For each $j = 0,\ldots,m-1$ the subspace $\eta_j := \text{Span}(X_{j+1}, \ldots
,X_m)$ is a Lie algebra ideal in $\g$, and hence $H_j := \exp\
\eta_j$ is a normal Lie subgroup of $G$.

\item For every $0< i<d$ we have $G_i = H_{l_{i-1}+1}$. Thus
$0=l_0<l_1<\ldots<l_{d-1}\le m-2$.

\item Each $g\in G$ can be written uniquely as
$\exp(t_1X_1) \exp(t_2X_2)\ldots \exp(t_mX_m)$, for $t_i\in \R$.

\item $\Gamma$ consists precisely of those elements which, when written in
the above form, have all $t_i\in \Z$.
\end{enumerate}
\end{de}
Note that such a basis exists when $G$ is a connected, simply
connected $d$-step nilpotent Lie group \cite{CG, GT, M}.

\subsection{Metrics on nilmanifolds}
For a connected, simply connected $d$-step nilpotent Lie group $G$,
we can use a Mal'cev basis $X$ to put a metric structure on $G$ and
on $G/\Gamma$.

\begin{de}[Metrics on G and $G/\Gamma$]\cite{GT}\label{gtao}
Let $G/\Gamma$ be a nilmanifold with a Mal'cev basis $\mathcal X$,
where $G$ is a connected, simply connected Lie group and $\Gamma$ is
a discrete uniform subgroup of $G$. Let $\phi:G\lra \R^m$ with
$$g=\exp(t_1X_1)\ldots\exp(t_mX_m)\mapsto (t_1,\ldots,t_m).$$ We
define $\rho = \rho_X: G \times G\lra \R$ to be the largest metric
such that $\rho(x, y)\le |\phi(xy^{-1})|$ for all $x, y\in G$, where
$|\cdot|$ denotes the $\ell^\infty$-norm on $\R^m$. More explicitly,
we have
$$\rho (x, y) = \inf\Big\{\sum_{i=1}^n\min\{|\phi(x_{i-1}x_i^{-1})|,
|\phi(x_ix_{i-1}^{-1})|\}: x_0,\ldots, x_n\in G;
x_0=x,x_n=y\Big\}.$$ This descends to a metric on $G/\Gamma$ by
setting
$$\rho (x\Gamma,y\Gamma):=\inf\{d(x', y'): x', y'\in G; x'=x(\text{mod}\ \Gamma);
y'=y(\text{mod}\ \Gamma)\}.$$ It turns out that this is indeed a
metric on $G/\Gamma$, see \cite{GT}. Since $\rho$ is
right-invariant, we also have
$$\rho (x\Gamma, y\Gamma) = \inf_{\gamma\in \Gamma}\rho (x, y\gamma).$$
\end{de}

\subsection{Base points}
The following proposition should be well known.

\begin{prop}\label{replace}
Let $X=G/\Gamma$ be a nilmanifold, $T$ be a nilrotation induced by
$a\in G$. Let $x\in G$ and $U$ be an open neighborhood $x\Gamma$ in
$X$. Then there are a uniform subgroup $\Gamma_x\subset G$ and an
open neighborhood $V\subset G/\Gamma_x$ of $e\Gamma_x$ such that
$$N_T(x\Gamma,U)=N_{T'}(e\Gamma_x,V),$$
where $T'$ is a nilrotation induced by $a\in G$ in $X'=G/\Gamma_x$.
\end{prop}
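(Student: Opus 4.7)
The idea is the standard conjugation trick: move the base point to the identity by replacing $\Gamma$ with a conjugate uniform subgroup.

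First I would set $\Gamma_x := x\Gamma x^{-1}$. Since $x\in G$ and $\Gamma$ is a discrete cocompact subgroup of $G$, conjugation by $x$ is a homeomorphism of $G$, so $\Gamma_x$ is again discrete. To see that it is cocompact I would exhibit an explicit homeomorphism $\phi\colon G/\Gamma\to G/\Gamma_x$; cocompactness of $\Gamma_x$ then follows from compactness of $G/\Gamma$.

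Next I would define $\phi\colon G/\Gamma\to G/\Gamma_x$ by
\[
\phi(g\Gamma)\;=\;gx^{-1}\Gamma_x .
\]
A short check shows that $\phi$ is well defined: $g\Gamma=g'\Gamma$ iff $g^{-1}g'\in\Gamma$, and this happens iff $(gx^{-1})^{-1}(g'x^{-1})=x(g^{-1}g')x^{-1}\in x\Gamma x^{-1}=\Gamma_x$. The inverse $h\Gamma_x\mapsto hx\Gamma$ is defined analogously, and both maps are continuous because they are induced by continuous right translations on $G$. Thus $\phi$ is a homeomorphism, and in particular $\phi(x\Gamma)=e\Gamma_x$.

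Now let $T'\colon G/\Gamma_x\to G/\Gamma_x$ be the nilrotation induced by the same element $a\in G$, i.e.\ $T'(h\Gamma_x)=ah\Gamma_x$. Then
\[
\phi(T(g\Gamma))\;=\;\phi(ag\Gamma)\;=\;agx^{-1}\Gamma_x\;=\;T'(gx^{-1}\Gamma_x)\;=\;T'(\phi(g\Gamma)),
\]
so $\phi$ conjugates $T$ to $T'$. Finally I would set $V:=\phi(U)$, which is an open neighborhood of $e\Gamma_x=\phi(x\Gamma)$ in $G/\Gamma_x$. Because $\phi$ is a conjugacy,
\[
N_T(x\Gamma,U)=\{n:T^n(x\Gamma)\in U\}=\{n:(T')^n(e\Gamma_x)\in V\}=N_{T'}(e\Gamma_x,V),
\]
which is the desired equality.

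There is no real obstacle here; the statement is essentially the observation that a nilmanifold $G/\Gamma$ with a distinguished base point $x\Gamma$ is isomorphic, as a $G$-space, to $G/\Gamma_x$ with base point $e\Gamma_x$, via the intertwining map $\phi$. The only mild point of care is to keep the conjugation on the correct side: because the action is by \emph{left} translation one must conjugate $\Gamma$ by $x$ (not by $x^{-1}$) and use the right-translation map $g\mapsto gx^{-1}$ to pass between the two quotients.
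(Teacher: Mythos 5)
Your proof is correct and uses essentially the same approach as the paper: set $\Gamma_x = x\Gamma x^{-1}$, and take $V$ to be the image of $U$ under the right-translation map $g\Gamma\mapsto gx^{-1}\Gamma_x$ (the paper writes this as $V=Ux^{-1}$). You simply make the intertwining homeomorphism $\phi$ explicit and verify the conjugacy, which the paper leaves implicit.
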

\begin{proof} Let $\Gamma_x=x\Gamma x^{-1}$. Then $\Gamma_x$ is also a uniform subgroup of $G$.

Put $V=Ux^{-1}$, where we view $U$ as the collections of equivalence
classes. It is easy to see that $V\subset G/\Gamma_x$ is open, which
contains $e\Gamma_x$. Let $n\in N_T(x\Gamma,U)$ then $a^nx\Gamma\in
U$ which implies that $a^nx\Gamma x^{-1}\in Ux^{-1}=V$, i.e. $n\in
N_{T'}(e\Gamma_x,V)$. The other direction follows similarly.
\end{proof}

\subsection{Nilpotent Matrix Lie Group}

\subsubsection{}

Let $M_{d+1}(\mathbb{R})$ denote the space of all $(d+1)\times
(d+1)$-matrices with real entries. For $A=(A_{ij})_{1\le i,j\le
d+1}\in M_{d+1}(\mathbb{R})$, we define
\begin{equation}\label{Mat-eq-1}
\|A\|=\left( \sum \limits_{i,j=1}^{d+1}
|A_{ij}|^2\right)^{\frac{1}{2}}.
\end{equation}
Then $\|\cdot\|$ is a norm on $M_{d+1}(\mathbb{R})$ and the norm
satisfies the inequalities
\begin{align*}
\|A+B\|\le \|A\|+\|B\| \text{ and } \|AB\|\le \|A\| \|B\|
\end{align*}
for $A,B\in M_{d+1}(\mathbb{R})$.

\subsubsection{}

Let ${\bf a}=(a_i^k)_{1\le k\le d, 1\le i\le d-k+1}\in
\mathbb{R}^{d(d+1)/2}$. Then corresponding to ${\bf a}$ we define
$\M({\bf a})$ with
{\footnotesize
$$
\M({\bf a})=\left(
  \begin{array}{cccccccc}
    1 & a_1^1 & a_1^2 & a_1^3  &\ldots & a_1^{d-1} &a_1^d \\
    0 & 1     & a_2^1 & a_2^2   &\ldots &  a_2^{d-2}&a_2^{d-1}\\
    0 & 0     &1      & a_3^1   &\ldots &  a_3^{d-3} &a_3^{d-2}\\
   \vdots & \vdots & \vdots &  \vdots & \vdots & \vdots &\vdots\\
    0 & 0    &0       &0           & \ldots & a_{d-1}^1 &a_{d-1}^2 \\
    0 & 0    &0       &0            & \ldots & 1 &a_{d}^1 \\
    0 & 0    &0       &0            & \ldots & 0  & 1
  \end{array}
\right).
$$}

\subsubsection{}

Let $\mathbb{G}_d$ be the (full) upper triangular group
$$\G_d=\{\M({\bf a}):a_i^k\in \mathbb{R}, 1\le k\le d, 1\le i\le d-k+1\}.$$
The group $\G_d$ is a $d$-step nilpotent group, and it is clear that
for $A\in \G_d$ there exists a unique ${\bf c}=(c_i^k)_{1\le k\le d,
1\le i\le d-k+1}\in \mathbb{R}^{d(d+1)/2}$ such that $A=\M({\bf
c})$. Let
$$\Gamma=\{\M({\bf h}): h_i^k\in \mathbb{Z}, 1\le k\le d, 1\le i\le d-k+1\}.$$
Then $\Gamma$ is a uniform subgroup of $\G_d$.

\medskip

\subsubsection{}

Let ${\bf a}=(a_i^k)_{1\le k\le d, 1\le i\le d-k+1}\in
\mathbb{R}^{d(d+1)/2}$ and ${\bf b}=(b_i^k)_{1\le k\le d, 1\le i\le
d-k+1}\in \mathbb{R}^{d(d+1)/2}$. If ${\bf c}=(c_i^k)_{1\le k\le d,
1\le i\le d-k+1}\in \mathbb{R}^{d(d+1)/2}$ such that $\M({\bf
c})=\M({\bf a})\M({\bf b})$, then
\begin{equation}\label{sec8-4-eq-1}
c_i^k=\sum \limits_{j=0}^k a_i^{k-j}b_{i+k-j}^j=a_i^k+(\sum
\limits_{j=1}^{k-1}a_i^{k-j}b_{i+k-j}^j)+b_i^k
\end{equation}
for $1\le k \le d$ and $1\le i \le d-k+1$, where we assume
$a_1^0=a_2^0=\ldots=a_d^0=1$ and $b_1^0=b_2^0=\ldots=b_d^0=1$.

\subsubsection{}

Now we endow a compatible metric $\rho$ on $\G_d$ and $\G_d/\Gamma$.

\begin{de}[Metric on $\G_d$]
Let $I$ be the $(d+1)\times (d+1)$ identity matrix. Define $\rho :
\G_d \times \G_d\lra \R$ such that
$$\rho(A, B) = \inf\{\sum_{i=1}^n\min\{\|A_{i-1}A_{i}^{-1}-I\|,
\|A_{i}A_{i-1}^{-1}-I\|\}: A_0,\ldots, A_n\in G; A_0=A,A_n=B\}.$$
\end{de}

\begin{lem}\label{lem-matrix}
For any $A,B\in \G_d$,
$$\rho(A, B)\le
\|AB^{-1}-I\|\le \|A-B\|\|B^{-1}\| \text{ and }$$
$$\frac{\|A-B\|}{\|B\|}\le \|AB^{-1}-I\|\le e^{\rho(A,B)+\rho(A,B)^2+\cdots+\rho(A,B)^d}-1.$$
\end{lem}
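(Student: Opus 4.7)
The first three estimates are direct consequences of submultiplicativity $\|XY\| \le \|X\|\,\|Y\|$ recorded after \eqref{Mat-eq-1}. The bound $\rho(A,B) \le \|AB^{-1} - I\|$ follows by taking the trivial one-step chain $A_0 = A$, $A_1 = B$ in the infimum defining $\rho$. From the identity $AB^{-1} - I = (A - B)B^{-1}$ we obtain $\|AB^{-1} - I\| \le \|A - B\|\,\|B^{-1}\|$, and rearranging $A - B = (AB^{-1} - I)B$ yields $\|A-B\|/\|B\| \le \|AB^{-1} - I\|$.

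Only the final exponential upper bound requires genuine work. The plan is to fix an admissible chain $A = A_0, A_1, \ldots, A_n = B$ in $\G_d$, set $C_i := A_{i-1}A_i^{-1}$ and $D_i := C_i - I$, so that $AB^{-1} = \prod_{i=1}^n (I + D_i)$. Since each $C_i \in \G_d$ is upper unitriangular, each $D_i$ is strictly upper triangular, and so is $C_i^{-1} - I$. The key structural fact I will use is that any product of more than $d$ strictly upper triangular $(d+1)\times(d+1)$ matrices vanishes, since each such factor raises the first nonzero super-diagonal by at least one level. Consequently the full multilinear expansion of $\prod (I + D_i) - I$ truncates at order $d$:
$$\prod_{i=1}^n(I+D_i) - I \;=\; \sum_{k=1}^{d}\ \sum_{1\le i_1<\cdots<i_k\le n} D_{i_1}D_{i_2}\cdots D_{i_k},$$
giving $\|AB^{-1} - I\| \le \sum_{k=1}^d e_k(\|D_1\|,\ldots,\|D_n\|)$, where $e_k$ is the $k$th elementary symmetric polynomial.

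It remains to convert each $\|D_i\| = \|C_i - I\|$ into the symmetric quantity $\epsilon_i := \min\{\|C_i - I\|,\|C_i^{-1}-I\|\}$ that actually enters the definition of $\rho$. Since $C_i^{-1} - I$ is nilpotent of index at most $d+1$, the Neumann expansion of $C_i = (I + (C_i^{-1} - I))^{-1}$ terminates after $d+1$ terms, yielding $\|D_i\| \le \epsilon_i + \epsilon_i^2 + \cdots + \epsilon_i^d$ in either case. Setting $s := \sum_i \epsilon_i$ and applying the elementary inequality $\sum_i a_i^j \le (\sum_i a_i)^j$ valid for $a_i \ge 0$ and integers $j \ge 1$, one obtains $\sum_i \|D_i\| \le s + s^2 + \cdots + s^d$. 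Combining with the standard estimate $e_k(t_1,\ldots,t_n) \le (\sum_i t_i)^k/k!$ and completing to the full exponential series gives
$$\|AB^{-1} - I\| \;\le\; \sum_{k=1}^d \frac{(s+s^2+\cdots+s^d)^k}{k!} \;\le\; e^{\,s+s^2+\cdots+s^d}-1.$$
Taking the infimum over chains replaces $s$ by $\rho(A,B)$ via continuity of the right-hand side in $s$. The principal obstacle lies in the second paragraph: one must isolate the nilpotency-induced truncation of the product expansion and then perform the passage $\|C_i - I\| \to \epsilon_i$ at the cost of only the geometric sum $\epsilon_i+\cdots+\epsilon_i^d$, which is precisely what produces the polynomial shape $s+s^2+\cdots+s^d$ inside the exponential.
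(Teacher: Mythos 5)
Your proof is correct, and the main ingredients coincide with the paper's: both arguments pass from a near-optimal chain to the quantities $t_i = \min\{\|C_i - I\|, \|C_i^{-1} - I\|\}$, exploit the fact that $C_i-I$ and $C_i^{-1}-I$ are strictly upper triangular (hence nilpotent of index $\le d+1$) so that the Neumann series terminates, giving $\|C_i - I\| \le t_i + t_i^2 + \cdots + t_i^d$, and then sum to produce $s + s^2 + \cdots + s^d$ with $s = \sum_i t_i \approx \rho(A,B)$ before exponentiating. The only genuine divergence is in how the product $\prod_i(I+D_i)$ is cashed out. The paper uses the short telescoping inequality $1 + \bigl\|\prod_i C_i - I\bigr\| \le \prod_i(1 + \|C_i - I\|) \le \exp\bigl(\sum_i \|C_i - I\|\bigr)$, which needs no further structure. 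You instead expand $\prod(I+D_i)-I$ multilinearly, invoke the nilpotency-induced truncation at degree $d$, and bound each elementary symmetric polynomial by $e_k(t_1,\dots,t_n) \le (\sum_i t_i)^k/k!$; both routes land at the same display $e^{s+\cdots+s^d}-1$. Note, however, that the truncation you single out as the key structural fact is not actually load-bearing: even without it, $\sum_{k=1}^n e_k(\|D_i\|) = \prod_i(1+\|D_i\|) - 1 \le e^{\sum_i\|D_i\|}-1$, so dropping the nilpotency observation at this stage collapses your argument essentially to the paper's telescoping. The nilpotency of $\G_d$ matters only in the step you and the paper share, namely bounding $\|C^{-1}-I\|$ by a degree-$d$ polynomial in $\|C-I\|$.
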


\begin{proof}
It is clear that it is sufficient to prove
\begin{equation}\label{matrix-key}
\|AB^{-1}-I\|\le e^{\rho(A,B)+\rho(A,B)^2+\cdots+\rho(A,B)^d}-1
\end{equation}
for $A,B\in \G_d$. The others are obvious. Let $A,B\in \G_d$. For
any $\epsilon>0$, there exist  $A_0,\ldots, A_n\in \G_d;
A_0=A,A_n=B$ such that
\begin{align*}
\sum_{i=1}^n\min\{\|A_{i-1}A_{i}^{-1}-I\|,
\|A_{i}A_{i-1}^{-1}-I\|\}\le \rho(A,B)+\epsilon.
\end{align*}
 Let
$t_i=\min\{\|A_{i-1}A_{i}^{-1}-I\|, \|A_{i}A_{i-1}^{-1}-I\|\}$ for
$i=1,2,\cdots,n$.
Note that for $C\in \G_d$, $C^{-1}=I+\sum_{i=1}^d (I-C)^i$. Hence
\begin{equation*}
\|C^{-1}-I\|\le \sum_{i=1}^d \|C-I\|^i.
\end{equation*}
Moreover, if we set $t=\min\{ \|C-I\|,\|C^{-1}-I\|\}$, then
\begin{equation}\label{eq-mat-es-eq-1}
\|C^{-1}-I\|\le t(1+t+t^2+\cdots+t^{d-1}).
\end{equation}
Then by \eqref{eq-mat-es-eq-1},
\begin{align*}
\|A_{i-1}A_{i}^{-1}-I\|&\le t_i(1+t_i+t_i^2+\cdots+t_i^{d-1})\\
&\le t_i(1+(\rho(A,B)+\epsilon)+\cdots+(\rho(A,B)+\epsilon)^{d-1})
\end{align*}
for $i=1,2,\cdots,n$. Thus
\begin{align*}
\sum \limits_{i=1}^n \|A_{i-1}A_{i}^{-1}-I\|&\le \sum
\limits_{i=1}^n
t_i(1+(\rho(A,B)+\epsilon)+\cdots+(\rho(A,B)+\epsilon)^{d-1}) \\
&\le (\rho(A,B)+\epsilon)+\cdots+(\rho(A,B)+\epsilon)^{d}.
\end{align*}
 Now
\begin{align*}
1+\|AB^{-1}-I\|&=1+\|A_0A_n^{-1}-I\|=1+\|A_0A_1^{-1}A_1A_n^{-1}-I\|\\
&=1+\|(A_0A_1^{-1}-I)(A_1A_n^{-1}-I)+(A_0A_1^{-1}-I)+(A_1A_n^{-1}-I)\|\\
&\le
1+\|(A_0A_1^{-1}-I)\|\|(A_1A_n^{-1}-I)\|+\|(A_0A_1^{-1}-I)\|+\|(A_1A_n^{-1}-I)\|\\
&=(1+\|A_0A_1^{-1}-I\|)(1+\|A_1A_n^{-1}-I\|).
\end{align*}
Continuing this process we get that
\begin{align*}
1+\|AB^{-1}-I\|&\le (1+\|A_0A_1^{-1}-I\|)(1+\|A_1A_2^{-1}-I\|)\cdots
(1+\|A_{n-1}A_n^{-1}-I\|)\\
&\le
e^{\|A_0A_1^{-1}-I\|+\|A_1A_2^{-1}-I\|+\cdots+\|A_{n-1}A_n^{-1}-I\|}\\
&=e^{(\rho(A,B)+\epsilon)+\cdots+(\rho(A,B)+\epsilon)^{d}}
\end{align*}
This implies $\|AB^{-1}-I\|\le
e^{(\rho(A,B)+\epsilon)+\cdots+(\rho(A,B)+\epsilon)^{d}}-1$. Let
$\epsilon\searrow 0$, we get \eqref{matrix-key}. This ends the proof
of the lemma.
\end{proof}

\begin{prop}[Metrics on $\G_d$ and $\G_d/\Gamma$] \label{matix-metric}
$\rho$ is a right-invariant metric on $\G_d$.  This descends to a
metric on $\G_d/\Gamma$ by setting
$$\rho(A\Gamma,B\Gamma):=\inf\{\rho(A\gamma, B\gamma'): \gamma, \gamma'\in \Gamma\}.$$
Since $\rho$ is right-invariant, we also have
$$\rho(A\Gamma, B\Gamma) = \inf_{\gamma\in \Gamma}\rho(A, B\gamma).$$
\end{prop}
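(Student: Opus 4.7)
The plan is to verify the metric axioms on $\G_d$ first, then right-invariance, and finally descend to $\G_d/\Gamma$. The definition immediately forces $\rho\ge 0$, and taking the trivial one-step chain $A_0=A$, $A_1=A$ gives $\rho(A,A)=0$; taking the one-step chain $A_0=A$, $A_1=B$ gives $\rho(A,B)\le \min\{\|AB^{-1}-I\|,\|BA^{-1}-I\|\}<\infty$. Symmetry $\rho(A,B)=\rho(B,A)$ follows by reversing any chain, since the summand $\min\{\|A_{i-1}A_i^{-1}-I\|, \|A_iA_{i-1}^{-1}-I\|\}$ is invariant under swapping the two indices. The triangle inequality is obtained by concatenating a chain from $A$ to $B$ with one from $B$ to $C$.

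For right-invariance, fix $C\in \G_d$ and observe that $(A_{i-1}C)(A_iC)^{-1}=A_{i-1}A_i^{-1}$, so the map $(A_0,\ldots,A_n)\mapsto (A_0C,\ldots,A_nC)$ is a bijection between chains from $A$ to $B$ and chains from $AC$ to $BC$ preserving the defining sum; hence $\rho(AC,BC)=\rho(A,B)$. Non-degeneracy on $\G_d$ is precisely where Lemma~\ref{lem-matrix} does the essential work: if $\rho(A,B)=0$, then the inequality $\|AB^{-1}-I\|\le e^{\rho(A,B)+\cdots+\rho(A,B)^d}-1$ forces $AB^{-1}=I$, so $A=B$. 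Without Lemma~\ref{lem-matrix}, one could not a priori rule out that the infimum over chains collapses to zero for distinct points.

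For the descent to $\G_d/\Gamma$, right-invariance gives $\rho(A\gamma,B\gamma')=\rho(A,B\gamma'\gamma^{-1})$, so that $\inf\{\rho(A\gamma,B\gamma'):\gamma,\gamma'\in\Gamma\}=\inf_{\gamma\in\Gamma}\rho(A,B\gamma)$, justifying the simpler displayed formula in the statement. Symmetry is immediate from this formula. The triangle inequality on the quotient is an $\ep$-argument combined with right-invariance: picking $\gamma_1,\gamma_2\in\Gamma$ with $\rho(A,B\gamma_1)\le \rho(A\Gamma,B\Gamma)+\ep$ and $\rho(B,C\gamma_2)\le \rho(B\Gamma,C\Gamma)+\ep$, one concatenates the relevant chains to get
$$\rho(A\Gamma,C\Gamma)\le \rho(A,C\gamma_2\gamma_1)\le \rho(A,B\gamma_1)+\rho(B\gamma_1,C\gamma_2\gamma_1)=\rho(A,B\gamma_1)+\rho(B,C\gamma_2),$$
and then lets $\ep\to 0$.

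The main obstacle is the non-degeneracy on the quotient. Suppose $\rho(A\Gamma,B\Gamma)=0$, so there exist $\gamma_n\in\Gamma$ with $\rho(A,B\gamma_n)\to 0$. By Lemma~\ref{lem-matrix}, $\|A(B\gamma_n)^{-1}-I\|\to 0$, hence $B\gamma_n\to A$ in $\G_d$. Since $\Gamma$ is a discrete uniform subgroup of the Lie group $\G_d$, it is closed in $\G_d$, and therefore the coset $B\Gamma$ is closed in $\G_d$ (being the image of the closed set $\Gamma$ under the homeomorphism $\gamma\mapsto B\gamma$). Consequently $A\in B\Gamma$, i.e.\ $A\Gamma=B\Gamma$, which completes the verification that $\rho$ descends to a genuine metric on $\G_d/\Gamma$.
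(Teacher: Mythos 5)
Your proof is correct and has the same overall structure as the paper's: verify the pseudometric axioms and right-invariance on $\G_d$, use Lemma~\ref{lem-matrix} for non-degeneracy on $\G_d$, and reduce the descent to showing that $\rho(A\Gamma,B\Gamma)=0$ implies $A\Gamma=B\Gamma$. You diverge only in the final step. You appeal to the general fact that a discrete subgroup of a Hausdorff topological group is closed, so that $B\Gamma$ is closed and $B\gamma_n\to A$ forces $A\in B\Gamma$. The paper instead argues quantitatively: it extracts $\gamma_i\in\Gamma$ with $\|A\gamma_i-B\|<\tfrac{1}{2^i(1+\|A^{-1}\|)}$, observes $\|\gamma_i-\gamma_j\|<1$, and uses that distinct integer matrices differ in norm by at least $1$ to conclude that the $\gamma_i$ stabilize and in fact $A\gamma_1=B$. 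The two routes prove the same thing; yours is shorter and more conceptual, the paper's is self-contained and in effect reproves discreteness-implies-closedness in this concrete case. Since $\Gamma$ here is literally the set of matrices in $\G_d$ with integer entries, you could also sidestep the general topological theorem and simply note that $\Gamma$ is closed as the preimage of $\Z^{d(d+1)/2}$ under the continuous coordinate map, which would make your argument as elementary as the paper's while keeping its brevity.
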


\begin{proof} Firstly, it is clear that $\rho:\G_d\times \G_d\lra \R$ is
a right-invariant, non-negative function and for $A,B,C\in \G_d$,
$$\rho(A,B)=\rho(B,A) \text{ and }\rho(A,C)\le \rho(A,B)+\rho(B,C).$$
By Lemma \ref{lem-matrix} $\rho(A,B)=0$ if and only if $\|A-B\|=0$,
i.e., $A=B$. Thus $\rho$ is a right-invariant metric on $\G_d$.
Moreover by Lemma \ref{lem-matrix}, we know that the metric $\rho$
is equivalent to the metric induced by the norm $\|\cdot\|$ on
$\G_d$. Thus, $\rho$ is a compatible metric with topology of $\G_d$.

Next we are going to show that this descends to a metric on
$\G_d/\Gamma$ by setting
$$\rho(A\Gamma,B\Gamma):=\inf\{\rho(A\gamma, B\gamma'): \gamma, \gamma'\in \Gamma\}.$$
Since $\rho$ is a right-invariant metric on $\G_d$, it is sufficient
to show that if $\rho(A\Gamma,B\Gamma)=0$ then $A\Gamma=B\Gamma$.
Suppose $\rho(A\Gamma,B\Gamma)=0$. Since $\rho$ is right-invariant,
$\inf_{\gamma\in \Gamma}\rho(A\gamma,B)=0$. Moreover we can find
$\gamma_i\in \Gamma$ such that
$\|B\|(e^{\rho(A\gamma_i,B)+\cdots+\rho(A\gamma_i,B)^d}-1)<\frac{1}{2^i(1+\|A^{-1}\|)}$
for each $i\in \mathbb{N}$. By Lemma \ref{lem-matrix}, we have
$$\|A\gamma_i-B\|\le
\|B\|(e^{\rho(A\gamma_i,B)+\cdots+\rho(A\gamma_i,B)^d}-1)<\frac{1}{2^i(1+\|A^{-1}\|)}$$
for $i\in \mathbb{N}$. Thus for all $i\le j\in \mathbb{N}$,
\begin{align*}
\|\gamma_i-\gamma_j\|&=\|A^{-1}(A(\gamma_i-B)-A(\gamma_j-B))\|\\
&\le
\|A^{-1}\|(\|A\gamma_i-B\|+\|A\gamma_j-B\|)\\
&<\|A^{-1}\|(\frac{1}{2^i(1+\|A^{-1}\|)}+\frac{1}{2^i(1+\|A^{-1}\|)})<1.
\end{align*}
Since $\gamma_i,\gamma_j\in \Gamma$, this implies
$\gamma_i=\gamma_j$ for $i,j\in \mathbb{N}$. Thus
$$\|A\gamma_1-B\|=\|A\gamma_j-B\|<\frac{1}{2^j(1+\|A^{-1}\|)}$$
for any $j\in \mathbb{N}$. Hence $\|A\gamma_1-B\|=0$. So
$A\gamma_1=B$ and  $A\Gamma=B\Gamma$. This ends the proof of the
proposition.
\end{proof}

\section{Generalized polynomials}\label{section-GP}

In this section we introduce the notions and basic properties of
(special) generalized polynomials. It will be used in the following
sections.

\subsection{Definitions}

\subsubsection{}

For a real number $a\in\R$, let $||a||=\inf\{|a-n|:n\in\Z\}$ and
$$\lceil{a}\rceil=\min\{m\in\Z: |a-m|=||a||\}.$$

When studying $\F_{d,0}$ we find that the generalized polynomials
appear naturally. Here is the precise definition. Note that we use
$f(n)$ or $f$ to denote the generalized polynomials.

\subsubsection{Generalized polynomials}

\begin{de} Let $d\in\N$. We define the {\it generalized polynomials} of
degree $\le d$ (denoted by GP$_d$) by induction. For $d=1$, GP$_1$
is the collection of functions from $\Z$ to $\R$ containing $h_a$,
$a\in \R$ with $h_a(n)=an$ for each $n\in \Z$ which is closed under
taking $\lceil\ \rceil$, multiplying by a constant and the finite
sums.

Assume that GP$_i$ is defined for $i<d$. Then GP$_d$ is the
collection of functions from $\Z$ to $\R$ containing GP$_i$ with
$i<d$, functions of the forms $$a_0n^{p_0}{\lceil{f_1(n)}
\rceil}\ldots {\lceil{f_k(n)} \rceil}$$ (with $a_0\in\R,$ $p_0\ge
0$, $k\ge 0$, $f_l\in$ GP$_{p_l}$ and $\sum_{l=0}^kp_l=d$), which is
closed under taking $\lceil\ \rceil$, multiplying by a constant and
the finite sums. Let GP$=\cup_{i=1}^\infty$GP$_i$.
\end{de}

For example, $a_1\lceil{a_2\lceil{a_3n}}\rceil \rceil+b_1n\in$
GP$_1$, and
$a_1\lceil{a_2n^2}\rceil+b_1\lceil{b_2\lceil{b_3n}}\rceil
\rceil+c_1n^2+c_2n\in$ GP$_2$, where $a_i,b_i,c_i\in\R$. Note that
if $f\in$ GP then $f(0)=0$.

\subsubsection{Special generalized polynomials}

Since generalized polynomials are very complicated, we will specify
a subclass of them, called  the {\it special generalized
polynomials} which will be used in our proofs of the main results.
To do this, we need some notions.

For $a\in \mathbb{R}$, we define $L(a)=a$. For $a_1,a_2\in
\mathbb{R}$ we define $L(a_1,a_2)=a_1\lceil L(a_2)\rceil$.
Inductively, for $a_1,a_2,\cdots,a_{\ell}\in \mathbb{R}$ ($\ell\ge
2$) we define
\begin{equation}\label{eq-de-L}
L(a_1,a_2,\cdots,a_{\ell})=a_1\lceil
L(a_2,a_3,\cdots,a_\ell)\rceil.
\end{equation}

For example, $L(a_1,a_2,a_3)=a_1\lceil a_2\lceil a_3\rceil\rceil$.

\medskip

We give now the precise definition of special generalized
polynomials.

\begin{de} For $d\in\N$ we define {\it special generalized polynomials of degree} $\le d$,
denoted by SGP$_d$ as follows.  SGP$_d$ is the collection of
generalized polynomials of the forms
$L(n^{j_1}a_1,\cdots,n^{j_\ell}a_\ell)$, where $1\le \ell \le d,
a_1,\cdots,a_\ell\in \mathbb{R}, j_1,\cdots,j_\ell\in \mathbb{N}
\text{ with } \sum_{t=1}^\ell j_t\le d.$
\end{de}

Thus SGP$_1=\{an : a\in\R\},$ SGP$_2=\{an^2,
bn\lceil{cn}\rceil,en:a,b,c,e\in\R\}$ and SGP$_3$=SGP$_2\cup
\{an^3,an\lceil bn^2\rceil, an^2\lceil bn\rceil, an\lceil bn\lceil
cn\rceil\rceil: a,b,c\in\R\}$.

\subsubsection{$\F_{GP_d}$ and $\F_{SGP_d}$}

Let $\F_{GP_d}$  be the family generated by the sets of forms
$$\bigcap_{i=1}^k\{n\in\Z:P_i(n)\ (\text{mod}\ \Z)\in (-\ep_i,\ep_i)
\},$$ where $k\in\N$, $P_i\in GP_d$, and $\ep_i>0$, $1\le i\le k$.
Note that $P_i(n)\ (\text{mod}\ \Z)\in (-\ep_i,\ep_i)$ if and only
if $||P_i(n)||<\ep_i.$

Let $\F_{SGP_d}$ be the family generated by the sets of forms
$$\bigcap_{i=1}^k\{n\in\Z:P_i(n)\ (\text{mod}\ \Z)\in (-\ep_i,\ep_i)
\},$$ where $k\in\N$, $P_i\in SGP_d$, and $\ep_i>0$, $1\le i\le k$.
Note that from the definition both $\F_{GP_d}$ and $\F_{SGP_d}$ are
filters; and $\F_{SGP_d}\subset \F_{GP_d}.$

\subsection{Basic properties of generalized polynomials}

\subsubsection{}

The following lemmas lead the way to simplify the generalized
polynomials. Note that for $f\in$ GP we let $f^*=-\lceil{f}\rceil.$
\begin{lem}\label{simple2} Let $c\in\R$ and $f_1, \ldots, f_k\in GP$ with $k\in \N$. Then
$$c\lceil{f_1}\rceil\ldots \lceil{f_k}\rceil=c(-1)^k\prod_{i=1}^k(f_i-\lceil{f_i}\rceil)
-c(-1)^k\sum_{i_1,\ldots,i_k\in
\{1,*\}\atop{(i_1,\ldots,i_k)\not=(*,\ldots,*)}}f_{1}^{i_1}\ldots
f_{k}^{i_k}.$$ Particularly if $k=2$ we get that
$$c\lceil{f_1}\rceil\lceil{f_2}\rceil=cf_1\lceil{f_2}\rceil-cf_1f_2+
cf_2\lceil{f_1}\rceil+c(f_1-\lceil{f_1}\rceil)(f_2-\lceil{f_2}\rceil).$$
\end{lem}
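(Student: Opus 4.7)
The plan is to recognize this as a purely formal algebraic identity obtained by expanding a single product and isolating one term. The key observation is that by the definition $f^{\ast}=-\lceil f\rceil$, we may rewrite
\[
f_i-\lceil f_i\rceil = f_i^{1}+f_i^{\ast},
\]
so that the ``exponents'' $1$ and $\ast$ in the right-hand sum exactly index the two summands $f_i$ and $-\lceil f_i\rceil$ appearing in each factor of the product $\prod_{i=1}^k(f_i-\lceil f_i\rceil)$.

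First I would expand this product by full distribution, getting
\[
\prod_{i=1}^k\bigl(f_i-\lceil f_i\rceil\bigr)=\sum_{(i_1,\ldots,i_k)\in\{1,\ast\}^k}f_1^{i_1}\cdots f_k^{i_k}.
\]
Next I would separate off the single term corresponding to $(i_1,\ldots,i_k)=(\ast,\ldots,\ast)$; this term is
\[
f_1^{\ast}\cdots f_k^{\ast}=(-\lceil f_1\rceil)\cdots(-\lceil f_k\rceil)=(-1)^k\lceil f_1\rceil\cdots\lceil f_k\rceil.
\]
Substituting back, multiplying both sides by $c(-1)^k$, and rearranging yields precisely the stated formula
\[
c\lceil f_1\rceil\cdots\lceil f_k\rceil=c(-1)^k\prod_{i=1}^k\bigl(f_i-\lceil f_i\rceil\bigr)-c(-1)^k\!\!\!\sum_{\substack{(i_1,\ldots,i_k)\in\{1,\ast\}^k\\ (i_1,\ldots,i_k)\neq(\ast,\ldots,\ast)}}\!\!\!f_1^{i_1}\cdots f_k^{i_k}.
\]

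The specialization to $k=2$ is then obtained by writing out the three non-$(\ast,\ast)$ terms $f_1f_2$, $-f_1\lceil f_2\rceil$, $-\lceil f_1\rceil f_2$, together with the leading term $c(f_1-\lceil f_1\rceil)(f_2-\lceil f_2\rceil)$, and regrouping.

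There is no genuine obstacle in the argument; it is an identity of formal expressions in the $f_i$ and $\lceil f_i\rceil$, and nothing about the fact that the $f_i$ are generalized polynomials is used. The only point requiring a little care is the sign bookkeeping, namely that each factor $f_i^{\ast}$ contributes a minus sign, which is why the all-$\ast$ term collects a global $(-1)^k$ and the overall equation is multiplied through by $c(-1)^k$.
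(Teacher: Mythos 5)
Your argument is correct and is essentially identical to the paper's: expand $\prod_{i=1}^k(f_i-\lceil f_i\rceil)=\sum_{(i_1,\ldots,i_k)\in\{1,*\}^k}f_1^{i_1}\cdots f_k^{i_k}$, peel off the $(*,\ldots,*)$ term $(-1)^k\lceil f_1\rceil\cdots\lceil f_k\rceil$, multiply by $c(-1)^k$, and rearrange; the $k=2$ case follows by listing the three remaining terms. The sign bookkeeping and the use of $f^*=-\lceil f\rceil$ as an index for the expansion are exactly as in the paper.
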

\begin{proof}
Expanding $\prod_{i=1}^k(f_i-\lceil{f_i}\rceil)$ we get that
$$\prod_{i=1}^k(f_i-\lceil{f_i}\rceil)=\sum_{i_1,\ldots,i_k\in \{1,*\}}f_{1}^{i_1}\ldots f_{k}^{i_k}.$$
So we have $$c\lceil{f_1}\rceil\ldots
\lceil{f_k}\rceil=c(-1)^k\prod_{i=1}^k(f_i-\lceil{f_i}\rceil)-c(-1)^k\sum_{i_1,\ldots,i_k\in
\{1,*\}\atop{(i_1,\ldots,i_k)\not=(*,\ldots,*)}}f_{1}^{i_1}\ldots
f_{k}^{i_k}.$$
\end{proof}

Let $c=1$ in Lemma \ref{simple2} we have

\begin{lem}\label{simple3} Let $f_1,f_2, \ldots,f_k\in GP$. Then
$$f_1\lceil{f_2}\rceil\ldots\lceil{f_k}\rceil =(-1)^{k-1}\prod_{i=1}^k(f_i-\lceil{f_i}\rceil)
+(-1)^k\sum_{i_1,\ldots,i_k\in
\{1,*\}\atop{(i_1,\ldots,i_k)\not=(1,*,\ldots,*)}}f_{1}^{i_1}\ldots
f_{k}^{i_k}.$$

Particularly if $k=2$ we have

$$f_1\lceil{f_2}\rceil =\lceil{f_1}\rceil\lceil{f_2}\rceil +f_1f_2-
f_2\lceil{f_1}\rceil-(f_1-\lceil{f_1}\rceil)(f_2-\lceil{f_2}\rceil).$$
\end{lem}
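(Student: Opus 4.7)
The identity is purely algebraic in the same spirit as Lemma \ref{simple2}; only the role of the first factor changes from $\lceil f_1\rceil$ to $f_1$. My plan is therefore to reuse the distributive expansion trick, isolating a different term.

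First, I would expand the product
$$\prod_{i=1}^k (f_i - \lceil f_i\rceil)$$
by choosing, for each index $i$, either the summand $f_i$ (label $1$) or the summand $-\lceil f_i\rceil$ (label $*$). With the notation $f_i^1 := f_i$ and $f_i^* := -\lceil f_i\rceil$ already in use, this gives
$$\prod_{i=1}^k (f_i - \lceil f_i\rceil) = \sum_{(i_1,\ldots,i_k)\in\{1,*\}^k} f_1^{i_1}\cdots f_k^{i_k}.$$

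Next, I would single out the summand corresponding to the tuple $(1,*,\ldots,*)$, which equals
$$f_1\cdot(-\lceil f_2\rceil)\cdots(-\lceil f_k\rceil) = (-1)^{k-1}\,f_1\lceil f_2\rceil\cdots\lceil f_k\rceil.$$
Solving the resulting identity for $f_1\lceil f_2\rceil\cdots\lceil f_k\rceil$ (i.e.\ subtracting all remaining summands from both sides and multiplying by $(-1)^{k-1}$) gives exactly
$$f_1\lceil f_2\rceil\cdots\lceil f_k\rceil = (-1)^{k-1}\prod_{i=1}^k(f_i-\lceil f_i\rceil) + (-1)^k\!\!\sum_{(i_1,\ldots,i_k)\neq(1,*,\ldots,*)}\!\! f_1^{i_1}\cdots f_k^{i_k},$$
which is the displayed formula.

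For the $k=2$ case, I would simply specialize: the three tuples in $\{1,*\}^2$ other than $(1,*)$ are $(1,1)$, $(*,1)$, $(*,*)$, contributing $f_1f_2$, $-f_2\lceil f_1\rceil$, and $\lceil f_1\rceil\lceil f_2\rceil$ respectively. Substituting these into the general identity and moving the $(f_1-\lceil f_1\rceil)(f_2-\lceil f_2\rceil)$ term to the right-hand side yields the stated rearrangement. There is no genuine obstacle here; the only thing to be careful about is bookkeeping the sign factor $(-1)^{k-1}$ when isolating the distinguished term.
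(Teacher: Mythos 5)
Your proof is correct, and it is essentially the same argument the paper uses for Lemma \ref{simple2}: expand $\prod_{i=1}^k(f_i-\lceil f_i\rceil)$ as $\sum_{(i_1,\ldots,i_k)\in\{1,*\}^k} f_1^{i_1}\cdots f_k^{i_k}$, then isolate a single distinguished tuple and solve for it. The only difference from Lemma \ref{simple2} is which tuple is singled out: there it is $(*,\ldots,*)$, giving $\lceil f_1\rceil\cdots\lceil f_k\rceil$; here it is $(1,*,\ldots,*)$, giving $f_1\lceil f_2\rceil\cdots\lceil f_k\rceil$ up to the sign $(-1)^{k-1}$, exactly as you compute. The paper's own one-line derivation ``Let $c=1$ in Lemma \ref{simple2}'' is misleading as written, since setting $c=1$ in Lemma \ref{simple2} produces the identity for $\lceil f_1\rceil\cdots\lceil f_k\rceil$, not for $f_1\lceil f_2\rceil\cdots\lceil f_k\rceil$; your direct expansion is in fact the cleanest and most faithful route. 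Your $k=2$ specialization, enumerating the tuples $(1,1)$, $(*,1)$, $(*,*)$, is also correct and matches the stated formula.
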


Let $k=1$ in Lemma \ref{simple2} we have

\begin{lem}\label{simple1} Let $c\in\R$ and $f\in GP$. Then $c\lceil{f}\rceil=cf-c(f-\lceil{f}\rceil)$.
\end{lem}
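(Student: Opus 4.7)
The plan is to observe that this is the $k=1$ specialization of Lemma \ref{simple2} (as the statement itself flags), and indeed is a direct algebraic identity that requires no structural knowledge of generalized polynomials.

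First, I would simply rearrange: starting from the right-hand side,
\[
cf - c(f - \lceil f \rceil) = cf - cf + c\lceil f \rceil = c\lceil f \rceil,
\]
which is the left-hand side. This uses only the distributive law in $\R$ and the fact that, for any $n \in \Z$, both $f(n)$ and $\lceil f(n)\rceil$ are real numbers, so ordinary arithmetic applies pointwise. No property of $\lceil\cdot\rceil$ (such as the defining inequality $|f-\lceil f\rceil| = \|f\|$) is invoked; the identity would hold if $\lceil f\rceil$ were replaced by any real-valued function whatsoever.

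Alternatively, one can derive it as a corollary of Lemma \ref{simple2} by setting $k=1$: the product $\prod_{i=1}^{1}(f_i - \lceil f_i\rceil)$ reduces to $f - \lceil f\rceil$, the sum over tuples $(i_1,\ldots,i_k) \in \{1,*\}^k \setminus \{(*,\ldots,*)\}$ reduces to the single term $f_1^{1} = f$, and the sign factor $c(-1)^k = -c$ yields precisely $c\lceil f \rceil = -c(f-\lceil f\rceil) + cf$, as desired.

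There is no real obstacle here; the content of the lemma is purely to record this rewriting rule in a form that will be convenient in later manipulations (where products $c\lceil f\rceil$ must be re-expressed in terms of $cf$ together with the ``fractional part'' $f - \lceil f\rceil$, whose supremum norm is controlled by $\|f\|$). Accordingly, my write-up would be a single line of algebra, or a one-sentence appeal to the $k=1$ case of Lemma \ref{simple2}.
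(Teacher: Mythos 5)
Your proof is correct and matches the paper's route: the paper derives Lemma \ref{simple1} precisely by setting $k=1$ in Lemma \ref{simple2}, and you both note this and supply the (trivial) direct rearrangement. Nothing to add.
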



\subsubsection{}

In the next subsection we will show that $\F_{GP_d}=\F_{SGP_d}.$ To
do this we use induction. To make the proof clearer, first we give
some results under the assumption
\begin{equation}\label{firsta}
\F_{GP_{d-1}}\subset\F_{SGP_{d-1}}.
\end{equation}
\begin{de}\label{de-ws} Let $r\in \mathbb{N}$ with $r\ge 2$. We define
$$\mathcal{SW}_r=\{ \prod \limits_{i=1}^\ell (w_i(n)-\lceil w_i(n) \rceil):
\ell\ge 2, r_i\ge 1, w_i(n)\in GP_{r_i} \text{ and } \sum
\limits_{i=1}^\ell r_i\le r\}$$ and
$$\mathcal{W}_r=\mathbb{R}-\text{Span}\{\mathcal{SW}_r\}, $$
that is,
$$\mathcal{W}_r=\{ \sum \limits_{j=1}^\ell a_jp_j(n): \ell\ge 1, a_j\in \mathbb{R}, p_j(n)\in
\mathcal{SW}_r \text{ for each }j=1,2,\cdots,\ell\}.$$
\end{de}

\begin{lem}\label{lem1-1}
Under the assumption \eqref{firsta}, one has for any $p(n)\in
\mathcal{W}_d$ and $\epsilon>0$,
$$\{ n\in \mathbb{Z}:p(n) \ (\text{mod}\ \Z)\in (-\epsilon,\epsilon)\}\in \F_{SGP_{d-1}}.$$
\end{lem}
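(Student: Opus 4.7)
The plan is to exploit two features of elements of $\mathcal{W}_d$: each factor $w_i(n)-\lceil w_i(n)\rceil$ in a generator $p_j \in \mathcal{SW}_d$ equals $\pm\|w_i(n)\|$ and hence lies in $[-1/2,1/2]$, and the constraint $\ell\ge 2$ together with $r_i\ge 1$ and $\sum r_i\le d$ forces each $w_i$ to belong to $GP_{r_i}\subseteq GP_{d-1}$. So for each summand I can pick out a single factor whose smallness in $\|\cdot\|$ drives the whole product to be small, and that factor will be a $GP_{d-1}$ polynomial, making the controlling set a generator of $\F_{GP_{d-1}}$.

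More precisely, I would first write
\[
p(n)=\sum_{j=1}^{m}a_j\prod_{i=1}^{\ell_j}\bigl(w_{j,i}(n)-\lceil w_{j,i}(n)\rceil\bigr),
\]
with $\ell_j\ge 2$, $w_{j,i}\in GP_{r_{j,i}}$, $r_{j,i}\ge 1$ and $\sum_i r_{j,i}\le d$. For each $j$, pick any index $i_j\in\{1,\dots,\ell_j\}$; since there is at least one other factor with degree $\ge 1$, we get $r_{j,i_j}\le d-1$, so $w_{j,i_j}\in GP_{d-1}$. Using $|w_{j,i}(n)-\lceil w_{j,i}(n)\rceil|=\|w_{j,i}(n)\|\le \tfrac12$ for $i\ne i_j$, each summand is bounded by
\[
\Bigl|a_j\prod_{i=1}^{\ell_j}\bigl(w_{j,i}(n)-\lceil w_{j,i}(n)\rceil\bigr)\Bigr|\le |a_j|\,\bigl(\tfrac12\bigr)^{\ell_j-1}\|w_{j,i_j}(n)\|.
\]

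Next, choose $\delta>0$ so that $\delta\sum_{j=1}^{m}|a_j|(\tfrac12)^{\ell_j-1}<\min(\epsilon,\tfrac12)$. Then on the intersection
\[
B:=\bigcap_{j=1}^{m}\bigl\{n\in\Z:\|w_{j,i_j}(n)\|<\delta\bigr\}=\bigcap_{j=1}^{m}\bigl\{n\in\Z:w_{j,i_j}(n)\ (\mathrm{mod}\ \Z)\in(-\delta,\delta)\bigr\}
\]
we get $|p(n)|<\min(\epsilon,\tfrac12)$, and in particular $p(n)\ (\mathrm{mod}\ \Z)\in(-\epsilon,\epsilon)$. Thus $B\subseteq\{n:p(n)\ (\mathrm{mod}\ \Z)\in(-\epsilon,\epsilon)\}$.

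Finally, since each $w_{j,i_j}\in GP_{d-1}$, the set $B$ is by definition a generator of $\F_{GP_{d-1}}$, hence $B\in\F_{GP_{d-1}}$. Applying the inductive hypothesis \eqref{firsta} gives $B\in\F_{SGP_{d-1}}$, and because a family is closed under taking supersets, the conclusion $\{n:p(n)\ (\mathrm{mod}\ \Z)\in(-\epsilon,\epsilon)\}\in\F_{SGP_{d-1}}$ follows. There is no real obstacle here — the whole point is that the condition $\ell\ge 2$ in the definition of $\mathcal{SW}_d$ is exactly what lets one drop the total degree by one and invoke the induction hypothesis; the only bookkeeping is choosing $\delta$ small enough to simultaneously swallow the prefactor $\sum_j|a_j|(1/2)^{\ell_j-1}$ and stay below $1/2$ so that $|\cdot|$ and $\|\cdot\|$ agree.
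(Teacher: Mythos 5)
Your proof is correct and uses essentially the same idea as the paper: since $\ell_j\ge 2$ forces one factor $w_{j,i_j}$ to lie in $GP_{d-1}$, and the remaining factors are bounded by $\tfrac12$, smallness of $\|w_{j,i_j}(n)\|$ controls the whole summand, so the target set contains a generator of $\F_{GP_{d-1}}$, which lies in $\F_{SGP_{d-1}}$ by \eqref{firsta}. The paper organizes this by first invoking the filter property of $\F_{SGP_{d-1}}$ to reduce to a single term $aq(n)$ with $q\in\mathcal{SW}_d$ (always taking $w_1$ as the controlling factor), whereas you handle the whole sum at once via the intersection $B$ — a purely cosmetic difference.
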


\begin{proof}
Since $\F_{SGP_d}$ is a filter, it is sufficient to show that for
any $p(n)=aq(n)$ and $\frac{1}{2}>\delta>0$ with $q(n)\in
\mathcal{SW}_d$ and $a\in \mathbb{R}$,
$$\{ n\in \mathbb{Z}:p(n)\ (\text{mod}\ \Z) \in (-\delta,\delta)\}\in \F_{SGP_{d-1}}.$$
Note that as $q(n)\in \mathcal{SW}_d$, there exist $\ell\ge 2,
r_i\ge 1, w_i(n)\in GP_{r_i} \text{ and } \sum \limits_{i=1}^\ell
r_i\le d$ such that $q(n)=\prod \limits_{i=1}^\ell (w_i(n)-\lceil
w_i(n) \rceil)$. Since $\ell\ge 2$, one has $r_1\le d-1$ and so
$w_1(n)\in GP_{d-1}$. By the assumption \eqref{firsta}, $\{n\in
\mathbb{Z}: w_1(n)\ (\text{mod}\ \Z)\in
(-\frac{\delta}{1+|a|},\frac{\delta}{1+|a|}) \}\in \F_{SGP_{d-1}}$.
By the inequality $|q(n)|\le |a||w_1(n)-\lceil w_1(n) \rceil|$ for
$n\in \mathbb{Z}$, we get that
\begin{align*}
\{ n\in \mathbb{Z}:p(n)\ (\text{mod}\ \Z) \in (-\delta,\delta)\}
&\supset
\{n\in \mathbb{Z}: |w_1(n)-\lceil w_1(n) \rceil|\in (-\frac{\delta}{1+|a|},\frac{\delta}{1+|a|})\}\\
&=\{n\in \mathbb{Z}: w_1(n)\ (\text{mod}\ \Z)\in
(-\frac{\delta}{1+|a|},\frac{\delta}{1+|a|})\}.
\end{align*}
Thus $\{ n\in \mathbb{Z}:p(n)\ (\text{mod}\ \Z) \in
(-\delta,\delta)\} \in \F_{SGP_{d-1}}$ since $\{n\in \mathbb{Z}:
w_1(n)\ (\text{mod}\ \Z)\in
(-\frac{\delta}{1+|a|},\frac{\delta}{1+|a|})\}\in \F_{SGP_{d-1}}$.
\end{proof}

\begin{de}
Let $r\in \mathbb{N}$ with $r\ge 2$. For $q_1(n),q_2(n)\in GP_{r}$
we define
$$q_1(n)\simeq_r q_2(n)$$
if there exist $h_1(n)\in GP_{r-1}$ and $h_2(n)\in \mathcal{W}_r$
such that
$$q_2(n)=q_1(n)+h_1(n)+h_2(n) \ (\text{mod} \ \Z)$$ for all $n\in \mathbb{Z}$.
\end{de}

\begin{lem} \label{lem8-13} Let $p(n)\in GP_r$ and $q(n)\in GP_t$, $r,t\in \mathbb{N}$. Then
\begin{enumerate}
\item $p(n)\lceil q(n)\rceil \simeq_{r+t} (p(n)-\lceil p(n) \rceil) q(n)$.

\item if $q_1(n),q_2(n),\cdots,q_k(n)\in GP_t$ such that $q(n)= \sum_{i=1}^k q_i(n)$, then
$$p(n)\lceil q(n)\rceil \simeq_{r+t} \sum_{i=1}^k p(n)\lceil q_i(n)\rceil.$$
\end{enumerate}
\end{lem}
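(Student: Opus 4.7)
The plan is to read off both assertions from the $k=2$ case of Lemma \ref{simple3}, namely the identity
$$f_1\lceil f_2\rceil = \lceil f_1\rceil\lceil f_2\rceil + f_1f_2 - f_2\lceil f_1\rceil - (f_1-\lceil f_1\rceil)(f_2-\lceil f_2\rceil),$$
together with the elementary observation that $\lceil p(n)\rceil\lceil q(n)\rceil\in\Z$ for every $n\in\Z$.

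For part (1), I would substitute $f_1=p(n)$ and $f_2=q(n)$ and, using $f_1f_2-f_2\lceil f_1\rceil=(f_1-\lceil f_1\rceil)f_2$, rewrite the identity as
$$p(n)\lceil q(n)\rceil - (p(n)-\lceil p(n)\rceil)q(n) = \lceil p(n)\rceil\lceil q(n)\rceil - (p(n)-\lceil p(n)\rceil)(q(n)-\lceil q(n)\rceil).$$
The first term on the right is integer valued, hence vanishes modulo $\Z$. The second term is a length-$2$ product of the form $\prod_{i=1}^{2}(w_i-\lceil w_i\rceil)$ with $w_1=p\in GP_r$, $w_2=q\in GP_t$, and $r_1+r_2=r+t$, so it lies in $\mathcal{SW}_{r+t}\subset\mathcal{W}_{r+t}$. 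Taking $h_1=0\in GP_{r+t-1}$ and $h_2\in\mathcal{W}_{r+t}$ equal to the (signed) error term therefore exhibits $p\lceil q\rceil\simeq_{r+t}(p-\lceil p\rceil)q$.

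For part (2), I would first note that $\simeq_{r+t}$ is an equivalence relation: symmetry uses that $GP_{r+t-1}$ and $\mathcal{W}_{r+t}$ are each closed under negation, while transitivity uses that both are closed under finite sums. Applying part (1) once to the pair $(p,q)$ and $k$ times to the pairs $(p,q_i)$ yields
$$p\lceil q\rceil\simeq_{r+t}(p-\lceil p\rceil)q\quad\text{and}\quad\sum_{i=1}^k p\lceil q_i\rceil\simeq_{r+t}\sum_{i=1}^k(p-\lceil p\rceil)q_i.$$
Since $q=\sum_i q_i$ gives $\sum_{i=1}^k(p-\lceil p\rceil)q_i=(p-\lceil p\rceil)q$ as a genuine equality of functions, transitivity of $\simeq_{r+t}$ closes the argument.

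I do not expect a genuine obstacle. The entire content of the lemma is the single observation that $\lceil p(n)\rceil\lceil q(n)\rceil\in\Z$, which is what promotes the exact identity from Lemma \ref{simple3} to a congruence modulo $\Z$ with error of the required form. The accompanying degree bookkeeping — that both $p\lceil q\rceil$ and $(p-\lceil p\rceil)q$ lie in $GP_{r+t}$ and that the error is a length-$2$ product of total weight exactly $r+t$ — is immediate from the inductive definitions of $GP$ and $\mathcal{W}_{r+t}$.
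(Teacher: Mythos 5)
Your proof is correct and takes the same route as the paper: part (1) is precisely the $k=2$ case of Lemma \ref{simple3} combined with the fact that $\lceil p(n)\rceil\lceil q(n)\rceil\in\Z$ (so it drops out modulo $\Z$) while $(p-\lceil p\rceil)(q-\lceil q\rceil)$ is a length-$2$ element of $\mathcal{SW}_{r+t}$; part (2) follows by applying (1) termwise and using that $GP_{r+t-1}$ and $\mathcal{W}_{r+t}$ are closed under negation and finite sums. The paper states this proof in one line; you have correctly supplied the bookkeeping it leaves implicit.
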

\begin{proof} (1) follows from Lemma \ref{simple3} and (2) follows from (1).
\end{proof}

\begin{de} For $r\in \mathbb{N}$, we define
$${GP_r'}=\{p\in GP_r: \{ n\in \mathbb{Z}: p(n)\ (\text{mod}\ \Z)\in
(-\epsilon,\epsilon)\}\in \F_{SGP_r}\text{ for any }\epsilon>0\}.$$
\end{de}

\begin{prop}\label{useful} It is clear that
\begin{enumerate}
\item For $p(n)\in GP_r$, $p(n)\in GP'_r$ if and only if $-p(n)\in
GP'_r$.

\item If $p_1(n),p_2(n),\cdots,p_k(n)\in GP'_r$ then
$$p(n)=p_1(n)+p_2(n)+\cdots+p_k(n)\in GP'_r.$$

\item $\F_{GP_d}\subset\F_{SGP_d}$ if and only if $GP'_d=GP_d$.
\end{enumerate}
\end{prop}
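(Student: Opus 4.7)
The plan is to verify each of the three items by direct arguments using the definitions together with the facts that $\F_{SGP_r}$ is a filter, that $GP_r$ is closed under multiplication by constants and finite sums, and the identity $\|a\|=\|-a\|$ together with the triangle inequality for $\|\cdot\|$. None of the three items should present any real obstacle; as the phrase ``It is clear that'' in the statement suggests, the work is largely bookkeeping about the family $\F_{SGP_r}$.

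For (1), I would note that $\{n\in\Z:p(n)\ (\text{mod}\ \Z)\in(-\ep,\ep)\}$ and $\{n\in\Z:-p(n)\ (\text{mod}\ \Z)\in(-\ep,\ep)\}$ are literally the same subset of $\Z$, since $\|a\|=\|-a\|$. Since $GP_r$ is closed under multiplication by $-1$, we also have $p\in GP_r$ iff $-p\in GP_r$. Combining these, $p\in GP'_r$ iff $-p\in GP'_r$.

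For (2), set $p=p_1+\cdots+p_k$. Then $p\in GP_r$ by closure of $GP_r$ under finite sums, and the triangle inequality for $\|\cdot\|$ yields
\[
\{n\in\Z:p(n)\ (\text{mod}\ \Z)\in(-\ep,\ep)\}\supset\bigcap_{i=1}^{k}\{n\in\Z:p_i(n)\ (\text{mod}\ \Z)\in(-\ep/k,\ep/k)\}.
\]
Each set on the right lies in $\F_{SGP_r}$ by the hypothesis that $p_i\in GP'_r$. Since $\F_{SGP_r}$ is a filter (closed under finite intersections) and a family (hereditary upward), the left-hand superset lies in $\F_{SGP_r}$, whence $p\in GP'_r$.

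For (3), if $GP'_d=GP_d$, then every generator $\bigcap_{i=1}^{k}\{n\in\Z:P_i(n)\ (\text{mod}\ \Z)\in(-\ep_i,\ep_i)\}$ of $\F_{GP_d}$ is a finite intersection of sets in $\F_{SGP_d}$, hence lies in the filter $\F_{SGP_d}$, giving $\F_{GP_d}\subset\F_{SGP_d}$. Conversely, if $\F_{GP_d}\subset\F_{SGP_d}$, then for each $p\in GP_d$ and $\ep>0$ the set $\{n\in\Z:p(n)\ (\text{mod}\ \Z)\in(-\ep,\ep)\}$ is a single generator of $\F_{GP_d}$, so it belongs to $\F_{SGP_d}$; this shows $p\in GP'_d$, and the reverse inclusion $GP'_d\subset GP_d$ is built into the definition.
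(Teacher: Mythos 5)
Your proposal is correct and follows the same route the paper intends: for (2) you use the exact inclusion the paper cites (with the subadditivity of $\|\cdot\|$ supplying the triangle inequality), and (1), (3) are the direct unwinding of the definitions. The paper's proof is terser but identical in substance.
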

\begin{proof} (1) can be verified directly. (2) follows from the fact
that for each $\ep>0$, $\{n\in\Z: p(n)\ (\text{mod}\ \Z)\in
(-\epsilon,\epsilon)\}\supset \cap_{i=1}^k \{n\in\Z: p_i(n)\
(\text{mod}\ \Z)\in (-\ep/k,\ep/k)\}$. (3) follows from the
definition of $GP'_d$.
\end{proof}

\begin{lem}\label{lem-8-equi} Let $p(n),q(n)\in GP_d$ with $p(n)\simeq_d q(n)$. Under the assumption \eqref{firsta},
$p(n)\in GP'_d$ if and only if $q(n)\in GP'_d$.
%
\end{lem}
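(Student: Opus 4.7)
The plan is to prove the equivalence by using the definition of $\simeq_d$ directly together with the fact that $\F_{SGP_d}$ is a filter. By symmetry it suffices to prove the forward direction: assume $p(n) \in GP'_d$ and deduce $q(n) \in GP'_d$, because if $q(n) = p(n) + h_1(n) + h_2(n) \pmod{\Z}$ with $h_1 \in GP_{d-1}$ and $h_2 \in \mathcal{W}_d$, then the same equation rewrites as $p(n) = q(n) + (-h_1(n)) + (-h_2(n)) \pmod{\Z}$, and $\mathcal{W}_d$ is closed under negation (being an $\R$-span), so $\simeq_d$ is symmetric.

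Fix $\ep>0$. Since $\|q(n)\| = \|p(n) + h_1(n) + h_2(n)\|$ by the definition of $\simeq_d$, the triangle inequality for $\|\cdot\|$ yields
\begin{equation*}
\{n\in\Z : \|q(n)\| < \ep\} \supset A_p \cap A_{h_1} \cap A_{h_2},
\end{equation*}
where $A_f = \{n\in\Z : f(n) \ (\text{mod}\ \Z) \in (-\ep/3, \ep/3)\}$. So it is enough to show that each of the three sets on the right belongs to $\F_{SGP_d}$, and then invoke the filter property of $\F_{SGP_d}$.

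The three memberships follow from three different inputs. First, $A_p \in \F_{SGP_d}$ directly by the assumption $p \in GP'_d$. Second, $h_1 \in GP_{d-1}$ and the standing hypothesis \eqref{firsta} give $A_{h_1} \in \F_{SGP_{d-1}}$; and since every special generalized polynomial of degree $\le d-1$ is also one of degree $\le d$ (the defining set of forms $L(n^{j_1}a_1,\ldots,n^{j_\ell}a_\ell)$ enlarges with $d$), one has $\F_{SGP_{d-1}} \subset \F_{SGP_d}$, hence $A_{h_1} \in \F_{SGP_d}$. Third, $h_2 \in \mathcal{W}_d$, so Lemma \ref{lem1-1} (again using \eqref{firsta}) gives $A_{h_2} \in \F_{SGP_{d-1}} \subset \F_{SGP_d}$.

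Combining these and using that $\F_{SGP_d}$ is a filter (closed under finite intersection), we conclude $\{n\in\Z : \|q(n)\| < \ep\} \in \F_{SGP_d}$ for every $\ep > 0$, i.e.\ $q \in GP'_d$. The only nontrivial ingredient is Lemma \ref{lem1-1}, which has already been proved; the rest is a clean triangle-inequality argument. I do not foresee any obstacle beyond bookkeeping, since the key work of handling $\mathcal{W}_d$-terms via \eqref{firsta} is already encapsulated in Lemma \ref{lem1-1}.
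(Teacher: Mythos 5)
Your proof is correct and takes essentially the same approach as the paper's; the paper compresses it to ``It follows from Lemma \ref{lem1-1} and the fact that $\F_{SGP_d}$ is a filter,'' and your write-up simply makes explicit the symmetry of $\simeq_d$, the triangle inequality for $\|\cdot\|$, the use of \eqref{firsta} on the $GP_{d-1}$-term, and the inclusion $\F_{SGP_{d-1}}\subset\F_{SGP_d}$, all of which are implicit in the paper's one-liner.
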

\begin{proof} It follows from Lemma \ref{lem1-1} and the fact that $\F_{SGP_d}$ is a filter.
\end{proof}






\subsection{$\F_{GP_d}=\F_{SGP_d}$}


\begin{thm}\label{gpsgp}$\F_{GP_d}=\F_{SGP_d}.$
\end{thm}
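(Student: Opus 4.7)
The inclusion $\F_{SGP_d}\subseteq\F_{GP_d}$ is immediate from $SGP_d\subseteq GP_d$, so the task reduces to the reverse inclusion. By Proposition \ref{useful}(3) this is equivalent to $GP'_d=GP_d$, which I plan to prove by induction on $d$.

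For the base case $d=1$, I would first observe that every element of $GP_1$ is a finite sum of ``chains'' of the form $c_0\lceil c_1\lceil c_2\lceil\cdots\lceil c_k n\rceil\cdots\rceil\rceil\rceil$, so by Proposition \ref{useful}(2) it is enough to handle a single chain $p$, which I would do by a secondary induction on the nesting depth $k$. The case $k=0$ is immediate since $p=c_0 n\in SGP_1$. For $k\ge 1$, write $p=c_0\lceil h\rceil$ with $h$ a chain of depth $k-1$; Lemma \ref{simple1} gives $p=c_0 h-c_0(h-\lceil h\rceil)$, and using $|h(n)-\lceil h(n)\rceil|=\|h(n)\|$ together with subadditivity of $\|\cdot\|$ one obtains
\[\{n\in\Z:\|p(n)\|<\epsilon\}\supseteq\{n\in\Z:\|c_0h(n)\|<\epsilon/2\}\cap\{n\in\Z:\|h(n)\|<\epsilon/(2|c_0|+1)\}.\]
Both $c_0h$ and $h$ are chains of depth $k-1$, so each set on the right belongs to $\F_{SGP_1}$ by the secondary induction, and hence so does their intersection.

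For the inductive step $d\ge 2$, I would assume $\F_{GP_{d-1}}=\F_{SGP_{d-1}}$, i.e.\ hypothesis \eqref{firsta}, so that Lemmas \ref{lem1-1}, \ref{lem8-13}, and \ref{lem-8-equi} are all at my disposal. It then suffices to show that every $p\in GP_d$ is $\simeq_d$-equivalent to a finite sum of $SGP_d$ elements: trivially $SGP_d\subseteq GP'_d$, $GP'_d$ is closed under finite sums by Proposition \ref{useful}(2), and Lemma \ref{lem-8-equi} transfers membership along $\simeq_d$. The reduction proceeds by structural induction on $p$. Polynomial terms $an^k$ with $k\le d$ are already $L(n^ka)\in SGP_d$; taking an outer $\lceil\cdot\rceil$ is trivial because $\lceil q\rceil$ is integer-valued and therefore $\simeq_d 0$; scalar multiplication and summation are absorbed by sum-closure after reducing the summands. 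The essential case is a base element $a_0 n^{p_0}\lceil f_1\rceil\cdots\lceil f_k\rceil$ with $f_j\in GP_{p_j}$ and $p_0+\sum_j p_j=d$. For this, Lemma \ref{lem8-13}(2) combined with the outer induction hypothesis allows me to assume each $f_j$ is already an $SGP_{p_j}$-chain; Lemma \ref{simple2} then expands $\lceil f_1\rceil\cdots\lceil f_k\rceil$ as an error $(-1)^k\prod(f_i-\lceil f_i\rceil)\in\mathcal{W}_d$ plus $2^k-1$ cross-terms, each containing at least one bare $f_i$; finally, a secondary induction on the number of outer ceilings (combined with repeated use of Lemma \ref{simple1} to reabsorb ``bare-times-ceiling'' combinations into chains) rewrites each cross-term as a sum of chains $L(n^{j_1}a_1,\ldots,n^{j_\ell}a_\ell)\in SGP_d$.

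The main obstacle I anticipate is sustaining the secondary induction on $k$. A naive application of Lemma \ref{simple3} to a single pair of adjacent ceilings inside the larger product destroys the invariant ``error lies in $\mathcal{W}_d$'', because $\mathcal{W}_d$ is not closed under multiplication by arbitrary $GP$ factors sitting outside. To bypass this, I would expand the \emph{entire} product of outer ceilings simultaneously via Lemma \ref{simple2}, and exploit the degree identity $p_0+\sum_j p_j=d$: whenever two bare factors $f_i$ and $f_j$ collide, their product $f_if_j$ lands in $GP_{d-1}$, where the outer induction hypothesis closes the argument; whenever a bare $f_i$ survives alongside ceilings of the remaining factors, Lemma \ref{simple1} together with an inner inductive step on the residual structure reassembles it into an SGP-chain. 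The careful bookkeeping of degrees, and of which factors remain under ceilings at each stage, is the technical heart of the argument.
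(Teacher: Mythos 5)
Your overall skeleton matches the paper's: reduce to showing $GP'_d=GP_d$ via Proposition~\ref{useful}(3), induct on $d$, and drive the reduction with Lemmas~\ref{simple1}, \ref{simple2}, \ref{simple3}, \ref{lem1-1}, \ref{lem8-13}, \ref{lem-8-equi}. However, there are two concrete gaps. First, the claim that every element of $GP_1$ is a finite sum of chains $c_0\lceil c_1\lceil\cdots\lceil c_k n\rceil\cdots\rceil\rceil$ is false: $GP_1$ is closed under applying $\lceil\,\cdot\,\rceil$ to \emph{arbitrary} elements, so for instance $\lceil c_1\lceil a n\rceil + c_2 n\rceil$ lies in $GP_1$ but is not a finite sum of chains. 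The paper instead stratifies $GP_1$ by the total number of $\lceil\,\cdot\,\rceil$'s appearing, and this stratification cannot be collapsed to a chain-depth induction.

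Second, and more seriously, your reformulated target for $d\ge 2$ --- ``every $p\in GP_d$ is $\simeq_d$-equivalent to a finite sum of $SGP_d$ elements'' --- is strictly stronger than what is needed ($p\in GP'_d$) and is, I believe, false. The relation $\simeq_d$ does not commute with scalar multiplication: $\lceil q\rceil\simeq_d 0$ because $\lceil q\rceil$ is integer-valued, but for $c\notin\Z$ one has $c\lceil q\rceil\not\simeq_d 0$, and when $q$ has degree $d$ the obstruction $c(q-\lceil q\rceil)$ lives neither in $GP_{d-1}$ nor in $\mathcal{W}_d$, so it cannot be absorbed. The paper does \emph{not} treat the one-ceiling case via $\simeq_d$: it uses Lemma~\ref{simple1} to write $c\lceil f_1\rceil = cf_1-c(f_1-\lceil f_1\rceil)$ and then a direct fractional-part estimate $\{n:\|{-c}(f_1(n)-\lceil f_1(n)\rceil)\|<\ep\}\supset\{n:\|f_1(n)\|<\ep/(1+|c|)\}$ to place the second summand in $GP'_d$. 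Your ``scalar multiplication and summation are absorbed by sum-closure'' glosses over exactly this point, and your structural induction has no mechanism to recover it. Finally, the remark that two bare factors $f_i,f_j$ give a product landing in $GP_{d-1}$ so that the outer induction closes is a misdiagnosis of what makes the reduction terminate: after the expansion of Lemma~\ref{simple2} or \ref{simple3}, terms with two or more bare factors have strictly fewer $\lceil\,\cdot\,\rceil$'s, not lower degree (the total degree $p_0+\sum p_j=d$ is preserved), and it is this drop in the ceiling count --- the paper's inner induction on $GP_d(k)$ --- that terminates the recursion.
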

\begin{proof} It is easy to see that $\F_{SGP_d}\subset \F_{GP_d}.$
So it remains to show $\F_{GP_d}\subset \F_{SGP_d}.$ That is, if
$A\in\F_{GP_d}$ then there is $A'\in\F_{SGP_d}$ with $A\supset A'$.
We will use induction to show the proposition.

Assume first $d=1$. In this case we let $GP_1(0)=\{g_a:a\in\R\}$,
where $g_a(n)=an$ for each $n\in\Z$. Inductively if
$GP_1(0),\ldots,GP_1(k)$ have been defined then $f\in GP_1(k+1)$ if
and only if $f\in GP_1\setminus  (\bigcup_{j=0}^kGP_1(j))$ and there
are $k+1$ $\lceil {\ \ } \rceil$ in $f$. It is clear that
$GP_1=\cup_{k=1}^\infty GP_1(k)$. If $f\in GP_1(0)$ then it is clear
that $f\in GP'_1$. Assume that $GP_1(0), \ldots, GP_1(k)\subset
GP'_1$ for some $k\in \mathbb{Z}_+$.

Let $f\in GP_1(k+1)$. We are going to show that $f\in GP_1'$. If
$f=f_1+f_2$ with $f_1,f_2\in \bigcup_{i=0}^k GP_1(i)$, then by the
above assumption and Proposition \ref{useful} we conclude that $f\in
GP_1'$. The remaining case is $f=c\lceil f_1 \rceil +f_2$ with $c\in
\mathbb{R}\setminus \{0\}$, $f_1\in GP_1(k)$,  and $f_2\in GP_1(0)$.
By Proposition \ref{useful} and the fact $GP_1(0)\subseteq GP_1'$,
$f\in GP_1'$ if and only if $c \lceil{f_1}\rceil \in GP_1'$. So it
remains to show $c\lceil f_1 \rceil\in GP'_1$. By Lemma
\ref{simple1} we have $c\lceil f_1\rceil=cf_1-c(f_1-\lceil f_1
\rceil)$. It is clear that $cf_1\in GP_1(k)\subset GP_1'$ since
$f_1\in GP_1(k)\subset GP_1'$. For any $\ep>0$ since
\begin{equation*}
\{n\in\Z:||-c(f_1(n)-\lceil{f_1(n)}\rceil)||<\ep\}\supset
\Big\{n\in\Z: ||f_1(n)||<\frac{\ep}{1+|c|}\Big\},
\end{equation*}
it implies that $-c(f_1-\lceil{f_1}\rceil)\in GP_1'$. By
Proposition \ref{useful} again we conclude that
$c\lceil{f_1}\rceil\in GP'_1$. Hence $f\in GP_1'$.
Thus $GP_1\subseteq GP_1'$ and we
are done for the case $d=1$ by Proposition \ref{useful} (3).

\medskip
Assume that we have proved $\F_{GP_{d-1}}\subset \F_{SGP_{d-1}}\
d\ge 2$, i.e. the assumption (\ref{firsta}) holds.
We define $GP_d(k)$ with $k=0,1,2,\ldots.$ First $f\in GP_d(0)$ if
and only if there is no $\lceil {\  \ } \rceil$ in $f$, i.e. $f$ is
the usual polynomial of degree $\le d$.  Inductively if
$GP_d(0),\ldots,GP_d(k)$ have been defined then $f\in GP_{k+1}$ if
and only if $f\in GP_d\setminus  (\bigcup_{j=0}^kGP_d(j))$ and there are $k+1$
$\lceil {\ \ } \rceil$ in $f$. It is clear that
$GP_d=\cup_{k=0}^\infty GP_d(k)$. We now show $GP_d(k)\subseteq GP_d'$ by induction on $k$.

Let $f$ be a usual polynomial of degree $\le d$. Then
$f(n)=a_0n^d+f_1(n)\simeq_d a_0n^d$ with $f_1\in GP_{d-1}$. By Lemma \ref{lem-8-equi},
$f\in GP_d'$ since $a_0n^d\in \text{SGP}_d\subset GP_d'$. This shows $GP_d(0)\subset GP_d'$.
Now assume that for some
$k\in \mathbb{Z}_+$ we have proved
\begin{equation}\label{aye2}
\bigcup_{i=0}^k GP_d(i)\subseteq GP_d'.
\end{equation}
Let $f\in GP_d(k+1)$. We are going to show that $f\in GP_d'$. If
$f=f_1+f_2$ with $f_1,f_2\in \bigcup_{i=0}^k GP_d(i)$, then by the
assumption (\ref{aye2}) and Proposition \ref{useful} (2) we conclude
that $f\in GP_d'$. The remaining case is that $f$ can be expressed
as the sum of a function in $GP_d(0)$ and a function $g\in
GP_d(k+1)$ having the form of
\begin{enumerate}
\item $g=c\lceil{f_1}\rceil\ldots \lceil{f_l}\rceil$ with $c\neq 0$, $l\ge 1$ or

\item $g=g_1(n)\lceil{g_2(n)}\rceil\ldots \lceil{g_l(n)}\rceil$ for any $n\in\Z$ with
$g_1(n)\in SGP_r$ and $r<d$.
\end{enumerate}
Since $GP_d(0)\subset GP_d'$, $f\in GP_d'$ if and only if $g\in GP_d'$ by Proposition \ref{useful}.
It remains to show that $g\in GP_d'$. There are two cases.

\medskip
\noindent Case (1): $g=c\lceil{f_1}\rceil\ldots \lceil{f_l}\rceil$ with $c\neq 0$, $l\ge 1$.

If $l=1$, then $g=c\lceil{f_1}\rceil$ with $f_1\in GP_d(k)$.
By Lemma \ref{simple1} we have
$c\lceil f_1\rceil=cf_1-c(f_1-\lceil f_1 \rceil)$. It is
clear that $cf_1\in GP_d(k)\subset GP_d'$ since $f_1\in GP_d(k)\subset GP_d'$. For any $\ep>0$ since
\begin{equation*}
\{n\in\Z:||-c(f_1(n)-\lceil{f_1(n)}\rceil)||<\ep\}\supset
\Big\{n\in\Z: ||f_1(n)||<\frac{\ep}{1+|c|}\Big\},
\end{equation*}
it implies that $-c(f_1-\lceil{f_1}\rceil)\in GP_d'$. By
Proposition \ref{useful} again we conclude that
$g=c\lceil{f_1}\rceil\in GP'_d$.

If $l\ge 2$, using Lemmas \ref{simple2} and \ref{lem1-1} we get that
$$c\lceil{f_1}\rceil\ldots \lceil{f_l}\rceil\simeq_d -c(-1)^l\sum_{i_1,\ldots,i_l\in
\{1,*\}\atop{(i_1,\ldots,i_l)\not=(*,\ldots,*)}}f_{1}^{i_1}\ldots
f_{l}^{i_l}.$$
Since each term of the right side is in $GP_d(k)$,  $g\in GP_d'$  by Lemma \ref{lem-8-equi},
the assumption
(\ref{aye2}) and Proposition \ref{useful} (2).

\medskip

\noindent Case (2): $g=g_1(n)\lceil{g_2(n)}\rceil\ldots \lceil{g_l(n)}\rceil$ for any $n\in\Z$ with
$g_1\in SGP_r$ and $1\le r<d$.

In this case using Lemmas \ref{simple3} and
\ref{lem1-1} we get that
$$g_1\lceil{g_2}\rceil\ldots\lceil{g_l}\rceil\simeq_d (-1)^l\sum_{i_1,\ldots,i_l\in
\{1,*\}\atop{(i_1,\ldots,i_l)\not=(1,*,\ldots,*),(*,*,\ldots,*)}}g_{1}^{i_1}\ldots
g_{l}^{i_l}.$$ Assume $i_1,\ldots,i_l\in \{1,*\}$ with
$(i_1,\ldots,i_l)\not=(1,*,\ldots,*),(*,*,\ldots,*)$. If there are at least two $1$ appearing in $(i_1,i_2,\cdots,i_l)$, then $(-1)^\ell
g_{1}^{i_1}\ldots g_{l}^{i_\ell}\in \bigcup_{i=0}^k GP_d(i)$. Hence
$$(-1)^\ell g_{1}^{i_1}\ldots g_{l}^{i_\ell}\in GP_d'$$ by
the assumption (\ref{aye2}). The remaining situation is that $i_1=*$
and there is exact one 1 appearing in $(i_2, \ldots, i_l)$. In this
case,
$(-1)^\ell g_{1}^{i_1}\ldots g_{l}^{i_\ell}\in GP_d(k+1)$ is the
finite sum of the forms $a_1n^{t_1}\lceil{h_1(n)}\rceil \ldots
\lceil{h_{l_1'}(n)}\rceil$ with $t_1\ge 1$ and $h_1(n)=g_1(n)$; or
the forms $c\lceil{h_l}\rceil \ldots \lceil{h_{l_1}}\rceil$ or terms
in $GP_d'$.

If the term has the form $a_1n^{t_1}\lceil{h_1(n)}\rceil \ldots
\lceil{h_{l_1'}(n)}\rceil$ with $t_1\ge 1$ and $h_1(n)=g_1(n)$, we
let
$g_1^{(1)}(n)=a_1n^{t_1}\lceil{h_1(n)}\rceil=a_1n^{t_1}\lceil{g_1(n)}\rceil\in
SGP_{r_1}$. It is clear $d\ge r_1>r$. If $r_1=d$, then
$a_1n^{t_1}\lceil{h_1(n)}\rceil \ldots
\lceil{h_{l_1'}(n)}\rceil=g_1^1(n)\in GP_d'$ since $SGP_d\subset GP_d'$. If $r_1<d$,
then we write
$$a_1n^{t_1}\lceil{h_1(n)}\rceil\ldots
\lceil{h_{l_1'}(n)}\rceil=g_1^{(1)}(n)\lceil{g_2^{(1)}(n)}\rceil\ldots\lceil{g_{l_1}^{(1)}(n)}\rceil.$$
By using  Case (1) we conclude that

$g\simeq_d$ finite sum of the forms
$g_1^{(1)}(n)\lceil{g_2^{(1)}(n)}\rceil\ldots\lceil{g_{l_1}^{(1)}(n)}\rceil$
and terms in $GP_d'$.

\medskip
Repeating the above process finitely many time (at most $d$-times)
we get that $g\simeq_d$ finite sum of terms in $GP_d'$. Thus $g\in GP_d'$ by Lemma \ref{lem-8-equi}
and Proposition \ref{useful} (2). The proof is now finished.
\end{proof}

\medskip




\section{Proof of Theorem B(1)} \label{section-proof1}

In this section, we will prove Theorem B(1), i.e. we will show that
if $A\in \F_{d,0}$ then there are $k\in\N$, $P_i\in GP_d$ ($1\le
i\le k$) and $\ep_i>0$ such that
$$A\supset \bigcap_{i=1}^k\{n\in\Z:P_i(n)\ (\text{mod}\ \Z)\in (-\ep_i,\ep_i) \}.$$

We remark that by Section \ref{reduction}, it is sufficient to
consider the case when the group $G$ is a connected,
simply-connected $d$-step nilpotent Lie group.

\subsection{Notations}

Let $X=G/\Gamma$ with $G$ a connected, simply-connected $d$-step
nilpotent Lie group, $\Gamma$ a uniform subgroup. Let $T:X\lra X$ be
the nilrotation induced by $a\in G$.

\medskip
We assume $\mathfrak{g}$ is the Lie algebra of $G$ over $\R$, and
$\exp: \g \lra G$ is the exponential map. Consider
$$\g=\g^{(1)}\supsetneq\g^{(2)}\supsetneq \ldots\supsetneq \g^{(d)} \supsetneq\g^{(d+1)}=\{0\}.$$
We note that $$[\g^{(i)},\g^{(j)}]\subset \g^{(i+j)}.$$ There is a
Mal'cev basis $\mathcal {X}= \{X_1, \ldots ,X_m\}$ for $\g$ with

\begin{enumerate}
\item  For each $j = 0,\ldots, m-1$ the subspace $\eta_j:=\text{Span}(X_{j+1}, \ldots
,X_m)$ is a Lie algebra ideal in $\g$, and hence $H_j := \exp\
\eta_j$ is a normal Lie subgroup of $G$.

\item For every $0<i<d$ we have $G_i = H_{l_{i-1}+1}$.

\item Each $g\in G$ can be written uniquely as
$\exp(t_1X_1) \exp(t_2X_2)\ldots \exp(t_mX_m)$, for $t_i\in \R$.

\item $\Gamma$ consists precisely of those elements which, when written in
the above form, have all $t_i\in \Z$,
\end{enumerate}
where $G=G_1$, $G_{i+1}=[G_i,G]$ with $G_{d+1}=\{e\}$. Thus, there
are $0=l_0<l_1<\ldots<l_{d-1}<l_d=m$ such that
$\text{Span}-\{X_{l_i+1},\ldots,X_{m}\}=\g^{(i+1)}$ for
$i=0,1,\ldots,d-1$.

\begin{de}
Define $o(0)=0$ and $o(i)=j$ if $l_{j-1}+1\le i\le l_j, 2\le j\le
d-1$.
\end{de}


\subsection{Some lemmas}

We need several lemmas. Note that if
$$\exp(t_1X_1)\ldots\exp(t_mX_m)=\exp(u_1X_1+\ldots+u_mX_m)$$ it is
known that \cite{CG, M} each $t_i$ is a polynomial of $u_1,\ldots,
u_m$ and each $u_i$ is a polynomial of $t_1,\ldots, t_m$. For our
purpose we need to know the precise degree of the polynomials.

\begin{lem}\label{d-step} Let $\{X_1,\ldots,X_m\}$ be a Mal'cev bases for
$G/\Gamma$. Assume that
$$\exp(t_1X_1)\ldots\exp(t_mX_m)=\exp(u_1X_1+\ldots+u_mX_m).$$
Then we have

\begin{enumerate}

\item
$u_i=t_i$ for $1\le i\le l_1$ and if $l_{j-1}+1\le i\le l_j,\ 2\le
j\le d$ then
$$u_i=t_i+\sum_{k_1o(1)+\ldots+ k_{m}o(m)\le o(i),\atop
{k_0\le m-2,\ 0\le k_1,\ldots,k_{m}\le m}} c_{k_1,\ldots,
k_m,i}t_{1}^{k_1}\ldots t_{m}^{k_m},$$ where $k_0$ is the number of
$0's$ appearing in $\{k_1,\ldots, k_m\}$.

\item $t_i=u_i$ for $1\le i\le l_1$ and if $l_{j-1}+1\le i\le l_j,\ 2\le j\le d$ then
$$t_i=u_i+\sum_{k_1o(1)+\ldots+ k_{m}o(m)\le o(i),\atop
{k_0\le m-2,\ 0\le k_1,\ldots,k_{m}\le m}}d_{k_1,\ldots,
k_m,i}u_1^{k_1}\ldots u_m^{k_m},$$ where $k_0$ is the number of
$0's$ appearing in $\{k_1,\ldots, k_m\}$.

\end{enumerate}
\end{lem}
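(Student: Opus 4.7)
The plan is to derive both parts from the Campbell--Baker--Hausdorff (CBH) formula, applied iteratively, while tracking polynomial degrees with respect to the weight function $o(\cdot)$. The key structural fact I shall exploit is that the given Mal'cev basis is adapted to the filtration: the identity $\text{Span}\{X_{l_i+1},\ldots,X_m\}=\g^{(i+1)}$ says exactly that $\g^{(s)}=\text{Span}\{X_j:o(j)\ge s\}$, and combining this with the inclusion $[\g^{(a)},\g^{(b)}]\subseteq \g^{(a+b)}$ shows that every iterated commutator $[X_{i_1},[X_{i_2},\ldots,X_{i_k}]\cdots]$ lies in $\g^{(o(i_1)+\cdots+o(i_k))}$.

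For part (1), iteratively applying CBH to compress $\exp(t_1X_1)\exp(t_2X_2)\cdots\exp(t_mX_m)$ into a single exponential yields a Lie algebra element which is a polynomial in $t_1,\ldots,t_m$ whose monomials $c\cdot t_1^{k_1}\cdots t_m^{k_m}$ multiply iterated commutators in which the index $j$ occurs exactly $k_j$ times. By the structural fact above, any such commutator lies in $\g^{(k_1 o(1)+\cdots+k_m o(m))}$, so when expanded in the Mal'cev basis its $X_i$-coefficient vanishes unless $o(i)\ge k_1 o(1)+\cdots+k_m o(m)$, giving the weighted-degree constraint in the formula. The length-one term of CBH contributes exactly $\sum_i t_i X_i$, which both accounts for the explicit $t_i$ summand and forces $u_i=t_i$ whenever $o(i)=1$, since any nonlinear monomial would require $k_1+\cdots+k_m\ge 2$ combined with $k_1 o(1)+\cdots+k_m o(m)\le 1$, which is impossible as each $o(j)\ge 1$.

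The auxiliary constraints $0\le k_j\le m$ and $k_0\le m-2$ are read off from the same expansion. Since $G$ is $d$-step nilpotent, every iterated commutator of length exceeding $d$ vanishes, so $k_1+\cdots+k_m\le d\le m$ and in particular each $k_j\le m$. The bound $k_0\le m-2$, meaning that at least two distinct generators occur in the commutator, holds because any iterated commutator built from a single generator $X_j$ is already zero after the first bracket $[X_j,X_j]=0$; hence no monomial $c\,t_j^k$ with exactly one nonzero exponent can appear in the nonlinear part.

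For part (2) I shall invert the upper-triangular polynomial system from part (1) by induction on $o(i)$. For $o(i)=1$ one has $t_i=u_i$ outright. Assuming that for every $j$ with $o(j)<o(i)$ the component $t_j$ has been expressed as $u_j$ plus a polynomial in the $u$'s satisfying the same constraints, one writes $t_i=u_i-P_i(t)$ where $P_i$ is the nonlinear part supplied by part (1), and substitutes the already-established expressions for the relevant $t_j$'s. The principal technical obstacle, and the step I expect to be the most tedious, is verifying that both the weighted-degree bound and the ``at least two nonzero exponents'' condition propagate through the substitution; this reduces to two observations, namely that the $o$-weight is additive under multiplication (so substituting a weight-$\le o(j)$ expression for $u_j$ does not raise the total weight of a monomial), and that substituting $u_j+Q_j(u)$ for $t_j$ into a monomial already containing at least two distinct $t$-factors produces, term by term, monomials in the $u$'s still exhibiting at least two distinct nonzero exponents.
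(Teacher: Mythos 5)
Your proposal is correct and follows essentially the same route as the paper's own argument: part (1) is proved by iterating the CBH formula, tracking the weight $o(\cdot)$ via $[\g^{(a)},\g^{(b)}]\subseteq\g^{(a+j)}$ and the adapted Mal'cev basis, and part (2) is obtained by inverting the resulting upper-triangular polynomial system by induction, using exactly the crucial observation that the constraints $\sum k_p o(p)\le o(i)$ and $k_0\le m-2$ together force every $t_p$ appearing in the nonlinear part of $u_i$ to have $o(p)<o(i)$. Your choice to induct on the weight $o(i)$ rather than on the index $i$ as the paper does is a harmless variant, and your explicit verification that both the weight bound and the two-distinct-factors condition survive substitution is a correct unpacking of what the paper condenses into ``rearranging the coefficients\ldots $k_0\le m-2$ is satisfied automatically.''
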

\begin{proof}
(1). It is easy to see that if $m=1$ then $d=1$ and (1) holds. So we
may assume that $m\ge 2$. For $s\in
\{0,1,\ldots,m\}^{\{1,\ldots,m\}}$, let $\{i_1<\ldots<i_n\}$ be the
collection of $p's$ with $s(p)\not=0$. Let
$$X_{s}=[X_{s(i_1)}, [X_{s(i_2)}, \ldots, [X_{s(i_{n-1})},X_{s(i_n)}]]].$$ For
each $0\le p\le m$ let $k_p(s)$ be the number of $p's$ appearing in
$s$ (as usual, the cardinality of the empty set is defined as $0$).
Using the CBH formula $m-1$ times and the condition
$\g^{(d+1)}=\{0\}$ it is easy to see that $(t_1X_1)*\ldots*(t_mX_m)$
is the sum of $\sum_{i=1}^mt_iX_i$ and the terms $$constant \times
t_{q_1}\ldots t_{q_n}[X_{q_1}, [X_{q_2}, \ldots,
[X_{q_{n-1}},X_{q_n}]]],\ m\ge n\ge 2,$$ i.e.
$\exp(t_1X_1)\ldots\exp(t_mX_m)$ can be written as
$$\exp(\sum_{j=1}^mt_jX_j+\sum_{s\in \{0,1,\ldots,m\}^{\{1,\ldots,m\}}
\atop{k_0(s)\le m-2}}c'_{s}t_1^{k_1(s)}\ldots t_m^{k_m(s)}X_s).$$
Note that $X_{s}\subset \g^{(\sum_{j=1}^m k_j(s)o(j))}.$ Let
$X_s=\sum_{j=1}^m c'_{s,j}X_j$. Thus, $c'_{s,1}, \ldots, c_{s,i}'=0$
if $\sum_{j=1}^m k_j(s)o(j)>o(i)$. Thus, $u_i=t_i$ for $1\le i\le
l_1$ and if $l_{j-1}+1\le i\le l_j,\ 2\le j\le d$ then the
coefficient of $X_i$ is
$$u_i=t_i+\sum_{k_1o(1)+\ldots+ k_{m}o(m)\le o(i),\atop
{k_0\le m-2,\ 0\le k_1,\ldots,k_{m}\le m}} c_{k_1,\ldots,
k_m,i}t_{1}^{k_1}\ldots t_{m}^{k_m}.$$

Note that when ${k_1o(1)+\ldots+ k_{m}o(m)}\le o(i)$ and $k_0\le
m-2$, we have that $k_i=k_{i+1}=\ldots=k_m=0$ and some other
restrictions. For example, when $l_1+1\le i\le l_2$,
$t_{1}^{k_1}\ldots t_{m}^{k_m}=t_{i_1}t_{i_2}$ with $1\le i_1,i_2\le
l_1$; and when $l_2+1\le i\le l_3$, $t_{1}^{k_1}\ldots
t_{m}^{k_m}=t_{i_1}t_{i_2}t_{i_3}$ with $1\le i_1,i_2,i_3\le l_1$ or
$t_{i_1}t_{i_2}$ with $1\le i_1\le l_1$ and $l_1+1\le i_2\le l_2$.

\medskip

(2) It is easy to see that $t_i=u_i$ for $1\le i\le l_1$.
If $d=1$ (2) holds, and thus we assume that $d\ge 2$. We show (2) by
induction. We assume that
\begin{equation}\label{tp-eq}
t_p=u_p+\sum_{k_1'o(1)+\ldots+ k_{m}'o(m)\le o(p),\atop
{k_0'\le m-2,\ 0\le k_1',\ldots,k_{m}'\le m}}d_{k_1',\ldots,
k_m',p}u_1^{k_1'}\ldots u_m^{k_m'},
\end{equation}
 for all $p$ with $l_1+1\le
p\le i.$

Since
$$u_{i+1}=t_{i+1}+\sum_{k_1o(1)+\ldots+ k_{m}o(m)\le o(i+1),\atop
{k_0\le m-2,\ 0\le k_1,\ldots,k_{m}\le m}} c_{k_1,\ldots,
k_m,i+1}t_{1}^{k_1}\ldots t_{m}^{k_m},$$ we have that

$$t_{i+1}=u_{i+1}-\sum_{k_1o(1)+\ldots+ k_{m}o(m)\le o(i+1),\atop
{k_0\le m-2,\ 0\le k_1,\ldots,k_{m}\le m}} c_{k_1,\ldots,
k_m,i+1}t_{1}^{k_1}\ldots t_{m}^{k_m}.$$

Since $o(i+1)\le o(i)+1$ and $k_0\le m-2$ we have that if
$k_1o(1)+\ldots+ k_{m}o(m)\le o(i+1)$ then $k_po(p)\le o(i)$ for
each $1\le p\le m$, which implies that $k_{i+1},\ldots,k_m=0$. By
the induction each $t_p$ ($1\le p\le i$) is a polynomial of
$u_1,\ldots,u_m$ of degree at most $\sum_{j=1}^mk_j'\le o(p)$(see
Equation \eqref{tp-eq}) thus
$$\sum \limits_{k_1o(1)+\ldots+ k_{m}o(m)\le o(i+1),\atop {k_0\le m-2,\ 0\le
k_1,\ldots,k_{m}\le m}} c_{k_1,\ldots, k_m,i+1}t_{1}^{k_1}\ldots
t_{m}^{k_m}$$ is a polynomial of $u_1,\ldots,u_m$ of degree at most
$\sum_{p=1}^mk_p\le o(i+1).$  Rearranging the coefficients we
get (2). Note that $k_0\le m-2$ is satisfied automatically.
\end{proof}

\begin{lem}\label{product}
Assume that
$$x=\exp(x_1X_1+\cdots+x_mX_m)\ \text{ and}\
y=\exp(y_1X_1) \ldots \exp(y_mX_m).$$ Then
$$xy^{-1}=\exp(\sum_{i=1}^{l_1}(x_i-y_i)X_i+\sum_{i=l_1+1}^m((x_i-y_i)+P_{i,1}(\{y_p\})
+P_{i,2}(\{x_p\},\{y_p\}))X_i),$$ where $P_{i,1}(\{y_p\}),
P_{i,2}(\{x_p\},\{y_p\})$ are polynomials of degree at most $o(i).$
\end{lem}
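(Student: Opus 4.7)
The plan is to express $y$ in first-kind (canonical) coordinates via Lemma~\ref{d-step}, compute $xy^{-1}$ by the Campbell--Baker--Hausdorff formula, and then track the weighted degrees through the filtration $\g=\g^{(1)}\supset\cdots\supset\g^{(d+1)}=\{0\}$. Specifically, I first apply Lemma~\ref{d-step}(1) to rewrite $y=\exp(u_1X_1+\cdots+u_mX_m)$, where $u_i=y_i$ for $1\le i\le l_1$ and, for $l_1+1\le i\le m$,
$$u_i=y_i+R_i(\{y_p\}),\qquad \deg R_i\le o(i).$$
Then $y^{-1}=\exp(-u_1X_1-\cdots-u_mX_m)$, and with $X=\sum_p x_pX_p$, $Y=\sum_p u_pX_p$ one has $xy^{-1}=\exp(X*(-Y))$. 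Since $\g^{(d+1)}=\{0\}$, the CBH formula truncates to
$$X*(-Y)=X-Y+\sum_{\alpha}c_\alpha\,C_\alpha(X,Y),$$
where each $C_\alpha$ is a nested bracket containing at least one occurrence of $X$ and at least one of $Y$.

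Next, expanding $X$ and $Y$ in the basis and distributing brackets, each $C_\alpha$ becomes a sum of monomials of the form
$$(\mathrm{const})\cdot z_{p_1}z_{p_2}\cdots z_{p_k}\cdot [X_{p_1},[X_{p_2},\ldots,[X_{p_{k-1}},X_{p_k}]\cdots]],$$
with each $z_{p_\beta}\in\{x_{p_\beta},\pm u_{p_\beta}\}$ and at least one factor an $x_{p_\beta}$. Since $[\g^{(s)},\g^{(t)}]\subseteq\g^{(s+t)}$, the nested bracket lies in $\g^{(o(p_1)+\cdots+o(p_k))}$, so it can contribute to the coefficient of $X_i$ only when $o(p_1)+\cdots+o(p_k)\le o(i)$; in particular $k\le o(i)$. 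Consequently the coefficient of $X_i$ in $X*(-Y)$ equals $x_i-u_i=x_i-y_i$ when $1\le i\le l_1$, and equals
$$(x_i-u_i)+Q_i(\{x_p\},\{u_p\})\quad\text{for } l_1+1\le i\le m,$$
where $Q_i$ is a polynomial of degree at most $o(i)$ in $\{x_p,u_p\}$ every one of whose monomials contains at least one $x$-factor.

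Finally I substitute $u_p=y_p+R_p(\{y_q\})$, where $\deg R_p\le o(p)$. Each $u_p$-factor in a monomial of $Q_i$ contributes degree at most $o(p)$ in the $y$'s, so the monomial stays of total degree at most $o(p_1)+\cdots+o(p_k)\le o(i)$ in $\{x_p,y_p\}$; and $x_i-u_i=(x_i-y_i)-R_i(\{y_p\})$. Setting
$$P_{i,1}(\{y_p\}):=-R_i(\{y_p\}),\qquad P_{i,2}(\{x_p\},\{y_p\}):=Q_i\bigl(\{x_p\},\{u_p(\{y_q\})\}\bigr),$$
both of degree at most $o(i)$, yields the required expression for $xy^{-1}$. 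The main obstacle is the uniform degree bookkeeping: one has to combine the weighted bound of Lemma~\ref{d-step}(1) on the coordinate conversion with the bracket-weight inequality $o(p_1)+\cdots+o(p_k)\le o(i)$ furnished by the filtration, and check that the substitution $u_p\mapsto y_p+R_p(\{y_q\})$ preserves the $o(i)$ bound. Once this is in place, the rest is a direct computation inside the truncated CBH series.
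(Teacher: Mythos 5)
Your argument reproduces the paper's own proof: convert $y$ to exponential (first-kind) coordinates via Lemma~\ref{d-step}(1), take $y^{-1}=\exp(-\sum u_iX_i)$, apply the truncated CBH formula, and bound the coefficient of each $X_i$ using $[\g^{(s)},\g^{(t)}]\subseteq\g^{(s+t)}$ together with $\g^{(d+1)}=\{0\}$. The only difference is cosmetic—the paper substitutes $u_p=y_p+R_p(\{y_q\})$ implicitly and writes $P_{i,2}$ directly in $\{x_p\},\{y_p\}$, whereas you make that substitution and the resulting degree check explicit.
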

\begin{proof}
By Lemma \ref{d-step} we have
\begin{equation*}
\begin{aligned}xy^{-1}=\exp(X)\exp(Y)
\end{aligned}
\end{equation*}
where $X=\sum_{i=1}^{m}x_iX_i$ and
$$Y=-\sum_{i=1}^{m}y_iX_i-\sum_{i=l_1+1}^m(\sum_{k_1'o(1)+\ldots+
k_{m}'o(m)\le o(i),\atop {k_0'\le m-2,0\le k_1',\ldots,k_{m}'\le m}}
c_{k_1',\ldots, k_m',i}y_{1}^{k_1'}\ldots y_{m}^{k_m'})X_i.$$ Using
the CBH formula we get that
\begin{equation*}
\begin{aligned}
xy^{-1}&=\exp(X*Y)=\exp(X+Y+\frac{1}{2}[X,Y]+\frac{1}{12}[X,[X,Y]]+\cdots)\\
&= \exp(\sum_{i=1}^m(x_i-y_i)X_i+\sum_{i=l_1+1}^m (P_{i,1}(\{y_p\})+P_{i,2}(\{x_p\},\{y_p\}))X_i)\\
&=\exp(\sum_{i=1}^{l_1}(x_i-y_i)X_i+\sum_{i=l_1+1}^m((x_i-y_i)+P_{i,1}(\{y_p\})+
P_{i,2}(\{x_p\},\{y_p\}))X_i)
\end{aligned}
\end{equation*}
where
$$P_{i,1}(\{y_p\})=-\sum_{k_1'o(1)+\ldots+
k_{m}'o(m)\le o(i),\atop {k_0'\le m-2,0\le k_1',\ldots,k_{m}'\le m}}
c_{k_1',\ldots, k_m',i}y_{1}^{k_1'}\ldots y_{m}^{k_m'},$$ and
$$P_{i,2}(\{x_p\},\{y_p\})=\sum_{\sum_{j=1}^m (k_j+k_j')o(j)\le o(i),\atop{ {0\le k_1,\ldots,k_{m},
k_1',\ldots,k_{m}'\le m}\atop {k_0\le m-1,k_0'\le
m-1}}}e_{k_1,\ldots,k_m}^{k_1',\ldots,k_m'}x_{1}^{k_1}\ldots
x_{m}^{k_m}y_{1}^{k_1'}\ldots y_{m}^{k_m'}.$$

Note that the reason $P_{i,2}$ has the above form follows from the
fact that $[\g^{(i)},\g^{(j)}]\subset \g^{(i+j)}$,
$\g^{(d+1)}=\{0\}$ and a discussion similar to the one used in Lemma
\ref{d-step}.
\end{proof}

\medskip

\subsection{Proof of Theorem B(1)}
Let $X=G/\Gamma$ with $G$ a connected, simply-connected $d$-step
nilpotent Lie group, $\Gamma$ a uniform subgroup. Let $T:X\lra X$ be
the nilrotation induced by $a\in G$. Assume that $A\supset
N(x\Gamma,U)$ with $x\in X$, $x\Gamma\in U$ and $U\subset G/\Gamma$
open.  By Proposition \ref{replace} we may assume that $U$ is an
open neighborhood of $e\Gamma$ in $G/\Gamma$, where $e$ is the unit
element of $G$, i.e. $A\supset N(e\Gamma,U)$.

Assume that $a=\exp(a_1X_1+\cdots+a_mX_m)$, where $a_1,\cdots a_m\in
\mathbb{R}$. Then $$a^n=\exp(na_1X_1+\ldots+na_mX_m)$$ for any $n\in
\mathbb{Z}$. For $h=\exp(h_1X_1)\ldots \exp(h_mX_m)$, where
$h_1,\cdots,h_m\in \mathbb{R}$, write \begin{equation*}
\begin{aligned}
a^nh^{-1}=\exp(p_1X_1+\ldots+p_mX_m)=\exp(w_1X_1)\ldots\exp(w_mX_m).
\end{aligned}
\end{equation*}
Then by Lemma \ref{product} we have $p_i=na_i-h_i$ for $1\le i\le l_1$
and if $l_{j-1}+1\le i\le l_j,\ 2\le j\le d$ then
\begin{equation}\label{eq-5.2}
\begin{aligned}
p_i &=na_i-h_i+P_{i,1}(\{h_p\})+\sum_{\sum_{p=1}^m (k_p+k_p')o(p)\le
o(i),\atop {{0\le k_1,\ldots,k_{m}, k_1',\ldots,k_{m}'\le
m}\atop{k_0\le m-1,k_0'\le
m-1}}}e_{k_p,k_p'}n^{k_1+\ldots+k_m}h_{1}^{k_1'}\ldots h_{m}^{k_m'},
\end{aligned}
\end{equation}
where $P_{i,1}(\{h_p\})=-\sum\limits_{k_1'o(1)+\ldots+ k_{m}'o(m)\le
o(i),\atop {k_0'\le m-2,0\le k_1',\ldots,k_{m}'\le m}}
c_{k_1',\ldots, k_m',i}h_{1}^{k_1'}\ldots h_{m}^{k_m'}$ and
$e_{k_p,k_p'}=e_{k_1,\ldots,k_m}^{k_1',\ldots,k_m'}a_{1}^{k_1}\ldots
a_{m}^{k_m}$.

Changing the exponential coordinates to Mal'sev coordinates (Lemma
\ref{d-step}), we get that $w_i=na_i-h_i$ for $1\le i\le l_1$ and if
$l_{j-1}+1\le i\le l_j,\ 2\le j\le d$ then
\begin{equation*}
w_i=p_i+\sum_{k_1o(1)+\ldots+ k_{m}o(m)\le o(i),\atop {k_0\le m-2,\
0\le k_1,\ldots,k_{m}\le m}}d_{k_1,\ldots, k_m,i}p_1^{k_1}\ldots
p_m^{k_m},
\end{equation*}
in this case using \eqref{eq-5.2} it is not hard to see that $w_i$
is the sum of $-h_i$ and $Q_i=Q_i(n,h_1,\ldots,h_{i-1})$ such that
$Q_i$ is the sum of terms
$$c(k, k_1,\ldots,k_{i-1})n^kh_1^{k_1}\ldots h_{i-1}^{k_{i-1}}$$
with $k+k_1o(1)+\ldots+k_{i-1}o(i-1)\le o(i)$ (see the argument of Lemma
\ref{d-step}(2)). Note that if $k=0$ then $k_0\le m-2$, and if
$k_1=\ldots=k_m=0$ then $k\ge 1$.

For a given $n\in \mathbb{Z}$, let $h_i(n)=\lceil{na_i}\rceil$ if
$1\le i\le l_1$, and let
$h_i(n)=\lceil{Q_i(n,{h_1(n)},\ldots,h_{i-1}(n))}\rceil$ if
$l_{j-1}+1\le i\le l_j,\ 2\le j\le d$. Again a similar argument as
in the proof of Lemma \ref{d-step}(2) shows that $h_i(n)$ is well
defined and is a generalized polynomial of degree of most $o(i)\le
d$. For example, if $l_1+1\le i\le l_2$ then
$$p_i=na_i-h_i+\sum_{1\le i_1<i_2\le
l_1}c(i_1,i_2,i)h_{i_1}h_{i_2}+\sum_{1\le j_1\le l_1}c(j_1,i)nh_{j_1}.$$
So
\begin{equation*}
\begin{aligned}w_i=na_i-h_i+\sum_{1\le i_1<i_2\le
l_1}c(i_1,i_2,i)h_{i_1}h_{i_2}&+\sum_{1\le j_1\le
l_1}c(j_1,i)nh_{j_1}+\sum_{1\le i_1<i_2\le
l_1}d(i_1,i_2,i)p_{i_1}p_{i_2}\\
=na_i-h_i+\sum_{1\le i_1<i_2\le
l_1}c(i_1,i_2,i)h_{i_1}h_{i_2}&+\sum_{1\le j_1\le
l_1}c(j_1,i)nh_{j_1}\\
&+\sum_{1\le i_1<i_2\le
l_1}d(i_1,i_2,i)(na_{i_1}-h_{i_1})(na_{i_2}-h_{i_2}).
\end{aligned}
\end{equation*}
Thus if we let $h_i(n)=\lceil{na_i}\rceil$, $1\le i\le l_1$ then if
$l_1+1\le i\le l_2$
\begin{equation*}
\begin{aligned}
h_i(n)=\lceil na_i&+\sum_{1\le i_1<i_2\le
l_1}c(i_1,i_2,i)\lceil na_{i_1}\rceil \lceil na_{i_2}\rceil+\sum_{1\le j_1\le l_1}c(j_1,i)n\lceil na_{j_1}\rceil\\
&+\sum_{1\le i_1<i_2\le l_1}d(i_1,i_2,i)(na_{i_1}-\lceil
na_{i_1}\rceil)(na_{i_2}-\lceil na_{i_2}\rceil) \rceil.
\end{aligned}
\end{equation*}
That is, $$h_i(n)=\lceil na_i+n^2a_i'+\sum_{1\le i_1<i_2\le
l_1}c'(i_1,i_2,i)\lceil na_{i_1}\rceil \lceil
na_{i_2}\rceil+\sum_{1\le j_1\le l_1}c'(j_1,i)n\lceil
na_{j_1}\rceil\rceil
$$ is a generalized polynomial of degree at most $2$ in $n$.

\medskip
Next we let
 $w_i(n)=na_i-h_i(n)=na_i-\lceil{na_i}\rceil$ for $1\le i\le l_1$ and
  if
$l_{j-1}+1\le i\le l_j,\ 2\le j\le d$, let
\begin{align*}
w_i(n)&=Q_i(n,{h_1(n)},\ldots,h_{i-1}(n))-h_i(n)\\
&=Q_i(n,{h_1(n)},\ldots,h_{i-1}(n))-\lceil{Q_i(n,{h_1(n)},\ldots,h_{i-1}(n))}\rceil.
\end{align*}
Since $Q_i(n,h_1(n),\ldots,h_{i-1}(n))$  is the sum of terms
$$c(k, k_1,\ldots,k_{i-1})n^kh_1^{k_1}(n)\ldots h_{i-1}^{k_{i-1}}(n)$$
with $k+k_1o(1)+\ldots+k_mo(m)\le o(i)$ and $h_i(n)$ is  a
generalized polynomial of degree of most $o(i)\le d$, we have
$w_i(n)$ is a generalized polynomial of degree of most $o(i)\le d$.


\medskip
Let $h(n)=\exp(h_1(n)X_1)\cdots \exp(h_m(n)X_m)$. Then $h(n)\in
\Gamma$ and $a^nh(n)^{-1}=\exp(w_1(n)X_1)\cdots \exp(w_m(n)X_m)$.
Choose $0<\ep<<\frac{1}{2}$ such that
$$\{g\Gamma: \rho (g\Gamma,e\Gamma)<\ep\}\subset U.$$ Then
$$A\supset N(e\Gamma,U)\supset \{n\in\Z:\rho (a^n\Gamma,e\Gamma)<\ep\}.$$

We get that (see Definition \ref{gtao})
\begin{equation*}
\rho(a^n\Gamma,e\Gamma)\le \rho(a^nh(n)^{-1},e)\le \max_{1\le k\le
m}\{||w_k(n)||\}.
\end{equation*}

So if $n\in \bigcap_{i=1}^m\{n\in\Z:w_i(n)\ (\text{mod}\ \Z)\in
(-\ep,\ep) \}$ then $\rho(a^n\Gamma,e\Gamma)<\ep$ which implies that
$n\in N(e\Gamma,U)\subset A$. That is,
$$A\supset \bigcap_{i=1}^m\{n\in\Z:w_i(n)\ (\text{mod}\ \Z)\in (-\ep,\ep) \}.$$
This ends the proof of Theorem B(1).

\section{Proof of Theorem B(2)}
\label{section-proof2}

In this section, we aim to prove Theorem B(2), i.e.
$\F_{d,0}\supset\F_{GP_d}$.
To do this first we make some preparations, then derive some results
under the inductive assumption, and finally give the proof. Note
that in the construction the nilpotent matrix Lie group is used.


More precisely, to show $\F_{d,0}\supset\F_{GP_d}$ we need only to
prove $\F_{d,0}\supset\F_{SGP_d}$ by Theorem B. To do this, for a
given $F\in \F_{SGP_d}$ we need to find a $d$-step nilsystem
$(X,T)$, $x_0\in X$ and a neighborhood $U$ of $x_0$ such that
$F\supset N(x_0,U)$. In the process doing this, we find that it is
convenient to consider a finite sum of specially generalized
polynomials $P(n;\alpha_1,\ldots,\alpha_r)$ (defined in
(\ref{key-poly})) instead of considering a single specially
generalized polynomial. We can prove that $\F_{d,0}\supset\F_{GP_d}$
if and only if $\{ n\in \mathbb{Z}:
||P(n;\alpha_1,\cdots,\alpha_d)||<\epsilon\}\in \mathcal{F}_{d,0}$
for any $\alpha_1,\cdots,\alpha_d\in \mathbb{R}$ and $\epsilon>0$
(Theorem \ref{lem-trans}). We choose $(X,T)$ as the closure of the
orbit of $\Gamma$ in $\G_d/\Gamma$ (the nilrotation is induced by a
matrix $A\in \G_d$), and consider the most right-corner entry
$z_1^d(n)$ in $A^nB_n$ with $B_n\in\Gamma$. We finish the proof by
showing that $P(n;\alpha_1,\cdots,\alpha_d)\simeq_d z_1^d(n)$ and
$\{n\in\Z: ||z_1^d(n)||<\ep\}\in \F_{d,0}$ for any $\ep>0$.

\subsection{Some preparations}

For a matrix $A$ in $\G_d$ we now give a precise formula of $A^n$.
\begin{lem} \label{lem-8-ite} Let ${\bf x}=(x_i^k)_{1\le k\le d, 1\le i\le
d-k+1}\in \mathbb{R}^{d(d+1)/2}$. For $n\in \mathbb{N}$, assume that
${\bf x}(n)=(x_i^k(n))_{1\le k\le d, 1\le i\le d-k+1}\in
\mathbb{R}^{d(d+1)/2}$ satisfies $\M({\bf x}(n))=\M({\bf x})^n$,
then
\begin{equation}\label{8-11-0}
x_i^k(n)=\tbinom{n}{1}P_1({\bf x};i,k)+\tbinom{n}{2}P_2({\bf
x};i,k)+\cdots+\tbinom{n}{k} P_k({\bf x};i,k)
\end{equation}
for $1\le k\le d$ and $1\le i\le d-k+1$, where
$\tbinom{n}{k}=\frac{n(n-1)\cdots (n-k+1)}{k!}$ for $n,k\in
\mathbb{N}$ and
$$P_\ell({\bf x};i,k)=\sum\limits_{(s_1,s_2,\cdots,s_{\ell})\in \{1,2,\cdots,k\}^\ell
\atop s_1+s_2+\cdots+s_\ell=k}x_i^{s_1}x_{i+s_1}^{s_2}
x_{i+s_1+s_2}^{s_3}\cdots x_{i+s_1+s_2+\cdots+s_{\ell-1}}^{s_\ell}$$
for $1\le k\le d$, $1\le i\le d-k+1$ and $1\le \ell \le k$.
\end{lem}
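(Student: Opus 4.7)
The plan is to exploit the fact that $\M(\mathbf{x})$ is unipotent upper triangular, so that $A := \M(\mathbf{x})$ decomposes as $A = I + N$ where $N$ is the strictly upper triangular matrix with $N_{i,i+k} = x_i^k$ for $1 \le k \le d$ and $1 \le i \le d-k+1$, and $N_{i,j}=0$ otherwise. Since $A$ has size $(d+1)\times(d+1)$ and $N$ is strictly upper triangular, $N^{d+1}=0$. Because $I$ commutes with $N$, the classical binomial theorem applies to give
\begin{equation*}
A^n \;=\; (I+N)^n \;=\; \sum_{\ell=0}^{d} \binom{n}{\ell}\, N^{\ell}.
\end{equation*}
Therefore to establish \eqref{8-11-0} it suffices to show that the $(i,i+k)$-entry of $N^{\ell}$ equals $P_\ell(\mathbf{x};i,k)$ (and, automatically, that this entry vanishes when $\ell > k$, which accounts for the truncation of the sum at $\ell = k$).

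Next I would unfold $N^{\ell}$ entry-wise: by definition of matrix multiplication,
\begin{equation*}
(N^{\ell})_{i,\,i+k} \;=\; \sum_{i=i_0<i_1<\cdots<i_\ell=i+k} N_{i_0,i_1}\,N_{i_1,i_2}\cdots N_{i_{\ell-1},i_\ell},
\end{equation*}
where the strict inequalities come from the fact that $N$ is strictly upper triangular. Setting $s_t := i_t - i_{t-1}$, the chain $i_0<i_1<\cdots<i_\ell$ corresponds bijectively to a composition $(s_1,\dots,s_\ell) \in \{1,2,\dots,k\}^{\ell}$ with $s_1+s_2+\cdots+s_\ell = k$, and the product $N_{i_0,i_1}\cdots N_{i_{\ell-1},i_\ell}$ is precisely
\begin{equation*}
x_i^{s_1}\,x_{i+s_1}^{s_2}\,x_{i+s_1+s_2}^{s_3}\cdots x_{i+s_1+\cdots+s_{\ell-1}}^{s_\ell},
\end{equation*}
which is the summand defining $P_\ell(\mathbf{x};i,k)$. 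In particular, when $\ell > k$ no such composition exists, so $(N^\ell)_{i,i+k}=0$; and when $\ell=0$, this entry is $0$ as well (since $k \ge 1$). Assembling these facts in the binomial expansion above yields \eqref{8-11-0}.

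There is no serious obstacle; the only thing to watch is index bookkeeping and a clean identification of compositions of $k$ into $\ell$ positive parts with lattice paths from column $i$ to column $i+k$ of length $\ell$. One can either regard the computation as a purely combinatorial identity in the matrix algebra, or alternatively give an inductive proof on $n$: the recursion $\binom{n+1}{\ell} = \binom{n}{\ell} + \binom{n}{\ell-1}$ matches exactly the recursion obtained from $A^{n+1}=A^n\cdot A$ when the $(i,i+k)$-entry is expanded via \eqref{sec8-4-eq-1}. Either route gives the statement.
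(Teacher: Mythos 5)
Your proof is correct, and it takes a genuinely different route from the paper's. The paper proves the lemma by a double induction: an outer induction on $k$ and, inside it, an induction on $n$, starting from the multiplicative recursion $x_i^k(m+1)=\sum_{j=0}^k x_i^{k-j}(m)\,x_{i+k-j}^j$ obtained from $\M(\mathbf{x})^{m+1}=\M(\mathbf{x})^m\M(\mathbf{x})$ and then reassembling the result with the Pascal identity $\tbinom{m}{r}+\tbinom{m}{r-1}=\tbinom{m+1}{r}$. You instead observe at the outset that $\M(\mathbf{x})=I+N$ with $N$ strictly upper triangular (hence nilpotent, $N^{d+1}=0$, and commuting with $I$), so the binomial theorem in the matrix algebra gives $\M(\mathbf{x})^n=\sum_{\ell=0}^d\tbinom{n}{\ell}N^\ell$ at once, and the whole lemma reduces to the combinatorial identification of $(N^\ell)_{i,i+k}$ with $P_\ell(\mathbf{x};i,k)$ by expanding the matrix power as a sum over strictly increasing index chains and re-coding these as compositions of $k$ into $\ell$ positive parts. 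Your route is shorter and more structural: it makes the appearance of the binomial coefficients conceptually transparent and eliminates both inductions in one stroke, whereas the paper's route is more hands-on and stays entirely within elementary manipulation of the entry recursion without invoking the $(I+N)^n$ expansion. Both arguments are complete; the identification $(N^\ell)_{i,i+k}=P_\ell(\mathbf{x};i,k)$ that you sketch does hold exactly (the constraint $s_t\in\{1,\dots,k\}$ is automatic once each $s_t\ge1$ and they sum to $k$), and the vanishing of $(N^\ell)_{i,i+k}$ for $\ell>k$ correctly explains why the sum terminates at $\ell=k$.
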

\begin{proof} Let $x_i^0=1$ and $x_i^0(m)=1$ for $1\le i\le d$ and
$m\in \mathbb{N}$. By \eqref{sec8-4-eq-1}, it is not hard to see
that
\begin{equation}\label{sec8-4-eq-2}
x_i^k(m+1)=\sum \limits_{j=0}^k x_i^{k-j}(m)\cdot x_{i+k-j}^j
\end{equation}
for $1\le k\le d$, $1\le i\le d-k+1$ and $m\in \mathbb{N}$.

Now we do induction for $k$. When $k=1$, $x_i^1(1)=x_i^1$ and
$x_i^1(m+1)=x_i^1(m)+x_i^1$ for $m\in \mathbb{N}$ by
\eqref{sec8-4-eq-2}. Hence $x_i^1(n)=nx_i^1=\tbinom{n}{1}P_1({\bf
x};i,1)$. That is, \eqref{8-11-0} holds for each $1\le i\le d$ and
$n\in \mathbb{N}$ if $k=1$.

Assume that $1\le \ell \le d-1$, and \eqref{8-11-0} holds for each
$1\le k\le \ell$, $1\le i\le d-k+1$ and $n\in \mathbb{N}$. For
$k=\ell+1$, we make induction on $n$. When $n=1$ it is clear
$$x_i^k(1)=x_i^k=\tbinom{1}{1}P_1({\bf x};i,k)+\tbinom{1}{2}P_2({\bf
x};i,k)+\cdots+\tbinom{1}{k} P_k({\bf x};i,k)$$ for $1\le i\le
d-k+1$. That is, \eqref{8-11-0} holds for $k=\ell+1$, $1\le i\le
d-k+1$ and $n=1$. Assume for $n=m\ge 1$,  \eqref{8-11-0} holds for
$k=\ell+1$, $1\le i\le d-k+1$ and $n=m$. For $n=m+1$, by
\eqref{sec8-4-eq-2}
\begin{align*}
x_i^k(n)&=x_i^k(m)+\Big(\sum \limits_{j=1}^{k-1} x_i^{k-j}(m)\cdot x_{i+k-j}^j\Big)+x_i^k\\
&=x_i^k(m)+\Big(\sum \limits_{j=1}^{k-1} (\sum \limits_{r=1}^{k-j}\tbinom{m}{r}P_r({\bf x};i,k-j))\cdot x_{i+k-j}^j)\Big)+x_i^k\\
&=x_i^k(m)+\Big(\sum \limits_{r=1}^{k-1} (\sum \limits_{j=1}^{k-r} P_r({\bf x};i,k-j)x_{i+k-j}^{j})\tbinom{m}{r}\Big)+x_i^k\\
&=x_i^k(m)+\Big(\sum \limits_{r=1}^{k-1} (\sum \limits_{j=r}^{k-1}
P_r({\bf x};i,j)x_{i+j}^{k-j})\tbinom{m}{r}\Big)+x_i^k
\end{align*}
for $1\le i\le d-k+1$. Note that

$$\sum \limits_{j=r}^{k-1} P_r({\bf x};i,j)x_{i+j}^{k-j}=\sum
\limits_{j=r}^{k-1} \sum \limits_{(s_1,\cdots,s_r)\in
\{1,2,\cdots,k-1\}^r \atop
s_1+\cdots+s_r=j}x_i^{s_1}x_{i+s_1}^{s_2}\cdots
x_{i+s_1+\cdots+s_{r-1}}^{s_r} x_{i+j}^{k-j}$$ which is equal to
$$\sum \limits_{(s_1,\cdots,s_r,s_{r+1})\in \{1,2,\cdots,k-1\}^{r+1}
\atop s_1+s_2+\cdots+s_r+s_{r+1}=k}x_i^{s_1}x_{i+s_1}^{s_2}\cdots
x_{i+s_1+\cdots+s_{r-1}}^{s_r}
x_{i+s_1+\cdots+s_{r-1}+s_r}^{s_{r+1}}=P_{r+1}({\bf x};i,k)$$ for
$1\le r\le k-1$ and $1\le i\le d-k+1$. Collecting terms we have

\begin{align*}
x_i^k(n)&=x_i^k(m)+\Big(\sum \limits_{r=1}^{k-1} P_{r+1}({\bf x};i,k) \tbinom{m}{r}\Big)+x_i^k\\
&=x_i^k(m)+\Big(\sum \limits_{r=2}^{k} P_{r}({\bf x};i,k) \tbinom{m}{r-1}\Big)+P_1({\bf x};i,k)\\
&=\Big(\sum \limits_{r=1}^m P_r({\bf x};i,k)\tbinom{m}{r}\Big)+
\Big(\sum \limits_{r=2}^{k} P_{r} ({\bf x};i,k)
\tbinom{m}{r-1}\Big)+P_1({\bf x};i,k).
\end{align*}

Rearranging the order we get
\begin{align*}
x_i^k(n)&=(m+1)P_1({\bf x};i,k)+\sum_{r=2}^k \Big(\tbinom{m}{r}+\tbinom{m}{r-1}\Big)P_r({\bf x};i,k)\\
&=\sum \limits_{r=1}^k \tbinom{m+1}{r}P_r({\bf x};i,k)=\sum
\limits_{r=1}^k \tbinom{n}{r}P_r({\bf x};i,k)
\end{align*}
for $1\le i\le d-k+1$. This ends the proof of the lemma.
\end{proof}

\begin{rem} \label{rem-8-ite}
By the above lemma, we have $$P_1({\bf x};i,k)=x_i^k \text{ and
 }P_k({\bf x};i,k)=x_i^1x_{i+1}^1\cdots x_{i+k-1}^1$$ for $1\le k\le
d$ and $1\le i\le d-k+1$.
\end{rem}

\subsection{Consequences under the inductive assumption}
We will use induction to show Theorem B(2). To make the proof
clearer, we derive some results under the following inductive
assumption.
\begin{equation}\label{as-d-1}
\mathcal{F}_{d-1,0}\supset \mathcal{F}_{GP_{d-1}},
\end{equation}
where $d\in\mathbb{N}$ with $d\ge 2$. For that purpose, we need more
notions and lemmas. The proof of Lemma \ref{lem-8-11} is similar to
the one of Lemma \ref{lem1-1}, where $\mathcal{W}_d$ is defined in
Definition \ref{de-ws}.

\begin{lem}\label{lem-8-11}
Under the assumption \eqref{as-d-1}, one has for any $p(n)\in
\mathcal{W}_d$ and $\epsilon>0$,
$$\{n\in \mathbb{Z}:p(n)\ (\text{mod}\ \Z) \in (-\epsilon,\epsilon)\}\in \mathcal{F}_{d-1,0}.$$
\end{lem}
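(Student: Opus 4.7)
The plan is to mirror the proof of Lemma \ref{lem1-1}, replacing the filter $\F_{SGP_{d-1}}$ by $\F_{d-1,0}$ throughout. The first observation I need is that $\F_{d-1,0}$ is itself a filter: given two Nil$_{d-1}$ Bohr$_0$-sets $A_i\supset N(x_i,U_i)$ inside $(X_i,T_i)$ ($i=1,2$), the product nilsystem $(X_1\times X_2, T_1\times T_2)$ is again $(d-1)$-step, and the open neighborhood $U_1\times U_2$ of $(x_1,x_2)$ yields $N((x_1,x_2),U_1\times U_2)=N(x_1,U_1)\cap N(x_2,U_2)\subset A_1\cap A_2$. Consequently, since any $p(n)\in \mathcal{W}_d$ is a finite $\R$-linear combination of elements of $\mathcal{SW}_d$, it suffices (after splitting $\epsilon$) to prove the statement in the special case $p(n)=a q(n)$ with $a\in\R$ and $q\in\mathcal{SW}_d$.

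Next I would unpack the definition of $\mathcal{SW}_d$: write
\[
q(n)=\prod_{i=1}^\ell(w_i(n)-\lceil w_i(n)\rceil),\qquad \ell\ge 2,\ r_i\ge 1,\ w_i\in GP_{r_i},\ \sum_{i=1}^\ell r_i\le d.
\]
The crucial combinatorial point is that $\ell\ge 2$ and each $r_i\ge 1$ force $r_1\le d-(\ell-1)\le d-1$, so $w_1\in GP_{d-1}$. By the inductive assumption \eqref{as-d-1}, this places $\{n\in\Z: w_1(n)\ (\mathrm{mod}\ \Z)\in(-\eta,\eta)\}$ in $\F_{GP_{d-1}}\subset \F_{d-1,0}$ for every $\eta>0$.

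Finally I would exploit the trivial bound $|w_i(n)-\lceil w_i(n)\rceil|\le 1$ for $i\ge 2$, which yields
\[
|p(n)|=|a|\,|q(n)|\le |a|\,|w_1(n)-\lceil w_1(n)\rceil|.
\]
Choosing $\eta=\delta/(1+|a|)$ for any prescribed $\delta>0$, we obtain the inclusion
\[
\{n\in\Z: p(n)\ (\mathrm{mod}\ \Z)\in(-\delta,\delta)\}\supset \{n\in\Z: w_1(n)\ (\mathrm{mod}\ \Z)\in(-\eta,\eta)\},
\]
using only that $\|\,\cdot\,\|\le |\cdot|$. Since the right-hand side is in $\F_{d-1,0}$ and families are hereditary upward, the left-hand side lies in $\F_{d-1,0}$, completing the proof. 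There is no real obstacle here beyond bookkeeping; the only conceptual ingredient is the filter property of $\F_{d-1,0}$, and once that is recorded the argument proceeds just as in Lemma \ref{lem1-1}.
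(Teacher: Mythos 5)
Your proof is correct and follows essentially the same route as the paper, which explicitly states that the proof of this lemma ``is similar to the one of Lemma~\ref{lem1-1}'' and leaves the details to the reader; you have simply written out that parallel, replacing $\F_{SGP_{d-1}}$ by $\F_{d-1,0}$ and supplying the (standard) filter property of $\F_{d-1,0}$ via product nilsystems. You even silently repair a small typo in the printed proof of Lemma~\ref{lem1-1}, where the inequality should read $|p(n)|\le |a|\,|w_1(n)-\lceil w_1(n)\rceil|$ rather than $|q(n)|\le |a|\,|w_1(n)-\lceil w_1(n)\rceil|$.
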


\begin{de}
For $r\in \mathbb{N}$, we define
$${\widetilde {GP}}_r=\{ p(n)\in GP_r: \{ n\in \mathbb{Z}: p(n)\ (\text{mod}\ \Z)\in (-\epsilon,\epsilon)\}\in
\mathcal{F}_{r,0} \text{ for any }\epsilon>0\}.$$
\end{de}
\begin{rem}\label{rem-8-15} It is clear that for $p(n)\in GP_r$,
$p(n)\in {\widetilde {GP}}_r$ if and only if $-p(n)\in {\widetilde
{GP}}_r$. Since $\mathcal{F}_{r,0}$ is a filter, if
$p_1(n),p_2(n),\cdots,p_k(n)\in {\widetilde {GP}}_r$ then
$$p_1(n)+p_2(n)+\cdots+p_k(n)\in {\widetilde {GP}}_r.$$ Moreover by the definition
of ${\widetilde {GP}}_d$, we know that $\mathcal{F}_{d,0}\supset
\mathcal{F}_{GP_d}$ if and only if ${\widetilde {GP}}_d=GP_d$.
\end{rem}

\begin{lem}\label{lem-8-equi-new} Let $p(n),q(n)\in GP_d$ with $p(n)\simeq_d q(n)$. Under the assumption \eqref{as-d-1},
$p(n)\in {\widetilde {GP}}_d$ if and only if $q(n)\in {\widetilde
{GP}}_d$.
%
\end{lem}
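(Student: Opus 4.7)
The plan is a direct triangle-inequality argument combined with the filter structure of $\mathcal{F}_{d,0}$. Before the main step, I would record two structural facts that ought to be essentially standard: first, $\mathcal{F}_{d,0}$ is closed under finite intersections, because the product of two basic $d$-step nilsystems is a basic $d$-step nilsystem (as is already noted after Definition~\ref{de-nilsystem}), so the intersection of two Nil$_d$ Bohr$_0$-sets contains a Nil$_d$ Bohr$_0$-set; second, $\mathcal{F}_{d-1,0}\subset\mathcal{F}_{d,0}$, since any $(d-1)$-step nilsystem is automatically a $d$-step nilsystem.

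Given $p(n)\simeq_d q(n)$, I would unpack the relation to get $h_1\in GP_{d-1}$ and $h_2\in\mathcal{W}_d$ with
$$q(n)\equiv p(n)+h_1(n)+h_2(n)\pmod{\Z}\quad\text{for all }n\in\Z.$$
Assuming $p\in\widetilde{GP}_d$ and fixing $\epsilon>0$, the triangle inequality for the distance-to-integer seminorm yields $\|q(n)\|\le\|p(n)\|+\|h_1(n)\|+\|h_2(n)\|$, so
$$\{n\in\Z:\|q(n)\|<\epsilon\}\ \supset\ E_0\cap E_1\cap E_2,$$
where $E_0=\{n:\|p(n)\|<\epsilon/3\}$, $E_1=\{n:\|h_1(n)\|<\epsilon/3\}$ and $E_2=\{n:\|h_2(n)\|<\epsilon/3\}$. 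Then $E_0\in\mathcal{F}_{d,0}$ by the hypothesis $p\in\widetilde{GP}_d$; $E_1\in\mathcal{F}_{GP_{d-1}}\subset\mathcal{F}_{d-1,0}\subset\mathcal{F}_{d,0}$ by the inductive assumption~\eqref{as-d-1}; and $E_2\in\mathcal{F}_{d-1,0}\subset\mathcal{F}_{d,0}$ by Lemma~\ref{lem-8-11}. The filter property places the intersection in $\mathcal{F}_{d,0}$, and upward heredity promotes the containing set, giving $q\in\widetilde{GP}_d$.

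For the reverse implication, I would exploit the symmetry of $\simeq_d$: rewriting as $p(n)\equiv q(n)+(-h_1(n))+(-h_2(n))\pmod{\Z}$, one has $-h_1\in GP_{d-1}$ (closure under scalar multiplication) and $-h_2\in\mathcal{W}_d$ (an $\mathbb{R}$-span is closed under negation), so the same argument applies verbatim. I do not anticipate any genuine obstacle: the only substantive input is Lemma~\ref{lem-8-11} (which already handles the $\mathcal{W}_d$-piece using~\eqref{as-d-1}), and everything else is bookkeeping with the triangle inequality and the filter axioms.
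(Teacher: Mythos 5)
Your proof is correct and is essentially the same argument the paper has in mind: the paper's one-line proof "This follows from Lemma \ref{lem-8-11} and the fact that $\mathcal{F}_{d,0}$ is a filter" is exactly the triangle-inequality-plus-filter reasoning you spell out, using the hereditary inclusion $\mathcal{F}_{d-1,0}\subset\mathcal{F}_{d,0}$, assumption \eqref{as-d-1} for the $GP_{d-1}$-piece, and Lemma \ref{lem-8-11} for the $\mathcal{W}_d$-piece. Your observation that $\simeq_d$ is symmetric (since $GP_{d-1}$ and $\mathcal{W}_d$ are closed under negation) correctly handles the converse direction.
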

\begin{proof} This follows from Lemma \ref{lem-8-11} and the fact that $\mathcal{F}_{d,0}$ is a filter.
\end{proof}

For $\alpha_1,\alpha_2,\ldots, \alpha_r\in \mathbb{R}, r\in \N$, we
define
\begin{align}\label{key-poly}
   &P(n;\alpha_1,\alpha_2,\cdots,\alpha_r) \nonumber\\
=&\sum \limits_{\ell=1}^r \sum \limits_{j_1,\cdots, j_\ell\in
\mathbb{N}\atop j_1+\cdots+j_\ell=r}(-1)^{\ell-1} L\Big(
\frac{n^{j_1}}{j_1!}\prod \limits_{r_1=1}^{j_1}\alpha_{r_1},\,
\frac{n^{j_2}}{j_2!}\prod \limits_{r_2=1}^{j_2}\alpha_{j_1+r_2},\,
\cdots,\, \frac{n^{j_\ell}}{j_\ell!}\prod
\limits_{r_\ell=1}^{j_\ell}\alpha_{\sum
\limits_{t=1}^{\ell-1}j_t+r_\ell} \Big)
\end{align}
where the definition of $L$ is given in \eqref{eq-de-L}.

\begin{thm} \label{lem-trans}
Under the assumption \eqref{as-d-1}, the following properties are
equivalent:
\begin{enumerate}
\item  $\mathcal{F}_{d,0}\supset \mathcal{F}_{GP_d}$.

\item $P(n;\alpha_1,\alpha_2,\cdots,\alpha_d)\in {\widetilde {GP}}_d$ for any
$\alpha_1,\alpha_2,\cdots,\alpha_d\in \mathbb{R}$, that is
$$\{ n\in \mathbb{Z}: P(n;\alpha_1,\alpha_2,\cdots,\alpha_d)\ (\text{mod}\ \Z)
\in (-\epsilon, \epsilon) \}\in \mathcal{F}_{d,0}$$ for any
$\alpha_1,\alpha_2,\cdots,\alpha_d\in \mathbb{R}$ and $\epsilon>0$.

\item $\text{SGP}_d\subset {\widetilde {GP}}_d$.
\end{enumerate}
\end{thm}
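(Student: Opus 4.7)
The plan is to establish the chain $(1)\Leftrightarrow(3)\Rightarrow(2)\Rightarrow(3)$, which yields the three-way equivalence.

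For $(1)\Leftrightarrow(3)$: The central observation is that $\mathcal{F}_{d,0}$ is a filter, since $\mathbb{Z}\in\mathcal{F}_{d,0}$, $\emptyset\notin\mathcal{F}_{d,0}$ (every $N(x_0,U)$ contains $0$), and the product of two basic $d$-step nilsystems is again a basic $d$-step nilsystem (so that $A_1\supset N(x_1,U_1)$ and $A_2\supset N(x_2,U_2)$ imply $A_1\cap A_2\supset N((x_1,x_2),U_1\times U_2)$). Combined with Theorem \ref{gpsgp}, which gives $\mathcal{F}_{GP_d}=\mathcal{F}_{SGP_d}$, condition (1) is equivalent to asking that every basic generator $\{n\in\mathbb{Z}:\|p(n)\|<\epsilon\}$ of $\mathcal{F}_{SGP_d}$ (with $p\in SGP_d$ and $\epsilon>0$) lies in $\mathcal{F}_{d,0}$, which is exactly (3).

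For $(3)\Rightarrow(2)$: Inspection of the definition \eqref{key-poly} shows that each summand of $P(n;\alpha_1,\ldots,\alpha_d)$ is of the form $\pm L(n^{j_1}c_1,\ldots,n^{j_\ell}c_\ell)$ with $\sum_t j_t=d$ and $c_t\in\mathbb{R}$, i.e.\ an element of $SGP_d$. Under (3) each summand lies in $\widetilde{GP}_d$, and since $\widetilde{GP}_d$ is closed under finite sums (Remark \ref{rem-8-15}), the sum $P(n;\alpha_1,\ldots,\alpha_d)$ itself belongs to $\widetilde{GP}_d$.

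For $(2)\Rightarrow(3)$: Given $q=L(n^{j_1}b_1,\ldots,n^{j_\ell}b_\ell)\in SGP_d$ with $\sum_t j_t\le d$, split into two cases. If $\sum_t j_t<d$, then $q\in SGP_{d-1}$, and the inductive hypothesis \eqref{as-d-1} together with the inclusion $\mathcal{F}_{d-1,0}\subset\mathcal{F}_{d,0}$ give $q\in\widetilde{GP}_d$. If $\sum_t j_t=d$, I would induct on the number $\ell$ of factors: at each step choose $\alpha_1,\ldots,\alpha_d$ so that the target $q$ appears as a distinguished summand of $P(n;\alpha_1,\ldots,\alpha_d)$, and then rewrite the remaining summands via Lemmas \ref{simple2}, \ref{simple3}, \ref{simple1} as a sum of elements of $\mathcal{W}_d+GP_{d-1}$. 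The latter set lies in $\widetilde{GP}_d$ by Lemma \ref{lem-8-11} combined with \eqref{as-d-1}, so Lemma \ref{lem-8-equi-new} promotes the resulting $\simeq_d$-identity to the desired conclusion $q\in\widetilde{GP}_d$.

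The main obstacle is precisely $(2)\Rightarrow(3)$ in the case $\sum_t j_t=d$. The difficulty is that $P(n;\alpha_1,\ldots,\alpha_d)$ aggregates contributions from every composition of $d$ simultaneously, so isolating one target summand forces a coordinated choice of the $\alpha_i$'s and an ordering of the induction on $\ell$ compatible with the $\simeq_d$-calculus of Lemmas \ref{simple2}, \ref{simple3}, \ref{simple1}. The base of the induction --- either $\ell=1$ (yielding $q=n^db_1$) or $\ell=d$ (yielding the maximally nested $L(nb_1,\ldots,nb_d)$) --- must be dispatched by an ad hoc choice of the $\alpha_i$'s that drops every other summand of $P$ into $\mathcal{W}_d+GP_{d-1}$, at which point the inductive hypothesis \eqref{as-d-1} finishes the argument.
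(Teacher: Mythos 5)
Your chain $(1)\Leftrightarrow(3)\Rightarrow(2)\Rightarrow(3)$ is sensible, and the first two legs are correct --- indeed cleaner than the paper's $(1)\Rightarrow(2)$, since observing that each summand of \eqref{key-poly} lies in $SGP_d$ and appealing to closure under finite sums (Remark \ref{rem-8-15}) is a tidy way to get $(3)\Rightarrow(2)$.

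The gap is in $(2)\Rightarrow(3)$, and it is a real one. You claim that, after choosing $\alpha_1,\dots,\alpha_d$ so that the target $q=L(n^{j_1}b_1,\dots,n^{j_\ell}b_\ell)$ is a distinguished summand of $P(n;\alpha_1,\dots,\alpha_d)$, the \emph{other} summands can be rewritten (via Lemmas \ref{simple1}--\ref{simple3}) as elements of $\mathcal{W}_d+GP_{d-1}$. This fails already for $d=2$ and ${\bf i}=(1,1)$: with $\alpha_1=a_1,\alpha_2=a_2$ the leftover summand is $\frac{n^2}{2}a_1a_2$, a genuine degree-$2$ monomial, which is neither in $\mathcal{W}_2$ nor in $GP_1$ and cannot be reduced to such by the $\simeq_d$-calculus. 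More generally, when $\sum_t j_t=d$ with $\ell\ge 2$, the remaining terms in $P$ corresponding to ``larger'' compositions are themselves degree-$d$ special generalized polynomials; they do not collapse to $\mathcal{W}_d+GP_{d-1}$. The paper's way out is a strong induction on the compositions of $d$ under a lexicographic-type total order (starting from the maximal element $(d)$ and descending to $(1,\dots,1)$): for the target ${\bf i}$, the $\alpha$'s are chosen so that summands with composition $<{\bf i}$ become integers or collapse onto compositions $>{\bf i}$, and summands with composition $>{\bf i}$ are absorbed because $\mathcal{L}({\bf s})\subset\widetilde{GP}_d$ was already established at an earlier stage of the induction --- not because they drop into $\mathcal{W}_d+GP_{d-1}$. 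Inducting on the number of factors $\ell$, as you suggest, does not isolate one composition at a time; compositions of a fixed $\ell$ are incomparable in the crucial sense, and you would need to handle them all simultaneously. So the organizing variable of your induction is also misdirected. The correct skeleton is: base case ${\bf i}=(d)$ (only this one is ``dispatched'' by making all other terms integers), then descend through the lexicographic order, using previously established cases as your inductive hypothesis for the larger compositions that appear as the unavoidable remaining summands.
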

\begin{proof} $(1)\Rightarrow(2)$. Assume $\mathcal{F}_{d,0}\supset \mathcal{F}_{GP_d}$.
By the definition of ${\widetilde {GP}}_d$, we know that
$\mathcal{F}_{d,0}\supset \mathcal{F}_{GP_d}$ if and only if
${\widetilde {GP}}_d=GP_d$. Particularly
$P(n;\alpha_1,\alpha_2,\cdots,\alpha_d)\in {\widetilde {GP}}_d$ for
any $\alpha_1,\alpha_2,\cdots,\alpha_d\in \mathbb{R}$.

\medskip
$(3)\Rightarrow (1)$. Assume that $\text{SGP}_d\subset {\widetilde
{GP}}_d$. Then $\mathcal{F}_{d,0}\supseteq \mathcal{F}_{SGP_d}$.
Moveover $\mathcal{F}_{d,0}\supset\F_{SGP_d}=\F_{GP_d}$ by Theorem
\ref{gpsgp}.

\medskip
$(2)\Rightarrow (3)$.  Assume that
$P(n;\alpha_1,\alpha_2,\cdots,\alpha_d)\in {\widetilde {GP}}_d$ for
any $\alpha_1,\alpha_2,\cdots,\alpha_d\in \mathbb{R}$. We define
$$\Sigma_d=\{ (j_1,j_2,\cdots,j_\ell): \ell\in \{1,2,\cdots,d\}, j_1,j_2,\cdots,
j_\ell\in \mathbb{N} \text{ and }\sum \limits_{t=1}^\ell j_t=d\}. $$
For $(j_1,j_2,\cdots,j_\ell),(r_1,r_2,\cdots,r_s)\in \Sigma_d$, we
say $(j_1,j_2,\cdots,j_\ell)>(r_1,r_2,\cdots,r_s)$ if there exists
$1\le t\le \ell$ such that $j_t>r_s$ and $j_i=r_i$ for $i<t$.
Clearly $(\Sigma_d,>)$ is a totally ordered set with the maximal
element $(d)$ and the minimal element $(1,1,\cdots,1)$.

For ${\bf j}=(j_1,j_2,\cdots,j_\ell)\in \Sigma_d$, put
$$\mathcal{L}({\bf j})=\{L(n^{j_1}a_1,\cdots,n^{j_\ell}a_\ell):a_1,\cdots,a_\ell\in \mathbb{R}\}.$$
Now, we have

\medskip
\noindent{\bf Claim:} $\mathcal{L}({\bf s})\subseteq {\widetilde
{GP}}_d$ for each ${\bf s}\in \Sigma_d$.

\begin{proof} We do induction for ${\bf s}$ under the order $>$.
First, consider the case when ${\bf s}=(d)$. Given  $a_1\in
\mathbb{R}$, we take $\alpha_1=1,\alpha_2=2,\cdots\alpha_{d-1}=d-1$
and $\alpha_d=d a_1$. Then for any $1\le j_1\le d-1$,
$\frac{n^{j_1}}{j_1!}\prod \limits_{t=1}^{j_1} \alpha_t \in
\mathbb{Z}$ for $n\in \mathbb{Z}$. Thus
$$P(n;\alpha_1,\alpha_2,\cdots,\alpha_d)=L(\frac{n^d}{d!}\prod
\limits_{t=1}^{d} \alpha_t)=L(n^d a_1) \ (\text{mod}\ \Z)$$ for any
$n\in \mathbb{Z}$. Hence $L(n^d a_1)\in {\widetilde {GP}}_d$ since
$P(n;\alpha_1,\alpha_2,\cdots,\alpha_d)\in {\widetilde {GP}}_d$.
Since $a_1$ is arbitrary, we conclude that $\mathcal{L}((d))\subset
{\widetilde {GP}}_d$.

Assume that for any ${\bf s}>{\bf i}=(i_1,\cdots,i_k)\in \Sigma_d$,
we have $\mathcal{L}({\bf s})\subset {\widetilde {GP}}_d$. Now
consider the case when  ${\bf s}={\bf i}=(i_1,\cdots,i_k)$. There
are two cases.

The first case is $k=d$, $i_1=i_2=\cdots =i_d=1$. Given
$a_1,a_2,\cdots,a_d \in \R$, by the assumption we have that for any
$(j_1,j_2,\cdots,j_{\ell})>{\bf i}$,
$\mathcal{L}((j_1,j_2,\cdots,j_{\ell}))\subset {\widetilde {GP}}_d$.
Thus
$$\sum \limits_{\ell=1}^{d-1} \sum \limits_{j_1,\cdots, j_\ell\in
\mathbb{N}\atop j_1+\cdots+j_\ell=r}(-1)^{\ell-1} L\Big(
\frac{n^{j_1}}{j_1!}\prod \limits_{r_1=1}^{j_1}a_{r_1},\,
\frac{n^{j_2}}{j_2!}\prod \limits_{r_2=1}^{j_2}a_{j_1+r_2},\,
\cdots,\, \frac{n^{j_\ell}}{j_\ell!}\prod
\limits_{r_\ell=1}^{j_\ell}a_{\sum_{t=1}^{\ell-1}j_t+r_\ell} \Big)$$
belongs to ${\widetilde {GP}}_d$ by the Remark \ref{rem-8-15}. This
implies  $$P(n;a_1,a_2,\cdots,a_d)-L(n a_1,n a_2,\cdots,n a_d)\in
{\widetilde {GP}}_d$$ by \eqref{key-poly}. Combining this with
$P(n;a_1,a_2,\cdots,a_d)\in {\widetilde {GP}}_d$, we have $$L(n
a_1,n a_2,\cdots,n a_d)\in {\widetilde {GP}}_d$$ by  Remark
\ref{rem-8-15}. Since $a_1,a_2,\cdots,a_d \in \R$ are arbitrary, we
get $\mathcal{L}({\bf i})\subset {\widetilde {GP}}_d$.

The second case is ${\bf i}>(1,1,\cdots,1)$. Given
$a_1,a_2,\cdots,a_k\in \mathbb{R}$, for $r=1,2,\cdots,k$, we put
$\alpha_{\sum_{t=1}^{r-1}i_t+h}=h$ for $1\le h\le i_r-1$ and
$\alpha_{\sum_{t=1}^{r-1}i_t+i_r}=i_r a_r$.

By the assumption, for $(j_1,j_2,\cdots,j_{\ell})>{\bf i}$,
 $$L\Big(
\frac{n^{j_1}}{j_1!}\prod \limits_{r_1=1}^{j_1}\alpha_{r_1},\,
\frac{n^{j_2}}{j_2!}\prod \limits_{r_2=1}^{j_2}\alpha_{j_1+r_2},\,
\cdots,\, \frac{n^{j_\ell}}{j_\ell!}\prod
\limits_{r_\ell=1}^{j_\ell}\alpha_{\sum_{t=1}^{\ell-1}j_t+r_\ell}
\Big)\in {\widetilde {GP}}_d.$$

For $(j_1,j_2,\cdots,j_\ell)<{\bf i}$, there exists $1\le u\le k$
such that $j_t=i_t$ for $1\le t\le u-1$ and $i_{u}>j_u$. Then
\begin{equation}\label{8-interger}
\frac{n^{j_u}}{j_u!}\prod
\limits_{r_u=1}^{j_u}\alpha_{\sum_{t=1}^{u-1}j_t+r_u}=n^{j_u}.
\end{equation}
When $u=1$,  by \eqref{8-interger}, $$L\Big(
\frac{n^{j_1}}{j_1!}\prod \limits_{r_1=1}^{j_1}\a_{r_1},\cdots,\,
\cdots,\, \frac{n^{j_\ell}}{j_\ell!}\prod
\limits_{r_\ell=1}^{j_\ell}\a_{\sum_{t=1}^{\ell-1}j_t+r_\ell}
\Big) \in \mathbb{Z}$$ for any $n\in \mathbb{Z}$. Hence
$$L\Big(
\frac{n^{j_1}}{j_1!}\prod \limits_{r_1=1}^{j_1}\a_{r_1},\cdots,\,
\cdots,\, \frac{n^{j_\ell}}{j_\ell!}\prod
\limits_{r_\ell=1}^{j_\ell}\a_{\sum_{t=1}^{\ell-1}j_t+r_\ell}
\Big)\in \widetilde{GP}_d.$$ When $u>1$, write
$\beta_v=\prod_{r_v=1}^{j_v}\alpha_{\sum_{t=1}^{v-1}j_t+r_v}$ for
$v=1,2,\cdots,\ell$. Then $\beta_u=1$ and $$\lceil
L(n^{j_u}\beta_u,n^{j_{u+1}}\beta_{u+1}\, \cdots,\,
n^{j_\ell}\beta_\ell)\rceil=L(n^{j_u}\beta_u,n^{j_{u+1}}\beta_{u+1}\,
\cdots,\, n^{j_\ell}\beta_\ell).$$ Moreover, $$ L\Big(
\frac{n^{j_1}}{j_1!}\prod
\limits_{r_1=1}^{j_1}\a_{r_1},\cdots,\frac{n^{j_u}}{j_u!}\prod
\limits_{r_u=1}^{j_u}\alpha_{\sum_{t=1}^{u-1}j_t+r_\ell},\,
\cdots,\, \frac{n^{j_\ell}}{j_\ell!}\prod
\limits_{r_\ell=1}^{j_\ell}\a_{\sum_{t=1}^{\ell-1}j_t+r_\ell}
\Big)$$ is equal to $$L\left(
n^{j_1}\beta_1,\cdots,n^{j_u}\beta_u,\, \cdots,\,
n^{j_\ell}\beta_\ell \right)=L\left(
n^{j_1}\beta_1,\cdots,n^{j_{u-1}}\beta_{u-1}\lceil L(n^{j_u}\beta_u,
\cdots,\, n^{j_\ell}\beta_\ell)\rceil \right)$$ which is equal to
\begin{align*} &L\left( n^{j_1}\beta_1,\cdots,n^{j_{u-1}}\beta_{u-1}
L(n^{j_u}\beta_u,n^{j_{u+1}}\beta_{u+1}\,
\cdots,\, n^{j_\ell}\beta_\ell)\right)\\
&=L\left( n^{j_1}\beta_1,\cdots,n^{j_{u-1}+j_u}\beta_{u-1}\beta_u
\lceil L(n^{j_{u+1}}\beta_{u+1}\, \cdots,\,
n^{j_\ell}\beta_\ell)\rceil \right)\\
&=L\left( n^{j_1}\beta_1,\cdots,n^{j_{u-1}+j_u}\beta_{u-1}\beta_u,
n^{j_{u+1}}\beta_{u+1}\, \cdots,\, n^{j_\ell}\beta_\ell \right) \in
{\widetilde {GP}}_d
\end{align*}
since $(j_1,\cdots,j_{u-2},j_{u-1}+j_u,j_{u+1},\cdots,j_\ell)>{\bf
i}$.

Summing up for any ${\bf j}=(j_1,\cdots,j_\ell)\in \Sigma_d$ with
${\bf j}\neq {\bf i}$, we have
$$L\Big( \frac{n^{j_1}}{j_1!}\prod
\limits_{r_1=1}^{j_1}\a_{r_1},\cdots,\frac{n^{j_u}}{j_u!}\prod
\limits_{r_u=1}^{j_u}\alpha_{\sum_{t=1}^{u-1}j_t+r_\ell},\,
\cdots,\, \frac{n^{j_\ell}}{j_\ell!}\prod
\limits_{r_\ell=1}^{j_\ell}\a_{\sum_{t=1}^{\ell-1}j_t+r_\ell} \Big)
\in {\widetilde {GP}}_d.$$ Combining this with
$P(n;\alpha_1,\cdots,\alpha_d)\in {\widetilde {GP}}_d$, we have
\begin{align*}
&\hskip0.6cm L\Big( n^{i_1}a_1,\, n^{i_2}a_2,\, \cdots,\,
n^{i_k}a_k \Big)\\
&= L\Big( \frac{n^{i_1}}{i_1!}\prod
\limits_{r_1=1}^{i_1}\alpha_{r_1},\, \frac{n^{i_2}}{i_2!}\prod
\limits_{r_2=1}^{i_2}\alpha_{i_1+r_2},\, \cdots,\,
\frac{n^{i_k}}{i_k!}\prod
\limits_{r_k=1}^{i_k}\alpha_{\sum_{t=1}^{k-1}i_t+r_k} \Big)\in
{\widetilde {GP}}_d
\end{align*}
by \eqref{key-poly} and Remark \eqref{rem-8-15}. Since
$a_1,\cdots,a_k \in \R$ are arbitrary, $\mathcal{L}({\bf i})\subset
{\widetilde {GP}}_d$.
\end{proof}

Finally, since $\text{SGP}_d=\bigcup_{{\bf j}\in \Sigma_d}
\mathcal{L}({\bf j})$, we have $\text{SGP}_d\subset {\widetilde
{GP}}_d$ by the above Claim.
\end{proof}


\subsection{Proof of Theorem B(2)}

We are now ready to give the proof of the Theorem B(2). As we said
before, we will use induction to show Theorem B(2). Firstly, for
$d=1$, since $\mathcal{F}_{GP_1}=\mathcal{F}_{SGP_1}$ and
$\mathcal{F}_{1,0}$ is a filter, it is sufficient to show for any
$a\in \mathbb{R}$ and $\epsilon>0$,
$$\{n\in \mathbb{Z}: an \ (\text{mod}\ \Z)\in (-\epsilon,\epsilon)\}\in \mathcal{F}_{1,0}.$$
This is obvious since the rotation on the unit circle is a 1-step
nilsystem.

\medskip
Now we assume that $\mathcal{F}_{d-1,0}\supset
\mathcal{F}_{GP_{d-1}}$, i.e. the the assumption \eqref{as-d-1}
holds. By Theorem \ref{lem-trans}, to show $\mathcal{F}_{d,0}\supset
\mathcal{F}_{GP_d}$, it remains to prove that
$P(n;\alpha_1,\alpha_2,\cdots,\alpha_d)\in {\widetilde {GP}}_d$ for
any $\alpha_1,\alpha_2,\ldots,\alpha_d\in \mathbb{R}$, that is
$$\{ n\in \mathbb{Z}: P(n;\alpha_1,\alpha_2,\cdots,\alpha_d)\ (\text{mod}\ \Z)
\in (-\epsilon, \epsilon) \}\in \mathcal{F}_{d,0}$$ for any
$\alpha_1,\alpha_2,\cdots,\alpha_d\in \mathbb{R}$ and $\epsilon>0$.

Let $\alpha_1,\alpha_2,\cdots,\alpha_d\in \mathbb{R}$ and choose
${\bf x}=(x_i^k)_{1\le k\le d, 1\le i\le d-k+1}\in
\mathbb{R}^{d(d+1)/2}$ with $x_i^1=\alpha_i$ for $i=1,2,\cdots,d$
and $x_i^k=0$ for $2\le k\le d$ and $1\le i\le d-k+1$. Then
{\footnotesize
$$
A=\M({\bf x})=\left(
  \begin{array}{cccccccc}
    1 & \alpha_1 & 0                &\ldots & 0 &0\\
    0 & 1     & \alpha_2            &\ldots &  0&0\\
    \vdots & \vdots  &  \vdots & \vdots &\vdots\\
    0 & 0    &0                   & \ldots & \alpha_{d-1} &0 \\
    0 & 0    &0                   & \ldots & 1 &\alpha_d \\
    0 & 0    &0                 & \ldots & 0  & 1
  \end{array}
\right)
$$}
For $n\in \mathbb{N}$, if ${\bf x}(n)=(x_i^k(n))_{1\le k\le d, 1\le
i\le d-k+1}\in \mathbb{R}^{d(d+1)/2}$ satisfies $\M({\bf
x}(n))=A^n$, then by Lemma \ref{lem-8-ite} and Remark
\ref{rem-8-ite},
\begin{equation}\label{thm-8-11-0}
x_i^k(n)=\tbinom{n}{k} P_k({\bf
x};i,k)=\tbinom{n}{k}x_i^1x_{i+1}^1\cdots
x_{i+k-1}^1=\tbinom{n}{k}\alpha_i \alpha_{i+1}\cdots \alpha_{i+k-1}
\end{equation}
for $1\le k\le d$ and $1\le i\le d-k+1$.

Now we define $f_i^1(n)=\lceil x_i^1(n)\rceil=\lceil n\alpha_i
\rceil$ for $1\le i\le d$ and inductively for $k=2,3,\cdots,d$
define
\begin{equation}\label{sec8-dtgs-1}
f_i^k(n)=\bigg\lceil x_i^k(n)-\sum
\limits_{j=1}^{k-1}x_i^{k-j}(n)f_{i+k-j}^j(n)  \bigg\rceil
\end{equation}
for $1\le i \le d-k+1$. Then we define
$$z_i^1(n)=x_i^1(n)-f_i^1(n)$$ for $1\le i\le d$ and inductively for
$k=2,3,\cdots,d$ define
\begin{equation}\label{sec8-thm-eq-1-old}
z_i^k(n)=x_i^k(n)-\Big( \sum
\limits_{j=1}^{k-1}x_i^{k-j}(n)f_{i+k-j}^j(n) \Big)-f_i^k(n)
\end{equation}
for $1\le i \le d-k+1$.

It is clear that $z_i^k(n)\in GP_k$ for $1\le k\le d$ and $1\le i\le
d-k+1$. First, we have

\medskip
\noindent{\bf Claim:} $P(n;\alpha_1,\alpha_2,\cdots,\alpha_d)
\simeq_d z_1^d(n)$.

Since the proof of the Claim is long, the readers find the proof in
the following subsection. Now we are going to show $z_1^d(n) \in
{\widetilde {GP}}_d$.

Let $X=\G_d/\Gamma$ be endowed with the metric $\rho$ in Lemma
\ref{matix-metric} and $T$ be the nilrotation induced by $A\in\G_d$,
i.e. $B\Gamma\mapsto AB\Gamma$ for $B\in\G_d$. Since $\G_d$ is a
$d$-step nilpotent Lie group and $\Gamma$ is a uniform subgroup of
$\G_d$, $(X,T)$ is a $d$-step nilsystem. Let $x_0=\Gamma\in X$ and
$Z$ be the closure of the orbit $\text{orb}(x_0,T)$ of $e$ in $X$.
Then $(Z,T)$ is a minimal $d$-step nilsystem. We consider $\rho$ as
a metric on $Z$.

For a given $\eta>0$ choose $\delta>0$ such that
$e^{\delta+\delta^2+\cdots+\delta^d}-1<\min\{\frac{1}{2},\eta\}$.
Put $$U=\{ z\in Z: \rho (z,x_0)<\delta\}$$ and $$S=\{ n\in
\mathbb{N}: \rho (A^n\Gamma, \Gamma)<\delta\}=\{ n\in
\mathbb{Z}:T^nx_0 \in U\}.$$ Then $S\in \mathcal{F}_{d,0}$ since
$(Z,T)$ is a minimal $d$-step nilsystem. In the following we are
going to show that
$$\{ m\in \mathbb{Z}: z_1^d(m)\ (\text{mod} \ \Z)\in (-\eta,\eta) \}\supset S.$$
This clearly implies that $\{ m\in \mathbb{Z}: z_1^d(m)\ (\text{mod}
\ \Z)\in (-\eta,\eta) \}\in \mathcal{F}_{d,0}$ since $S\in
\mathcal{F}_{d,0}$. As $\eta>0$ is arbitrary, we conclude that
$z_1^d(n) \in {\widetilde {GP}}_d$.

Given $n\in S$, one has $\rho(A^n\Gamma,\Gamma)<\delta$. Since
$\rho$ is right-invariant and $\Gamma$ is a group, there exists
$B_n^{-1}\in \Gamma$ such that $\rho (A^n,B_n^{-1})<\delta$. Take
${\bf h}(n)=(-h_i^k(n))_{1\le k\le d, 1\le i\le d-k+1}\in
\mathbb{Z}^{d(d+1)/2}$ with $\M({\bf h}(n))=B_n$. By
\eqref{matrix-key},
\begin{equation}\label{me-eq-222}
\|A^nB_n-I\|\le
e^{\delta+\delta^2+\cdots+\delta^d}-1<\min\Big\{\frac{1}{2},\eta\Big\}.
\end{equation}
Let ${\bf y}(n)=(y_i^k(n))_{1\le k\le d, 1\le i\le d-k+1}\in
\mathbb{R}^{d(d+1)/2}$ such that
$$\M({\bf y}(n))=A^nB_n=\M({\bf x}(n))\M({\bf h}(n)).$$
By \eqref{sec8-4-eq-1}
\begin{equation}\label{sec8-thm-eq-1}
y_i^k(n)=x_i^k(n)-\Big( \sum
\limits_{j=1}^{k-1}x_i^{k-j}(n)h_{i+k-j}^j(n) \Big)-h_i^k(n)
\end{equation}
for $1\le k \le d$ and $1\le i \le d-k+1$. Thus
\begin{equation}\label{sec8-thm-eq-2}
|y_i^k(n)|<\min\{\frac{1}{2},\eta\}
\end{equation}
 for $1\le k \le d$ and $1\le i \le d-k+1$ by \eqref{me-eq-222}.
Hence $h_i^1(n)=\lceil x_i^1(n)\rceil=\lceil n\alpha_i \rceil$ for
$1\le i\le d$ and
\begin{equation}\label{sec8-dtgs-2}
h_i^k(n)=\bigg \lceil x_i^k(n)- \sum
\limits_{j=1}^{k-1}x_i^{k-j}(n)h_{i+k-j}^j(n) \bigg \rceil
\end{equation}
for $2\le k \le d$ and $1\le i \le d-k+1$.

Since $h_i^1(n)=\lceil n\alpha_i \rceil=f_i^1(n)$ for $1\le i\le d$,
one has $h_i^k(n)=f_i^k(n)$ for $2\le k \le d$ and $1\le i \le
d-k+1$ by \eqref{sec8-dtgs-1} and \eqref{sec8-dtgs-2}. Moreover by
\eqref{sec8-thm-eq-1-old} and \eqref{sec8-thm-eq-1}, we know
$z_i^k(n)=y_i^k(n)$ for $2\le k \le d$ and $1\le i \le d-k+1$.
Combining this with \eqref{sec8-thm-eq-2},
$|z_i^k(n)|<\min\{\frac{1}{2},\eta\}$ for $1\le k \le d$ and $1\le i
\le d-k+1$. Particularly, $|z_1^d(n)|<\eta$. Thus $$n\in \{ m\in
\mathbb{Z}: z_1^d(m)\ (\text{mod} \ \Z)\in (-\eta,\eta)\},$$ which
implies that $ \{ m\in \mathbb{Z}: z_1^d(m)\ (\text{mod} \ \Z)\in
(-\eta,\eta)\}\supset S$. That is, $z_1^d(n) \in {\widetilde
{GP}}_d$.

Finally using the Claim and the fact that $z_1^d(n) \in {\widetilde
{GP}}_d$ we have $P(n;\alpha_1,\alpha_2,\cdots,\alpha_d)\in
{\widetilde {GP}}_d$ by Lemma \ref{lem-8-equi-new}. This ends the
proof, i.e. we have proved $\mathcal{F}_{d,0}\supset
\mathcal{F}_{GP_d}$.

\subsection{Proof of the Claim} Let $$u_i^k(n)=z_i^k(n)+f_i^k(n)=x_i^k(n)-\sum
\limits_{j=1}^{k-1}x_i^{k-j}(n)f_{i+k-j}^j(n) $$ for
 $1\le k\le d$
and $1\le i\le d-k+1$. Then $$f_i^k(n)=\lceil u_i^k(n) \rceil $$ for
 $1\le k\le d$
and $1\le i\le d-k+1$.

We define $U(n;j_1)=\frac{n^{j_1}}{j_1!}\prod_{r=1}^{j_1}\alpha_r$
for $1\le j_1\le d$. Then inductively for $\ell=2,3,\cdots,d$ we
define
\begin{align*}
U(n;j_1,j_2,\cdots,j_\ell)&=(U(n;j_1,\cdots,j_{\ell-1})-\lceil
U(n;j_1,\cdots,j_{\ell-1})\rceil )
\frac{n^{j_\ell}}{j_\ell!}\prod_{r=1}^{j_\ell}
\alpha_{\sum_{t=1}^{\ell-1}j_t+r}\\
&=(U(n;j_1,\cdots,j_{\ell-1})-\lceil
U(n;j_1,\cdots,j_{\ell-1})\rceil
)L(\frac{n^{j_\ell}}{j_\ell!}\prod_{r_{\ell}=1}^{j_\ell}
\alpha_{\sum_{t=1}^{\ell-1}j_t+r_{\ell}})
\end{align*}
for $j_1,j_2,\cdots,j_\ell\ge 1$ and $j_1+\cdots+j_\ell\le d$ (see
\eqref{eq-de-L} for the definition of $L$).

Next, $U(n;d)=\frac{n^{d}}{d!}\prod
\limits_{r=1}^{d}\alpha_r=L(\frac{n^{d}}{d!}\prod
\limits_{r=1}^{d}\alpha_r)$ and for $2\le \ell \le d$,
$j_1,j_2,\cdots,j_\ell\in \mathbb{N}$ with
$j_1+j_2+\cdots+j_\ell=d$, by Lemma \ref{lem8-13}(1)

\begin{align*}
U(n;j_1,j_2,\cdots,j_\ell)&=(U(n;j_1,\cdots,j_{\ell-1})-\lceil
U(n;j_1,\cdots,j_{\ell-1}) \rceil)
L(\frac{n^{j_\ell}}{j_\ell!}\prod_{r_{\ell}=1}^{j_\ell}
\alpha_{\sum_{t=1}^{\ell-1}j_t+r_{\ell}})\\&\simeq_d
U(n;j_1,\cdots,j_{\ell-1}) \lceil
L(\frac{n^{j_\ell}}{j_\ell!}\prod_{r_{\ell}=1}^{j_\ell}
\alpha_{\sum_{t=1}^{\ell-1}j_t+r_{\ell}})\rceil
\end{align*}
which is
\begin{align*}
&=(U(n;j_1,\cdots,j_{\ell-2})-\lceil U(n;j_1,\cdots,j_{\ell-2}) \rceil)\times \\
&\hskip 4cm
L(\frac{n^{j_{\ell-1}}}{j_{\ell-1}!}\prod_{r_{\ell-1}=1}^{j_{\ell-1}}\alpha_{\sum_{t=1}^{\ell-2}j_t+r_{\ell-1}},
\frac{n^{j_\ell}}{j_\ell!}\prod_{r_{\ell}=1}^{j_\ell}
\alpha_{\sum_{t=1}^{\ell-1}j_t+r_{\ell}})\\
&\simeq_d U(n;j_1,\cdots,j_{\ell-2}) \lceil
L(\frac{n^{j_{\ell-1}}}{j_{\ell-1}!}\prod_{r_{\ell-1}=1}^{j_{\ell-1}}\alpha_{\sum_{t=1}^{\ell-2}j_t+r_{\ell-1}},
\frac{n^{j_\ell}}{j_\ell!}\prod_{r_{\ell}=1}^{j_\ell}
\alpha_{\sum_{t=1}^{\ell-1}j_t+r_{\ell}})\rceil.
\end{align*}
Continuing the above argument we have
$$U(n;j_1,j_2,\cdots,j_\ell)\simeq_d L\Big(
\frac{n^{j_1}}{j_1!}\prod \limits_{r_1=1}^{j_1}\alpha_{r_1},\,
\frac{n^{j_2}}{j_2!}\prod \limits_{r_2=1}^{j_2}\alpha_{j_1+r_2},\,
\cdots,\, \frac{n^{j_\ell}}{j_\ell!}\prod
\limits_{r_\ell=1}^{j_\ell}\alpha_{\sum_{t=1}^{\ell-1}j_t+r_\ell}
\Big).$$

That is, for $1\le \ell \le d$, $j_1,j_2,\cdots,j_\ell\in
\mathbb{N}$ with $j_1+j_2+\cdots+j_\ell=d$,

\begin{equation}\label{eq-u=l}
U(n;j_1,j_2,\cdots,j_\ell) \simeq_d L\Big(
\frac{n^{j_1}}{j_1!}\prod \limits_{r_1=1}^{j_1}\alpha_{r_1},\,
\frac{n^{j_2}}{j_2!}\prod \limits_{r_2=1}^{j_2}\alpha_{j_1+r_2},\,
\cdots,\, \frac{n^{j_\ell}}{j_\ell!}\prod
\limits_{r_\ell=1}^{j_\ell}\alpha_{\sum_{t=1}^{\ell-1}j_t+r_\ell}
\Big).
\end{equation}

Thus using (\ref{eq-u=l}) we have
\begin{equation}\label{eq-8-14}
P(n;\alpha_1,\alpha_2,\cdots,\alpha_d)\simeq_d\sum
\limits_{\ell=1}^d \sum \limits_{j_1,\cdots j_\ell\in
\mathbb{N}\atop j_1+\cdots+j_\ell=d}(-1)^{\ell-1}
U(n;j_1,j_2,\cdots,j_{\ell}).
\end{equation}

Next using Lemma \ref{lem8-13}(1), for any $j_1,j_2,\cdots,j_\ell\in
\mathbb{N}$ with $j_1+j_2,\cdots+j_{\ell}\le d-1$, we have
$U(n;j_1,\cdots,j_\ell)f_{1+\sum_{t=1}^{\ell}
j_t}^{d-\sum_{t=1}^\ell j_t}(n)$ is equal to
\begin{align*}
&U(n;j_1,\cdots,j_\ell) \lceil u_{1+\sum_{t=1}^{\ell}j_t}^{d-\sum_{t=1}^\ell j_t} (n) \rceil\\
&\simeq_d \Big( U(n;j_1,\cdots,j_\ell)-\lceil
U(n;j_1,\cdots,j_\ell)\rceil \Big)
u_{1+\sum_{t=1}^{\ell} j_t}^{d-\sum_{t=1}^\ell j_t} (n)\\
&=\Big( U(n;j_1,\cdots,j_\ell)-\lceil U(n;j_1,\cdots,j_\ell)\rceil \Big) \times \\
&\hskip3.0cm \Big( x_{1+\sum_{t=1}^{\ell} j_t}^{d-\sum_{t=1}^\ell
j_t}(n)-\sum \limits_{j_{\ell+1}=1}^{d-(\sum_{t=1}^{\ell} j_t)-1}
x_{1+\sum_{t=1}^{\ell} j_t}^{j_{\ell+1}}(n) f_{1+\sum_{t=1}^{\ell+1}
j_t}^{d-\sum_{t=1}^{\ell+1} j_t} (n) \Big)
\end{align*}
which is equal to
\begin{align*}
&\Big( U(n;j_1,j_2,\cdots,j_\ell)-\lceil
U(n;j_1,j_2,\cdots,j_\ell)\rceil \Big) \times\\ &\hskip0.6cm \Bigg(
\tbinom{n}{d-\sum \limits_{t=1}^\ell j_t} \prod
\limits_{r_{\ell+1}=1}^{d-\sum \limits_{t=1}^{\ell} j_t}
\alpha_{\sum \limits_{t=1}^\ell
j_t+r_{\ell+1}}-\sum_{j_{\ell+1}=1}^{d-\sum \limits_{t=1}^{\ell+1}
j_t-1} \tbinom{n}{j_{\ell+1}} \prod
\limits_{r_{\ell+1}=1}^{j_{\ell+1}}\alpha_{\sum \limits_{t=1}^\ell
j_t+r_{\ell+1}} f_{1+\sum \limits_{t=1}^{\ell+1}j_t}^{d-\sum
\limits_{t=1}^{\ell+1}j_t}(n) \Bigg)
\end{align*}
which is
\begin{align*}
&\simeq_d \Big( U(n;j_1,j_2,\cdots,j_\ell)-\lceil U(n;j_1,j_2,\cdots,j_\ell)\rceil \Big) \times \\
&\hskip0.6cm \Bigg( \frac{n^{d-\sum \limits_{t=1}^\ell j_t}}{(d-\sum
\limits_{t=1}^\ell j_t)!} \prod \limits_{r_{\ell+1}=1}^{d-\sum
\limits_{t=1}^{\ell} j_t} \alpha_{\sum \limits_{t=1}^\ell
j_t+r_{\ell+1}}-\sum_{j_{\ell+1}=1}^{d-\sum \limits_{t=1}^{\ell+1}
j_t-1} \frac{n^{j_{\ell+1}}}{j_{\ell+1}!} \prod
\limits_{r_{\ell+1}=1}^{j_{\ell+1}}\alpha_{\sum \limits_{t=1}^\ell
j_t+r_{\ell+1}} f_{1+\sum \limits_{t=1}^{\ell+1}j_t}^{d-\sum
\limits_{t=1}^{\ell+1}j_t}(n)
\Bigg)\\
&=U(n;j_1,\cdots,j_\ell,d-\sum \limits_{t=1}^\ell
j_t)-\sum_{j_{\ell+1}=1}^{d-\sum \limits_{t=1}^{\ell+1} j_t-1}
U(n;j_1,\cdots,j_\ell,j_{\ell+1})f_{1+\sum
\limits_{t=1}^{\ell+1}j_t}^{d-\sum \limits_{t=1}^{\ell+1}j_t}(n).
\end{align*}
Using the fact and Lemma \ref{lem8-13}(1), we have
\begin{align*}
z_1^d(n)&\simeq_d u_1^d(n)=x_1^d(n)- \sum
\limits_{j_1=1}^{d-1}x_1^{j_1}(n)f_{1+j_1}^{d-j_1}(n) \\
&=\tbinom{n}{d}\alpha_1 \alpha_2\cdots \alpha_d- \sum
\limits_{j_1=1}^{d-1}\tbinom{n}{j_1}\alpha_1 \alpha_2\cdots \alpha_{j_1} f_{1+j_1}^{d-j_1}(n) \\
&\simeq_d U(n;d)-\sum
\limits_{j_1=1}^{d-1}U(n;j_1) f_{1+j_1}^{d-j_1}(n) \\
&\simeq_d U(n;d)-\Big( \sum
\limits_{j_1=1}^{d-1}(U(n;j_1,d-j_1)-\sum \limits_{j_2=1}^{d-j_1-1}
U(n;j_1,j_2) f_{1+j_1+j_2}^{d-(j_1+j_2)}(n)) \Big).
\end{align*}

Continuing this argument we obtain
\begin{align*}
z_1^d(n)&\simeq_d \sum \limits_{\ell=1}^d \sum
\limits_{j_1,\cdots,j_\ell\in \mathbb{N}\atop j_1+\cdots+j_{\ell}}
(-1)^{\ell-1} U(n;j_1,\cdots,j_\ell).
\end{align*}

Combining this with \eqref{eq-8-14}, we have proved the Claim.

\section{Proof of Theorem C}
\label{section-proof3}

In this section we will prove Theorem C. That is, we will show that
for $d\in\N$ and $F\in \F_{GP_d}$, there exist a minimal $d$-step
nilsystem $(X,T)$ and a nonempty open set $U$ such that
$$F\supset \{n\in\Z: U\cap T^{-n}U\cap \ldots\cap T^{-dn}U\neq \emptyset \}.$$


Let us explain the idea of the proof of Theorem C. Put
$\NN_d=\{B\subseteq \Z:$ there are a minimal $d$-step nilsystem
$(X,T)$ and an open non-empty set $U$ of $X$ with $B\supset
\{n\in\Z: \bigcap_{i=0}^dT^{-in}U\neq \emptyset\}\}$. Similar to the
proof of Theorem B(2) we first show that
$\mathcal{F}_{GP_d}\subseteq \NN_d$ if and only if $\{ n\in
\mathbb{Z}: ||P(n;\alpha_1,\cdots,\alpha_d)||<\epsilon\}\in \NN_d$
for any $\alpha_1,\cdots,\alpha_d\in \mathbb{R}$ and $\epsilon>0$.
We choose $(X,T)$ as the closure of the orbit of $\Gamma$ in
$\G_d/\Gamma$ (the nilrotation is induced by a matrix $A\in \G_d$),
define $U\subset X$ depending on a given $\ep>0$, put $S=\{n\in\Z:
\bigcap_{i=0}^dT^{-in}U\neq \emptyset\}$; and consider the most
right-corner entry $z_1^d(m)$ in $A^{nm}BC_m$ with $B\in\G_d$ and
$C_m\in\Gamma$ for a given $n\in S$ with $1\le m\le d$. We finish
the proof by showing $S\subset \{ n\in \mathbb{Z}:
||P(n;\alpha_1,\cdots,\alpha_d)||<\epsilon\}$ which implies that $\{
n\in \mathbb{Z}: ||P(n;\alpha_1,\cdots,\alpha_d)||<\epsilon\}\in
\NN_d$.

\subsection{The ordinary polynomial case}

To illustrate the idea of the proof of Theorem C, we first consider
the situation when the generalized polynomials are the ordinary
ones. That is, we want to explain if $p(n)$ is a polynomial of
degree $d$ with $p(0)=0$ and $\ep>0$, how we can find a $d$-step
nilsystem $(X,T)$, and a nonempty open set $U\subset X$ such that
\begin{equation}\label{polynomial}
\{n\in \Z: p(n)\ (\text{mod}\ \Z)\in (-\ep,\ep)\}\supset \{n\in\Z: U
\cap T^{-n}U \cap \ldots\cap T^{-dn}U\neq \emptyset \}.
\end{equation}
To do this define $T_{\alpha,d}:\T^d\lra \T^d$ by
$$T_{\alpha,d}(\theta_1,\theta_2,\ldots,\theta_d)=(\theta_1+\alpha, \theta_2+
\theta_1,\theta_3+ \theta_2, \ldots,\theta_d+ \theta_{d-1}),$$ where
$\alpha\in \mathbb{R}$. A simple computation yields that
\begin{align}\label{computation}
T_{\alpha,d}^n(\theta_1,\ldots,\theta_d)=(\theta_1+n\alpha,
n\theta_1+\theta_2+ \frac{1}{2}n(n-1)\alpha,\ldots,
\sum_{i=0}^d\tbinom{n}{d-i} \theta_i),
\end{align} where
$\theta_0=\alpha$, $n\in \mathbb{Z}$ and $\tbinom{n}{0}=1$,
$\tbinom{n}{i}:=\frac{\prod_{j=0}^{i-1} (n-j)}{i!}$ for
$i=1,2,\cdots,d$.

\medskip
We now prove (\ref{polynomial}) by induction. The case when $d=1$ is
easy, and we assume that for each polynomial of degree $\le d-1$
(\ref{polynomial}) holds. Now let $p(n)=\sum_{i=1}^d\alpha_in^i$
with $\alpha_i\in\R$. By induction for each $1\le i\le d-1$ there is
an $i$-step nilsystem $(X_i,T_i)$ and an open non-empty subset $U_i$
of $X_i$ such that
$$\{n\in\Z: \alpha_i n^i \ ({\rm mod}\ \Z)\in (-\frac{\ep}{d},\frac{\ep}{d})\}\supset
\{n\in\Z: U_i\cap T_i^{-n}U_i\cap \ldots\cap T_i^{-dn}U_i\neq
\emptyset \}.$$

By the Vandermonde's formula, we know
$$ \left(
  \begin{array}{ccccc}
    1 & 2 & 3 & \ldots & d \\
    1 & 2^2 & 3^2& \ldots & d^2 \\
    \vdots & \vdots & \vdots & \vdots & \vdots \\
    1 & 2^{d-1} & 3^{d-1} & \ldots & d^{d-1} \\
    1 & 2^d & 3^d & \ldots & d^d \\
  \end{array}
\right)$$ is a non-singular matrix. Hence there are integers $\ll
_1, \ll_2,\ldots, \ll_d$ and $\ll \in \N$ such that the following
equation holds:
$$
\left(
  \begin{array}{ccccc}
    1 & 2 & 3 & \ldots & d \\
    1 & 2^2 & 3^2& \ldots & d^2 \\
    \vdots & \vdots & \vdots & \vdots & \vdots \\
    1 & 2^{d-1} & 3^{d-1} & \ldots & d^{d-1} \\
    1 & 2^d & 3^d & \ldots & d^d \\
  \end{array}
\right)\left(
         \begin{array}{c}
           \ll_1 \\
            \ll_2 \\
           \vdots \\
            \ll_{d-1} \\
            \ll_d \\
         \end{array}
       \right)=\left(
                 \begin{array}{c}
                   0 \\
                   0 \\
                   \vdots \\
                   0 \\
                   \ll \\
                 \end{array}
               \right)
$$
That is,
\begin{equation}\label{a1}
\begin{split}
      & \sum_{m=1}^d \ll_m m^j=\ll_1+\ll_22^j+\ldots+ \ll_dd^j=0, \ 1\le j\le d-1;\\
       & \sum_{m=1}^d \ll_mm^d=\ll_1+\ll_22^d+\ldots+\ll_d d^d=\ll.
\end{split}
\end{equation}
Now let $T_d=T_{\frac{\alpha_d}{\lambda},d}$ and $Y_d=\T^d$. Let
$K_d=d!\sum_{i=1}^d|\lambda_i|$, $\ep_1>0$ with $K_d\ep_1<\ep/d$ and
$U_d=(-\ep_1,\ep_1)^d$.

It is easy to see that if $n\in \{n\in\Z: U_d\cap T_d^{-n}U_d\cap
\ldots\cap T_d^{-dn}U_d\neq \emptyset \}$ then we know that there is
$(\theta_1,\ldots,\theta_d)\in U_d$ such that
$T_d^{in}(\theta_1,\ldots,\theta_d)\in U_d$ for each $1\le i\le d$.
Thus, by (\ref{computation}) considering the last coordinate we ge
that {\footnotesize
\begin{align*}
\tbinom{n}{d}\theta_0+\tbinom{n}{d-1}\theta_1+\ldots+\tbinom{n}{0}\theta_d
\ (\text{\rm mod} \ \Z)&\in
(-\ep_1,\ep_1)\\
\tbinom{2n}{d}\theta_0+\tbinom{2n}{d-1}\theta_1+\ldots+\tbinom{2n}{0}\theta_d
\ (\text{\rm mod} \ \Z)&\in
(-\ep_1,\ep_1)\\
\ldots \ldots \ldots \ldots \ \ \ \ & \ldots\\
\tbinom{dn}{d}\theta_0+\tbinom{dn}{d-1}\theta_1+\ldots+\tbinom{dn}{0}\theta_d
\ (\text{\rm mod} \ \Z)&\in (-\ep_1,\ep_1),
\end{align*}}
where $\theta_0=\frac{\alpha_d}{\lambda}$. Multiplying
$\tbinom{in}{d}\theta_0+\tbinom{in}{d-1}\theta_1+\ldots+\tbinom{in}{0}\theta_d$
by $\lambda_id!$ and summing over $i=1,\ldots,d$ we get that
$$\sum_{j=1}^d\lambda_jd!\sum_{i=0}^d\tbinom{jn}{d-i}\theta_i=\alpha_d n^d
 \ (\text{\rm mod} \ \Z)\in (-K_d\ep_1,K_d\ep_1)\subset (-\ep/d,\ep/d).$$

Choose $x_i\in U_i$ for $1\le i\le d$. Let
$x=(x_1,x_2,\ldots,x_d)\in X_1\times \ldots\times X_d$ and $X$ be
the orbit $x$ under $T=T_1\times T_2\ldots\times T_d$. Then $(X,T)$
is a $d$-step nilsystem. If we let $U=(U_1\times U_2\times
\ldots\times U_d)\cap X$, then we have (\ref{polynomial}).

\medskip
By the property of nilsystems and the discussion above it is easy to
see
\begin{rem}\label{corr} Let $k\in\N$, $q_i(x)$ be a polynomial of degree $d$ with
$q_i(0)=0$ and $\ep_i>0$ for $1\le i\le k$. Then there are a
$d$-step nilsystem $(X,T,\mu)$ and $B\subset X$ with $\mu(B)>0$ such
that
$$\bigcap_{i=1}^k\{n\in \Z: ||q_i(n)||<\ep_i \}\supset \{n\in\Z: \mu(B\cap
T^{-n}B\cap \ldots\cap T^{-dn}B)>0\}$$
\end{rem}


\subsection{Some preparation}

For $d\in \N$, define
\begin{eqnarray*}
\NN_d&=&\{B\subseteq \Z: \text{there are a minimal $d$-step
nilsystem $(X,T)$ and an open} \\ & & \text{non-empty set $U$ of $X$
with $B\supset \{n\in\Z: U\cap T^{-n}U\cap \ldots\cap T^{-dn}U\neq
\emptyset \}$.} \}
\end{eqnarray*}

Hence Theorem C is equivalent to $$\F_{GP_d}\subseteq \NN_d.$$

\begin{lem}\label{lem-a1}
For each $d\in \N$, $\NN_d$ is a filter.
\end{lem}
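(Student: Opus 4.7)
The plan is straightforward: show that $\NN_d$ is proper and closed under binary intersection (iterating gives closure under finite intersection).

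For properness, $\Z \in \NN_d$ is immediate by taking any minimal $d$-step nilsystem $(X,T)$ with $U=X$, giving the trivial containment $\Z \supset \Z$. To see $\emptyset \notin \NN_d$, note that for any system $(X,T)$ and any nonempty open $U$, the set $\{n\in\Z:U\cap T^{-n}U\cap\cdots\cap T^{-dn}U\neq\emptyset\}$ contains $n=0$ (the intersection reduces to $U$), so it is nonempty and cannot be contained in $\emptyset$.

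For closure under intersection, suppose $B_1,B_2\in\NN_d$ with witnesses $(X_1,T_1),U_1$ and $(X_2,T_2),U_2$ respectively. I would form the product system $(X_1\times X_2,\,T_1\times T_2)$, which is again a $d$-step nilsystem (products of $d$-step nilmanifolds are $d$-step nilmanifolds, as noted after Definition \ref{de-nilsystem}), pick a point $x=(x_1,x_2)\in U_1\times U_2$, and let $X$ be the orbit closure of $x$ under $T_1\times T_2$. Since orbit closures in $d$-step nilsystems are themselves minimal $d$-step nilsystems (this is the fact cited from \cite[Theorem 2.21]{L}), the restricted system $(X,T)$ with $T=(T_1\times T_2)|_X$ is a minimal $d$-step nilsystem. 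Set $U=(U_1\times U_2)\cap X$, which is open in $X$ and nonempty since it contains $x$.

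Now if $n$ lies in $\{n\in\Z:U\cap T^{-n}U\cap\cdots\cap T^{-dn}U\neq\emptyset\}$, there is $y=(y_1,y_2)\in U$ with $T^{in}y\in U\subseteq U_1\times U_2$ for $i=0,1,\ldots,d$, which forces $T_1^{in}y_1\in U_1$ and $T_2^{in}y_2\in U_2$ for each such $i$, hence $n\in B_1\cap B_2$. This yields $B_1\cap B_2\supset\{n\in\Z:U\cap T^{-n}U\cap\cdots\cap T^{-dn}U\neq\emptyset\}$, so $B_1\cap B_2\in\NN_d$. There is no real obstacle here; the only subtle point is the need to pass to an orbit closure rather than using the full product (which need not be minimal), and this is handled by the quoted fact that orbit closures in nilsystems are sub-nilmanifolds.
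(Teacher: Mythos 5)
Your proof is correct and follows essentially the same approach as the paper: pass to the orbit closure of a point in the product system $(X_1\times X_2,\,T_1\times T_2)$ and observe that the resulting return-time set is contained in $B_1\cap B_2$. The only difference is that you choose $x=(x_1,x_2)$ directly inside $U_1\times U_2$ and take $U=(U_1\times U_2)\cap X$, whereas the paper takes an arbitrary minimal point of the product, uses minimality of each $(X_i,T_i)$ to find $k_i$ with $x_i\in T_i^{-k_i}U_i$, and works with $U=(T_1^{-k_1}U_1\times T_2^{-k_2}U_2)\cap X$; both devices are valid because nilsystems are distal, so the orbit closure is minimal no matter which point is chosen, and your version saves the small shift argument. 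You also spell out properness, which the paper leaves implicit.
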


\begin{proof}
Let $B_1, B_2\in \NN_d$. To show $\NN_d$ is a filter, it suffices to
show $B_1\cap B_2\in \NN_d$. By definition, there exist minimal
$d$-step nilsystems $(X_i,T_i), i=1,2$, and a nonempty open set
$U_i$ such that
$$B_i\supset \{n\in\Z: U_i\cap T_i^{-n}U_i\cap
\ldots\cap T_i^{-dn}U_i\neq \emptyset \}.$$ Taking any minimal point
$x=(x_1,x_2)\in X_1\times X_2$, let $X=\overline{{\rm orb}(x,T)}$,
where $T=T_1\times T_2$. Note that $(X,T)$ is also a minimal
$d$-step nilsystem.

Since $(X_i, T_i), i=1,2$, are minimal, there are $k_i\in \N$ such
that $x_i\in T_i^{-k_i}U_i$, $i=1,2$. Let $U=(T_1^{-k_1}U_1\times
T_2^{-k_2}U_2 )\cap X$, then $U$ is an open set of $X$. Note that
{\footnotesize
\begin{equation*}
\begin{split}&\hskip0.6cm \{n\in\Z: U\cap T^{-n}U\cap \ldots\cap T^{-dn}U\neq \emptyset \}
       \\&=\bigcap_{i=1,2}\{n\in\Z: T_i^{-k_i}U_i\cap T_i^{-(k_i+n)}U_i\cap \ldots
       \cap T_i^{-(k_i+dn)}U_i\neq \emptyset \}\\
       &=\bigcap_{i=1,2} \{n\in\Z: U_i\cap T_i^{-n}U_i\cap \ldots\cap T_i^{-dn}U_i\neq \emptyset \}
\end{split}
\end{equation*}}
Hence $$B_1\cap B_2\supset \{n\in\Z: U\cap T^{-n}U\cap \ldots\cap
T^{-dn}U\neq \emptyset \}.$$ That is, $B_1\cap B_2\in \NN_d$ and
$\NN_d $ is a filter.
\end{proof}

\begin{de}
For $r\in \N$, define
\begin{equation*}
    \widehat{GP}_r=\{p(n)\in GP_r: \{n\in \Z: p(n) \ ({\rm mod}\ \Z) \in (-\ep,\ep)\}\in \NN_r, \forall
    \ep>0\}.
\end{equation*}
\end{de}

\begin{rem}\label{rem-a1}
It is clear that for $p(n)\in GP_r$, $p(n)\in \widehat{GP}_r$ if and
only if $-p(n)\in \widehat{GP}_r$. Since $\NN_r$ is a filter, if
$p_1(n),p_2(n),\cdots,p_k(n)\in \widehat{GP}_r$ then
$$p_1(n)+p_2(n)+\cdots+p_k(n)\in \widehat{GP}_r.$$ Moreover by the definition
of $\widehat{GP}_r$, we know that $\mathcal{F}_{GP_r}\subset \NN_r$
if and only if $\widehat{GP}_r=GP_r$.
\end{rem}

Since we will use induction to show Theorem C, thus we need to
obtain some results under the following assumption, that is for some
$d\ge 2$,
\begin{equation}\label{induction-d-1}
\F_{GP_{d-1}}\subseteq \NN_{d-1}.
\end{equation}

\begin{lem}\label{lem-a2}
Let $p(n),q(n)\in GP_d$ with $p(n)\simeq_d q(n)$. Under the
assumption \eqref{induction-d-1}, $p(n)\in \widehat{GP}_d$ if and
only if $q(n)\in \widehat{GP}_d$.
\end{lem}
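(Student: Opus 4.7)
The plan is to mirror the proof of Lemma \ref{lem-8-equi-new}, with $\mathcal{F}_{d,0}$ replaced by $\NN_d$ and Lemma \ref{lem-8-11} replaced by its $\NN$-analog. Unpacking $p(n)\simeq_d q(n)$, I will write $q(n)=p(n)+h_1(n)+h_2(n) \pmod{\Z}$ with $h_1(n)\in GP_{d-1}$ and $h_2(n)\in \mathcal{W}_d$. Since $\simeq_d$ is symmetric (negate $h_1$ and $h_2$, which lie in classes closed under $\R$-scaling), it is enough to prove the forward direction: $p\in \widehat{GP}_d \Rightarrow q\in \widehat{GP}_d$.

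The argument will rest on three ingredients: (i) $\NN_d$ is a filter, which is Lemma \ref{lem-a1}; (ii) the trivial inclusion $\NN_{d-1}\subseteq \NN_d$, since any minimal $(d-1)$-step nilsystem is in particular a minimal $d$-step nilsystem; and (iii) the following $\NN$-analog of Lemma \ref{lem-8-11}, which I would establish first: under the inductive hypothesis \eqref{induction-d-1}, for every $w(n)\in \mathcal{W}_d$ and $\epsilon>0$,
$$\{n\in \Z:\ ||w(n)||<\epsilon\}\in \NN_{d-1}.$$
Ingredient (iii) will be obtained by transcribing the proof of Lemma \ref{lem-8-11} almost verbatim, substituting $\NN_{d-1}$ for $\F_{SGP_{d-1}}$: the filter property is supplied by Lemma \ref{lem-a1} applied at level $d-1$, and the appeal to the earlier inductive hypothesis is replaced by \eqref{induction-d-1}, namely $\mathcal{F}_{GP_{d-1}}\subseteq \NN_{d-1}$. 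Concretely, after reducing to a single term $aq(n)$ with $q(n)=\prod_{i=1}^\ell (w_i(n)-\lceil w_i(n)\rceil)$, $\ell\ge 2$, $w_i\in GP_{r_i}$, $\sum r_i\le d$, the condition $\ell\ge 2$ forces $r_1\le d-1$, so $w_1\in GP_{d-1}$, and the containment
$$\{n\in \Z:\ |aq(n)|<\delta\}\supseteq \Big\{n\in \Z:\ ||w_1(n)||<\tfrac{\delta}{1+|a|}\Big\}$$
places the set in $\mathcal{F}_{GP_{d-1}}\subseteq \NN_{d-1}$.

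Once (i)--(iii) are in hand, the conclusion follows immediately: for any $\epsilon>0$,
$$\{n:\ ||q(n)||<\epsilon\}\supseteq \{n:\ ||p(n)||<\tfrac{\epsilon}{3}\}\cap \{n:\ ||h_1(n)||<\tfrac{\epsilon}{3}\}\cap\{n:\ ||h_2(n)||<\tfrac{\epsilon}{3}\},$$
where the first set on the right lies in $\NN_d$ since $p\in \widehat{GP}_d$, the second in $\NN_{d-1}\subseteq \NN_d$ since $h_1\in GP_{d-1}$ and by \eqref{induction-d-1}, and the third in $\NN_{d-1}\subseteq \NN_d$ by ingredient (iii). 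The filter property of $\NN_d$ keeps the intersection inside $\NN_d$, and hence $q\in \widehat{GP}_d$. The main (and only) obstacle is ingredient (iii); since the proof of Lemma \ref{lem-8-11} transcribes essentially mechanically to the $\NN$-setting, I expect this to be routine once the bookkeeping of which filter/inductive hypothesis is used at each step is carried out carefully.
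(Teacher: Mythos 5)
Your proposal is correct and follows essentially the same route as the paper's one-line proof, which cites Lemma \ref{lem-8-11}, the filter property of $\NN_d$, and the chain $\F_{GP_{d-1}}\subseteq\NN_{d-1}\subseteq\NN_d$. The only cosmetic difference is that you re-derive the $\NN_{d-1}$-analog of Lemma \ref{lem-8-11} directly from \eqref{induction-d-1} rather than quoting the $\F_{d-1,0}$-version and converting via $\F_{d-1,0}=\F_{GP_{d-1}}\subseteq\NN_{d-1}$; both amount to the same bookkeeping.
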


\begin{proof}
It follows from Lemma \ref{lem-8-11}, $\NN_d$ being a filter and
$\F_{GP_{d-1}}\subseteq \NN_{d-1}\subseteq \NN_d$.
\end{proof}

\begin{thm} \label{thm-a1}
Under the assumption \eqref{induction-d-1}, the following properties
are equivalent:
\begin{enumerate}
\item  $\mathcal{F}_{GP_d}\subseteq \NN_d$.

\item $P(n;\alpha_1,\alpha_2,\cdots,\alpha_d)\in \widehat{GP}_d$ for any $\alpha_1,\alpha_2,\cdots,\alpha_d\in \mathbb{R}$, that is
$$\{ n\in \mathbb{Z}: P(n;\alpha_1,\alpha_2,\cdots,\alpha_d) \ (\text{\rm mod} \ \Z) \in (-\epsilon, \epsilon)\}\in \NN_d$$ for any
$\alpha_1,\alpha_2,\cdots,\alpha_d\in \mathbb{R}$ and $\epsilon>0$.

\item $\text{SGP}_d\subset \widehat{GP}_d$.
\end{enumerate}
\end{thm}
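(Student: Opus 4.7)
The plan is to mirror the proof of Theorem \ref{lem-trans} almost verbatim, replacing $\mathcal{F}_{d,0}$ by $\NN_d$ and $\widetilde{GP}_d$ by $\widehat{GP}_d$ throughout. The mechanism transfers because $\NN_d$ is a filter (Lemma \ref{lem-a1}) and the simplification machinery based on $\simeq_d$ and Lemmas \ref{simple2}, \ref{simple3}, \ref{lem-8-11}, \ref{lem-a2} applies equally well in this context once the inductive assumption \eqref{induction-d-1} is in place.

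First, $(1)\Rightarrow(2)$ is immediate from Remark \ref{rem-a1}: the hypothesis $\mathcal{F}_{GP_d}\subseteq \NN_d$ is equivalent to $\widehat{GP}_d=GP_d$, so in particular every $P(n;\alpha_1,\ldots,\alpha_d)$ lies in $\widehat{GP}_d$. The implication $(3)\Rightarrow(1)$ is equally short: from $\text{SGP}_d\subset \widehat{GP}_d$ and the filter property of $\NN_d$ one obtains $\mathcal{F}_{SGP_d}\subseteq \NN_d$, and Theorem \ref{gpsgp} upgrades this to $\mathcal{F}_{GP_d}=\mathcal{F}_{SGP_d}\subseteq \NN_d$.

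The substance lies in $(2)\Rightarrow(3)$. Order $\Sigma_d$ lexicographically so that $(d)$ is maximal and $(1,1,\ldots,1)$ is minimal, and for $\mathbf{j}=(j_1,\ldots,j_\ell)\in\Sigma_d$ set $\mathcal{L}(\mathbf{j})=\{L(n^{j_1}a_1,\ldots,n^{j_\ell}a_\ell):a_1,\ldots,a_\ell\in\R\}$. The goal is to prove $\mathcal{L}(\mathbf{s})\subseteq\widehat{GP}_d$ for every $\mathbf{s}\in\Sigma_d$ by induction on this order; since $\text{SGP}_d=\bigcup_{\mathbf{j}\in\Sigma_d}\mathcal{L}(\mathbf{j})$, this gives (3). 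For the base case $\mathbf{s}=(d)$, given $a_1\in\R$ choose $\alpha_1=1,\alpha_2=2,\ldots,\alpha_{d-1}=d-1$ and $\alpha_d=da_1$; then for $1\le j_1\le d-1$ the quantity $\tfrac{n^{j_1}}{j_1!}\prod_{t=1}^{j_1}\alpha_t$ is an integer, so $P(n;\alpha_1,\ldots,\alpha_d)\equiv L(n^d a_1)\pmod\Z$ and (2) forces $L(n^d a_1)\in\widehat{GP}_d$. For the inductive step at $\mathbf{i}=(i_1,\ldots,i_k)$, given $a_1,\ldots,a_k\in\R$, set $\alpha_{\sum_{t<r}i_t+h}=h$ for $1\le h<i_r$ and $\alpha_{\sum_{t\le r}i_t}=i_r a_r$. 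Every summand of \eqref{key-poly} indexed by $\mathbf{j}>\mathbf{i}$ lies in $\widehat{GP}_d$ by the induction hypothesis; every summand indexed by $\mathbf{j}<\mathbf{i}$ either evaluates identically to an integer (when the first block of $\mathbf{j}$ strictly shortens that of $\mathbf{i}$, so the corresponding product of $\alpha$'s lies in $\Z$) or, via repeated application of Lemma \ref{lem8-13} which merges consecutive blocks, collapses modulo $\simeq_d$ to an $L$-expression indexed by a tuple strictly larger than $\mathbf{i}$ in $\Sigma_d$, hence again in $\widehat{GP}_d$ by the induction hypothesis and Lemma \ref{lem-a2}. The only surviving summand is a nonzero constant multiple of $L(n^{i_1}a_1,\ldots,n^{i_k}a_k)$, which together with (2), Remark \ref{rem-a1} and Lemma \ref{lem-a2} yields $\mathcal{L}(\mathbf{i})\subseteq\widehat{GP}_d$.

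The main obstacle is the combinatorial bookkeeping in $(2)\Rightarrow(3)$: one must verify that the chosen rational substitution makes precisely the target summand survive modulo $\Z$ up to terms already in $\widehat{GP}_d$, and that the nested $\lceil\cdot\rceil$ identities correctly rewrite the small-tuple terms into $L$-expressions with strictly longer leading blocks. No essentially new idea beyond those used in Theorem \ref{lem-trans} is required; the only adjustments are invoking Lemma \ref{lem-a2} in place of Lemma \ref{lem-8-equi-new} and the filter property from Lemma \ref{lem-a1} in place of the filter property of $\mathcal{F}_{d,0}$.
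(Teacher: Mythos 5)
Your proposal is correct and follows the paper's own route exactly: the paper's proof of Theorem~\ref{thm-a1} simply states "The proof is similar to that of Theorem~\ref{lem-trans}," and your write-out carries out precisely that substitution, replacing $\mathcal{F}_{d,0}$ by $\NN_d$, $\widetilde{GP}_d$ by $\widehat{GP}_d$, and invoking Lemma~\ref{lem-a1}, Lemma~\ref{lem-a2}, and Remark~\ref{rem-a1} in place of their $\mathcal{F}_{d,0}$-counterparts. The downward induction over $\Sigma_d$ for $(2)\Rightarrow(3)$, the choice of rational substitutions making lower-indexed summands integral or mergeable via Lemma~\ref{lem8-13}, and the use of $\simeq_d$-equivalence are all faithfully reproduced.
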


\begin{proof}
The proof is similar to that of Theorem \ref{lem-trans}.
\end{proof}

\subsection{Proofs of Theorem C}

Now we prove $\F_{GP_d}\subseteq \NN_d$ by induction on $d$. When
$d=1$, since $\F_{GP_1}=\F_{SGP_1}$ and $\NN_d$ is a filer, it is
sufficient to show that: for any $p(n)=an\in SGP_1$ and $\ep>0$, we
have
\begin{equation}\label{}
    \{n\in \Z: p(n) \ ({\rm mod}\ \Z) \in (-\ep, \ep)\}\in \NN_1.
\end{equation}
This is easy to be verified.

Now we assume that for $d\ge 2$, $\F_{GP_{d-1}}\subseteq \NN_{d-1}$,
i.e. (\ref{induction-d-1}) holds.
Then it follows from Theorem \ref{thm-a1} that under the assumption
\eqref{induction-d-1},  to show $\mathcal{F}_{GP_d}\subseteq \NN_d$,
it is sufficient to show that
\begin{equation*}
    P(n;\b_1,\b_2,\ldots,\b_d)\in \widehat{GP}_d,
\end{equation*} for any $\beta_1,\beta_2,\cdots,\beta_d\in \mathbb{R}$.

Fix $\b_1,\b_2,\ldots,\b_d\in \R$. We divide the remainder of the
proof into two steps.

\medskip
\noindent{\bf Step 1.} We are going to show
\begin{equation*}
P(n;\b_1,\b_2,\cdots,\b_d)\simeq_d \sum \limits_{\ell=1}^d \sum
\limits_{j_1,\cdots j_\ell\in \mathbb{N}\atop
j_1+\cdots+j_\ell=d}(-1)^{\ell-1} \ll U(n;j_1,j_2,\cdots,j_{\ell}),
\end{equation*}
where as in the proof of Theorem B, we define
\begin{equation}\label{}
U(n;j_1)=\frac{n^{j_1}}{j_1!}\prod_{r=1}^{j_1}\alpha_r, \ 1\le
j_1\le d.
\end{equation}
And inductively for $\ell=2,3,\cdots,d$ define
\begin{align*}
U(n;j_1,j_2,\cdots,j_\ell) &=(U(n;j_1,\cdots,j_{\ell-1})-\lceil
U(n;j_1,\cdots,j_{\ell-1})\rceil )
\frac{n^{j_\ell}}{j_\ell!}\prod_{r=1}^{j_\ell}
\alpha_{\sum_{t=1}^{\ell-1}j_t+r}\\
&=(U(n;j_1,\cdots,j_{\ell-1})-\lceil
U(n;j_1,\cdots,j_{\ell-1})\rceil
)L(\frac{n^{j_\ell}}{j_\ell!}\prod_{r_{\ell}=1}^{j_\ell}
\alpha_{\sum_{t=1}^{\ell-1}j_t+r_{\ell}})
\end{align*}
for $j_1,j_2,\cdots,j_\ell\ge 1$ and $j_1+\cdots+j_\ell\le d$ (see
\eqref{eq-de-L} for the definition of $L$).

\medskip

In fact, let $\ll _1, \ll_2,\ldots, \ll_d\in \mathbb{Z}$ and $\ll
\in \N$ satisfying \eqref{a1}. Put
$$\a_1=\b_1/\ll, \a_2=\b_2, \a_3=\b_3,\ldots, \a_d=\b_d.$$ Then
\begin{equation*}
    P(n;\b_1,\b_2,\ldots,\b_d)=\ll P(n;\a_1,\a_2,\ldots,\a_d).
\end{equation*}

Note that in proof of Theorem B we have
\begin{equation}\label{}
P(n;\alpha_1,\alpha_2,\cdots,\alpha_d)\simeq_d \sum
\limits_{\ell=1}^d \sum \limits_{j_1,\cdots j_\ell\in
\mathbb{N}\atop j_1+\cdots+j_\ell=d}(-1)^{\ell-1}
U(n;j_1,j_2,\cdots,j_{\ell}).
\end{equation}
Since $\ll$ is an integer, we have
\begin{equation*}
\ll P(n;\alpha_1,\alpha_2,\cdots,\alpha_d)\simeq_d \sum
\limits_{\ell=1}^d \sum \limits_{j_1,\cdots j_\ell\in
\mathbb{N}\atop j_1+\cdots+j_\ell=d}(-1)^{\ell-1} \ll
U(n;j_1,j_2,\cdots,j_{\ell}).
\end{equation*}
That is,
\begin{equation*}
P(n;\b_1,\b_2,\cdots,\b_d)\simeq_d \sum \limits_{\ell=1}^d \sum
\limits_{j_1,\cdots j_\ell\in \mathbb{N}\atop
j_1+\cdots+j_\ell=d}(-1)^{\ell-1} \ll U(n;j_1,j_2,\cdots,j_{\ell}).
\end{equation*}
Hence, by Lemma \ref{lem-a2}, to show $P(n;\b_1,\b_2,\cdots,\b_d)\in
\widehat{GP}_d$, it suffices to show
\begin{equation}\label{a2}
\sum \limits_{\ell=1}^d \sum \limits_{j_1,\cdots j_\ell\in
\mathbb{N}\atop j_1+\cdots+j_\ell=d}(-1)^{\ell-1} \ll
U(n;j_1,j_2,\cdots,j_{\ell})\in \widehat{GP}_d.
\end{equation}

Now choose ${\bf x}=(x_i^k)_{1\le k\le d, 1\le i\le d-k+1}\in
\mathbb{R}^{d(d+1)/2}$ with $x_i^1=\alpha_i$ for $i=1,2,\cdots,d$
and $x_i^k=0$ for $2\le k\le d$ and $1\le i\le d-k+1$. Let
{\footnotesize
$$
A=\M({\bf x})=\left(
  \begin{array}{cccccccc}
    1 & \alpha_1 & 0                &\ldots & 0 &0\\
    0 & 1     & \alpha_2            &\ldots &  0&0\\
    \vdots & \vdots &   \vdots & \vdots &\vdots\\
    0 & 0    &0                    & \ldots & \alpha_{d-1} &0 \\
    0 & 0    &0                    & \ldots & 1 &\alpha_d \\
    0 & 0    &0                   & \ldots & 0  & 1
  \end{array}
\right)
$$}

For $n\in \mathbb{N}$, if ${\bf x}(n)=(x_i^k(n))_{1\le k\le d, 1\le
i\le d-k+1}\in \mathbb{R}^{d(d+1)/2}$ satisfies $\M({\bf
x}(n))=A^n$, then by Lemma \ref{lem-8-ite} and Remark
\ref{rem-8-ite},
\begin{equation}\label{a3}
x_i^k(n)=\tbinom{n}{k}\alpha_i \alpha_{i+1}\cdots \alpha_{i+k-1}
\end{equation}
for $1\le k\le d$ and $1\le i\le d-k+1$.

\medskip

Let $X=\G_d/\Gamma$ be endowed with the metric $\rho$ in Lemma
\ref{matix-metric} and $T$ be the nilrotation induced by $A\in\G_d$,
i.e. $B\Gamma\mapsto AB\Gamma$ for $B\in\G_d$. Since $\G_d$ is a
$d$-step nilpotent Lie group and $\Gamma$ is a uniform subgroup of
$\G_d$, $(X,T)$ is a $d$-step nilsystem. Let $x_0=\Gamma\in X$ and
$Z$ be the closure of the orbit $\text{orb}(x_0,T)$ of $e$ in $X$.
Then $(Z,T)$ is a minimal $d$-step nilsystem. We consider $\rho$ as
a metric on $Z$.

\medskip
\noindent{\bf Step 2.} For any $\ep>0$, we are going to show there
is a nonempty open set $U$ of $Z$ such that
\begin{equation}\label{a0}
\begin{split}
& \ \{n\in \Z: \sum \limits_{\ell=1}^d \sum \limits_{j_1,\cdots
j_\ell\in \mathbb{N}\atop j_1+\cdots+j_\ell=d}(-1)^{\ell-1} \ll
U(n;j_1,j_2,\cdots,j_{\ell}) \ ({\rm mod}\ \Z)\in (-\ep,\ep)\}\\
&\supseteq \{n\in\Z: U\cap T^{-n}U\cap \ldots\cap T^{-dn}U\neq
\emptyset\}.
\end{split}
\end{equation}
That means $\sum \limits_{\ell=1}^d \sum \limits_{j_1,\cdots
j_\ell\in \mathbb{N}\atop j_1+\cdots+j_\ell=d}(-1)^{\ell-1} \ll
U(n;j_1,j_2,\cdots,j_{\ell})\in \widehat{GP}_d$.

\medskip

Fix an $\ep>0$. Take $\displaystyle
\ep_2=\min\Big\{\frac{\ep}{2K(\sum_{i=0}^{d-1}d^i)}, \frac
14\Big\}$, where $\displaystyle
K=\sum_{m=1}^d|\lambda_m|\Big(\sum_{t=0}^dm^t\Big)$, and let
$\ep_1>0$ be small enough such that $e^{\ep_1+\ep_1^2+\ldots
+\ep_1^d}-1<\ep_2$. Let
\begin{equation*}
U=\{z\in Z: \rho(z,x_0)<\ep_1\}=\{c\Gamma\in Z: \rho(c\Gamma,
\Gamma)<\ep_1\}.
\end{equation*}
and let
\begin{equation*}
S=\{n\in \Z: U\cap T^{-n}U\cap \ldots \cap T^{-dn}U\neq \emptyset
\}.
\end{equation*}

 Now we show that
\begin{equation*}
   S\subseteq \Big\{n\in \Z: \sum \limits_{\ell=1}^d \sum \limits_{j_1,\cdots
j_\ell\in \mathbb{N}\atop j_1+\cdots+j_\ell=d}(-1)^{\ell-1} \ll
U(n;j_1,j_2,\cdots,j_{\ell}) \ ({\rm mod}\ \Z)\in (-\ep,\ep)\Big\}
\end{equation*}
Let $n\in S$. Then $U\cap T^{-n}U\cap \ldots \cap T^{-dn}U\neq
\emptyset$. Hence there is some $B\in \G_d$ with
\begin{equation*}
    B\Gamma \in U\cap T^{-n}U\cap \ldots \cap T^{-dn}U.
\end{equation*}
Thus
\begin{equation*}
\rho(A^{mn}B\Gamma, \Gamma)<\ep_1, \ m=0,1,2,\ldots, d-1.
\end{equation*}
Since $\rho$ is right translation invariant, we may assume that
$\rho(B, I)<\ep_1$, where $I$ is the $(d+1)\times (d+1)$ identity
matrix.

For each $m\in \{1,2,\ldots, d\}$, since $\rho(A^{mn}B\Gamma,
\Gamma)<\ep_1$ there is some $C_m\in \Gamma$ such that
\begin{equation}\label{a5-1}
    \rho(A^{mn}BC_m,I)<\ep_1.
\end{equation}
Let $A^{mn}BC_m=\M({\bf z}(m))$, where ${\bf z}(m)=(z_i^k(m))_{1\le
k\le d, 1\le i\le d-k+1}\in \mathbb{R}^{d(d+1)/2}$. Then From
\eqref{a5-1}, we have $||A^{mn}BC_m-I||<\ep_2$ by Lemma
\ref{lem-matrix}, thus
$$|z_i^k(m)|<\ep_2, \quad 1\le k\le d, 1\le
i\le d-k+1. $$

On the one hand, since $|z_1^d(m)|<\ep_2$, we have
\begin{equation}\label{a8}
    \sum_{m=1}^d \ll_m z_1^d(m)\in (-K\ep_2, K\ep_2).
\end{equation}

On the other hand, we have

\medskip
\begin{align}\label{longproof}
\begin{split}
\sum_{m=1}^d\ll_mz_1^d(m) \approx&
\Big(\sum_{l=1}^d(-1)^{l-1}\sum_{\substack{j_1,j_2,\ldots,j_l\in \N\\
j_1+j_2+\ldots +j_l=d}} \ll U(n;j_1,j_2,\ldots,j_l)\Big)\\
&+\vartriangle\Big((d+d^2+\ldots+d^{d-1})(2K\ep_2)\Big).
\end{split}
\end{align}
Note that for $a,b\in \R$ and $\d>0$, $a\thickapprox b+
\vartriangle(\d)$ means that $a-b \ ({\rm mod}\ \Z)\in (- \d,\d)$.

\medskip
Since the proof of (\ref{longproof}) is long, we put it after
Theorem C. Now we continue the proof. By (\ref{longproof}) and
(\ref{a8}), we have {\footnotesize
\begin{equation*}
\begin{split}
\sum_{l=1}^d(-1)^{l-1}\sum_{\substack{j_1,j_2,\ldots,j_l\in \N\\
j_1+j_2+\ldots +j_l=d}}\ll U(n;j_1,j_2,\ldots,j_l) \ ({\rm mod}\
\Z)&\in\Big(-M(2K\ep_2),M(2K\ep_2)\Big)\subseteq (-\ep,\ep),
\end{split}
\end{equation*}}
where $M=1+d+\ldots+d^{d-1}$. This means that
\begin{equation*}
    n\in \Big\{q\in \Z:\sum_{l=1}^d\sum_{\substack{j_1,j_2,\ldots,j_l\in \N\\
j_1+j_2+\ldots +j_l=d}}(-1)^{l-1}\ll U(q;j_1,j_2,\ldots,j_l) \ ({\rm
mod}\ \Z)\in (-\ep,\ep)\Big \}.
\end{equation*}
Hence
\begin{equation*}
   S\subseteq  \Big\{q\in \Z:\sum_{l=1}^d\sum_{\substack{j_1,j_2,\ldots,j_l\in \N\\
j_1+j_2+\ldots +j_l=d}}(-1)^{l-1}\ll U(q;j_1,j_2,\ldots,j_l) \ ({\rm
mod}\ \Z)\in (-\ep,\ep)\Big \}.
\end{equation*}
Thus we have proved (\ref{a0}) which means $\sum \limits_{\ell=1}^d
\sum \limits_{j_1,\cdots j_\ell\in \mathbb{N}\atop
j_1+\cdots+j_\ell=d}(-1)^{\ell-1} \ll
U(n;j_1,j_2,\cdots,j_{\ell})\in \widehat{GP}_d$. The proof of
Theorem C is now finished.

\subsection{Proof of (\ref{longproof})} Since $\rho(B,I)<\ep_1$, by Lemma \ref{lem-matrix},
\begin{equation}\label{a4}
    ||B-I||<e^{\ep_1+\ep_1^2+\ldots
+\ep_1^d}-1<\ep_2<1/2.
\end{equation}

Denote $B=\M({\bf y})$, where ${\bf y}=(y_i^k)_{1\le k\le d, 1\le
i\le d-k+1}\in \mathbb{R}^{d(d+1)/2}$. From (\ref{a4}),
$$|y_i^k|<\ep_2, \quad 1\le k\le d,\ 1\le
i\le d-k+1. $$ For $m=1,2,\cdots,m$, Recall that $C_m\in \Gamma$
satisfies
\begin{equation}\label{a5}
    \rho(A^{mn}BC_m,I)<\ep_1.
\end{equation}
Denote $C_m=\M({\bf h}(m))$, where ${\bf h}(m)=(-h_i^k(m))_{1\le
k\le d, 1\le i\le d-k+1}\in \mathbb{Z}^{d(d+1)/2}$. From (\ref{a5}),
we have $$||A^{mn}BC_m-I||<\ep_2, \ m=1,2 ,\ldots, d.$$ Let
$A^{mn}B=\M({\bf w}(m))$, where ${\bf w}(m)=(w_i^k(m))_{1\le k\le d,
1\le i\le d-k+1}\in \mathbb{R}^{d(d+1)/2}$. Then
\begin{equation}\label{a6}
\begin{split}
w_i^k(m)&=x_i^k(mn)+\Big( \sum
\limits_{j=1}^{k-1}x_i^{j}(mn)y_{i+j}^{k-j} \Big)+y_i^k\\
&=\tbinom{mn}{k}\alpha_i \alpha_{i+1}\cdots
\alpha_{i+k-1}+\sum_{j=1}^{k-1}\tbinom{mn}{j}\alpha_i
\alpha_{i+1}\cdots \alpha_{i+j-1}y_{i+j}^{k-j}+y_i^k\\
&\triangleq \frac{(mn)^k}{k!}\a_i\ldots \a_{i+k-1}+\sum_{j=1}^{k-1}
m^j a_i^k(j)+a_i^k(0),
\end{split}
\end{equation}
where $m=1,2,\ldots, d$, $a_i^k(j)$ does not depend on $m$ and
$|a_i^k(0)|=|y_i^k|<\ep_2$.

Recall that ${\bf z}(m)=(z_i^k(m))_{1\le k\le d, 1\le i\le d-k+1}\in
\mathbb{R}^{d(d+1)/2}$ satisfies $A^{mn}BC_m=\M({\bf z}(m))$. Hence
\begin{equation}\label{addye}
z_i^k(m)= w_i^k(m)-\Big( \sum \limits_{j=1}^{k-1} w_i^{j}(m)
h_{i+j}^{k-j}(m) \Big) -h_i^k(m).
\end{equation}
From $||A^{mn}BC_m-I||<\ep_2$, we have
$$|z_i^k(m)|<\ep_2, \quad 1\le k\le d, 1\le
i\le d-k+1. $$ Note that $h_i^k(m)\in \Z$, and we have
$$h_i^k(m)=\Big \lceil w_i^k(m)- \sum
\limits_{j=1}^{k-1}w_i^{j}(m)h_{i+j}^{k-j}(m) \Big
\rceil.$$ Let
$$u_i^k(m)=w_i^k(m)- \sum
\limits_{j=1}^{k-1}w_i^{j}(m)h_{i+j}^{k-j}(m) .$$ Then
\begin{equation*}
    |u_i^k(m)-h_i^k(m)|=|z_i^k(m)|<\ep_2<1/2.
\end{equation*}

\medskip

Recall that for $a,b\in \R$ and $\d>0$, $a\thickapprox b+
\vartriangle(\d)$ means $a-b \ ({\rm mod}\ \Z)\in (- \d,\d)$.

\bigskip

\noindent {\bf Claim:} {\em Let $1\le r\le d-1$ and $v_r(0),
v_r(1),\ldots,v_r(r)\in \R$. Then for each $1\le r_1\le d-r-1$ and
$1\le j\le r_1+r$, there exist $v_{r,r_1}(j)\in \R$ such that
\begin{enumerate}
\item
\begin{equation*}
\begin{split}
&\sum_{m=1}^d  \ll_m \Big(\sum_{t=0}^r m^t
v_r(t)\Big)h_{1+r}^{d-r}(m) \thickapprox \ll(v_r(r)-\lceil
v_r(r)\rceil)\frac{n^{d-r}}{(d-r)!}\a_{1+r}\ldots
\a_d \\
&\hskip3.cm-\sum_{r_1=1}^{d-r-1}\sum_{m=1}^d\ll_m\Big(\sum_{t=1}^{r_1+r}m^tv_{r,r_1}(t)\Big)h_{1+r+r_1}^{d-r-r_1}(m)+
\vartriangle(2K\ep_2)
\end{split}
\end{equation*}

\item $\displaystyle v_{r,r_1}(r+r_1)=\Big( v_r-\lceil v_r\rceil\Big)\frac{n^{r_1}}{r_1!}
\a_{r+1}\ldots \a_{r+r_1}$ for all $1\le r_1\le d-r-1$.
\end{enumerate}}

\begin{proof}[Proof of Claim]
Since $|u_{1+r}^{d-r}(m)-h_{1+r}^{d-r}(m)|<\ep_2$, we have
\begin{equation*}
\begin{split}
\Big | \sum_{m=1}^d \ll_m\Big(\sum_{t=0}^rm^t(v_r(r)-\lceil
v_r(r)\rceil)\Big) \Big| &\le \sum_{m=1}^d
|\ll_m|\Big(\sum_{t=0}^rm^t\Big)\\
&\le \Big(\sum_{m=1}^d |\ll_m|\Big)\Big(\sum_{t=0}^rm^t\Big)=K.
\end{split}
\end{equation*}
Hence
\begin{equation}\label{a7}
\begin{split}&\sum_{m=1}^d \ll_m \Big(\sum_{t=0}^r
m^tv_r(t)\Big)h_{1+r}^{d-r}(m)\\ &\thickapprox  \sum_{m=1}^d \ll_m
\Big(\sum_{t=0}^r m^t (
v_r(t)-\lceil v_r(t)\rceil)\Big)h_{1+r}^{d-r}(m)\\
&\thickapprox  \sum_{m=1}^d \ll_m \Big(\sum_{t=0}^r m^t (
v_r(t)-\lceil v_r(t)\rceil
)\Big)u_{1+r}^{d-r}(m)+\vartriangle(K\ep_2).
\end{split}
\end{equation}

Then we have
\begin{equation*}
\begin{split}
&\sum_{m=1}^d \ll_m \Big(\sum_{t=0}^r m^t ( v_r(t)-\lceil
v_r(t)\rceil )\Big)u_{1+r}^{d-r}(m)\\
&\sum_{m=1}^d \ll_m \Big(\sum_{t=0}^r m^t ( v_r(t)-\lceil
v_r(t)\rceil)\Big)\Big(w_{1+r}^{d-r}(m)- \sum
\limits_{r_1=1}^{d-r-1}w_{1+r}^{r_1}(m)h_{1+r+r_1}^{d-r-r_1}(m)
\Big).
\end{split}
\end{equation*}

From (\ref{a6}) we have
\begin{equation*}
\begin{split}
&\sum_{m=1}^d \ll_m \Big(\sum_{t=0}^r m^t ( v_r(t)-\lceil
v_r(t)\rceil)\Big)w_{1+r}^{d-r}(m)\\ &= \sum_{m=1}^d \ll_m
\Big(\sum_{t=0}^r m^t ( v_r(t)-\lceil v_r(t)\rceil)\Big)
\Big(\frac{(mn)^{d-r}}{(d-r)!}\a_{1+r}\ldots
\a_{d}+\sum_{j=0}^{d-r-1} m^j a_{1+r}^{d-r}(j)\Big)\\
&= \sum_{m=1}^d \ll_m m^d \frac{n^{d-r}}{(d-r)!}\a_{1+r}\ldots
\a_{d} \Big( v_r(t)-\lceil v_r(t)\rceil \Big)\\
&\hskip4.cm+ \sum_{h=1}^{d-1}\sum_{m=1}^d \ll_m m^h \Bigg(\sum_{0\le
t\le r\atop{0\le j\le d-r-1\atop{t+j=h}}}( v_r(t)-\lceil
v_r(t)\rceil )a_{1+r}^{d-r}(j)\Bigg)\\
&\hskip6.cm+\sum_{m=1}^d \ll_m \Big( v_r(0)-\lceil
v_r(0)\rceil\Big)a_{1+r}^{d-r}(0),
\end{split}
\end{equation*}
and so $\sum_{m=1}^d \ll_m \Big(\sum_{t=0}^r m^t (
v_r(t)-\lceil v_r(t)\rceil)\Big)w_{1+r}^{d-r}(m)$ is equal to

\begin{equation*}
\begin{split}
&  \ll \frac{n^{d-r}}{(d-r)!}\a_{1+r}\ldots \a_{d} (
v_r(t)-\lceil v_r(t)\rceil)+(\sum_{m=1}^d \ll_m )
( v_r(0)-\lceil v_r(0)\rceil)y_{1+r}^{d-r}\\
&\approx  \ll \frac{n^{d-r}}{(d-r)!}\a_{1+r}\ldots \a_{d} (
v_r(t)-\lceil v_r(t)\rceil )+ \vartriangle (K\ep_2).
\end{split}
\end{equation*}
The last equation follows from
$$\Big |(\sum_{m=1}^d \ll_m)
( v_r(0)-\lceil v_r(0)\rceil)y_{1+r}^{d-r}\Big |\le
\sum_{m=1}^d |\ll_m|\ep_2<K\ep_2.$$ Then for $1\le r_1\le d-r-1$, by
(\ref{a6}), we have

\begin{equation*}
\begin{split}
& \sum_{m=1}^d\ll_m \Big(\sum_{t=0}^r m^t ( v_r(t)-\lceil
v_r(t)\rceil)\Big)w_{1+r}^{r_1}(m)h_{1+r+r_1}^{d-r-r_1}(m) \\
&= \sum_{m=1}^d \ll_m \Big(\sum_{t=0}^r m^t ( v_r(t)-\lceil
v_r(t)\rceil )\Big) \Big(\frac{(mn)^{r_1}}{(r_1)!}\a_{1+r}\ldots
\a_{r+r_1}+\sum_{j=0}^{r_1-1} m^j a_{1+r}^{r_1}(j)\Big)h_{1+r+r_1}^{d-r-r_1}(m)\\
&= \sum_{m=1}^d \ll_m \Big( m^{r+r_1}
\frac{n^{r_1}}{(r_1)!}\a_{1+r}\ldots \a_{r+r_1} ( v_r(t)-\lceil
v_r(t)\rceil)+ \\ &\hskip 5cm \sum_{h=0}^{r+r_1-1}m^h
\big(\sum_{0\le t\le r\atop{0\le j\le r_1-1\atop{t+j=h}}}\big(
v_r(t)-\lceil v_r(t)\rceil \big ) a_{1+r}^{r_1}(j)\big) \Big)
h_{1+r+r_1}^{d-r-r_1}(m).
\end{split}
\end{equation*}
Let
\begin{equation*}
    v_{r,r_1}(h)=\sum_{0\le t\le
r\atop{0\le j\le r_1-1\atop{t+j=h}}}\big( v_r(t)-\lceil v_r(t)\rceil
\big ) a_{1+r}^{r_1}(j), \ 0\le h\le r+r_1-1;
\end{equation*}
\begin{equation*}
    v_{r,r_1}(r+r_1)=\frac{(n)^{r_1}}{(r_1)!}\a_{1+r}\ldots \a_{r+r_1} \left(
v_r(t)-\lceil v_r(t)\rceil\right);
\end{equation*}
\begin{equation*}
    v_{r,r_1}(0)=\left(v_r(0)-\lceil
    v_r(0)\rceil\right)a_{1+r}^{r_1}(0)=\left(v_r(0)-\lceil
    v_r(0)\rceil\right)y_{1+r}^{r_1}(0).
\end{equation*}
It is easy to see that $|v_{r,r_1}(0)|<\ep_2$. Then
\begin{equation*}
\begin{split}
& \sum_{m=1}^d \ll_m \Big(\sum_{t=0}^r m^t ( v_r(t)-\lceil
v_r(t)\rceil)\Big)w_{1+r}^{r_1}(m)h_{1+r+r_1}^{d-r-r_1}(m) \\
&= \sum_{m=1}^d \ll_m \Big(\sum_{t=0}^{r+r_1} m^t v_{r,r_1}(t) \Big)
h_{1+r+r_1}^{d-r-r_1}(m).
\end{split}
\end{equation*}

To sum up, we have {\footnotesize
\begin{equation*}
\begin{split}
&\sum_{m=1}^d \ll_m \Big(\sum_{t=0}^r m^t \left( v_r(t)-\lceil
v_r(t)\rceil\right)\Big)u_{1+r}^{d-r}(m) \\
&\approx \ll \frac{(n)^{d-r}}{(d-r)!}\a_{1+r}\ldots \a_{d} (
v_r(t)-\lceil v_r(t)\rceil
)-\sum_{r_1=1}^{d-r-1}\Bigg(\sum_{m=1}^d\ll_m\Big(\sum_{t=0}^{r_1+r}m^tv_{r,r_1}(t)\Big)
h_{1+r+r_1}^{d-(r+r_1)}(m)\Bigg)\\
&\hskip11.5cm +\vartriangle(K\ep_2).
\end{split}
\end{equation*}}
Together with ( \ref{a7}), we have {\footnotesize
\begin{equation*}
\begin{split}
\sum_{m=1}^d \ll_m \Big(\sum_{t=0}^r m^tv_r(t)\Big)h_{1+r}^{d-r}(m)
&\approx \ll \frac{(n)^{d-r}}{(d-r)!}\a_{1+r}\ldots \a_{d} \Big(
v_r(t)-\lceil v_r(t)\rceil \Big)\\
&-\sum_{r_1=1}^{d-r-1}\Bigg(\sum_{m=1}^d\ll_m\Big(\sum_{t=0}^{r_1+r}m^tv_{r,r_1}(t)\Big)
h_{1+r+r_1}^{d-(r+r_1)}(m)\Bigg)+\vartriangle(2K\ep_2).
\end{split}
\end{equation*}}
The proof of the claim  is completed. \end{proof}

We will use the claim repeatedly. First using (\ref{addye}) we have
\begin{equation*}
    \sum_{m=1}^d\ll_mz_1^d(m)\approx \sum_{m=1}^d \ll_m
    \bigg (w_1^d(m)-\sum_{j_1=1}^{d-1} w_1^{j_1}(m) h_{1+j_1}^{d-j_1}(m)\bigg).
\end{equation*}
By (\ref{a6}), we have
\begin{equation*}
\begin{split}
\sum_{m=1}^d \ll_mw_1^d(m)& \approx \sum_{m=1}^d \ll_m m^d
\frac{n^d}{d!}\a_1\ldots \a_d+\sum_{m=1}^d \ll_my_1^d\\
& \approx \ll \frac{n^d}{d!}\a_1\ldots \a_d +\vartriangle(K\ep_2).
\end{split}
\end{equation*}
Using this, (\ref{a6}) and the claim, we have {\footnotesize
\begin{equation*}
\begin{split} \sum_{m=1}^d \ll_mz_1^d(m)&
\approx \ll \frac{n^d}{d!}\a_1\ldots
\a_d-\sum_{m=1}^d\ll_m\sum_{j_1=1}^{d-1}\Big(m^{j_1}\frac{n^{j_1}}{j_1!}\a_1\ldots
\a_{j_1}+\sum_{t=0}^{j_1-1}m^ta_1^{j_1}(t)\Big)h_{1+j_1}^{d-j_1}(m)+\vartriangle(K\ep_2)\\
&\approx \ll \frac{n^d}{d!}\a_1\ldots
\a_d-\left(\sum_{j_1=1}^{d-1}\ll
\frac{n^{d-j_1}}{(d-j_1)!}\a_{1+j_1}\ldots\a_d\Big(\frac{n^{j_1}}{j_1!}\a_1\ldots\a_{j_1}-
\lceil \frac{n^{j_1}}{j_1!}\a_1\ldots\a_{j_1} \rceil\Big)\right)\\
& +
\sum_{j_1=1}^{d-1}\sum_{j_2=1}^{d-j_1-1}\Bigg(\sum_{m=1}^d\ll_m\bigg(m^{j_1+j_2}
\Big(\frac{n^{j_1}}{j_1!}\a_1\ldots\a_{j_1}- \lceil
\frac{n^{j_1}}{j_1!}\a_1\ldots\a_{j_1}
\rceil\Big)\frac{n^{j_2}}{j_2!}\a_{1+j_1}\ldots
\a_{j_1+j_2}\\
& +
\sum_{t=0}^{j_1+j_2-1}m^tv_{j_1,j_2}(t)\bigg)h_{1+j_1+j_2}^{d-(j_1+j_2)}(m)\Bigg)+\vartriangle(\big(2(d-1)K+K\big)\ep_2).
\end{split}
\end{equation*}}
Note that here we use $v_{j_1}(t)=a_1^{j_1}(t), t=0,1,\ldots, j_1-1$
and $v_{j_1}(j_1)=\frac{n^{j_1}}{j_1!}\a_1\ldots\a_{j_1}$.

Recall the definition of $U(\cdot)$:
\begin{equation*}
    \frac{n^d}{d!}\a_1\ldots\a_d=U(n;d),
\end{equation*}
\begin{equation*}
    \Big(\frac{n^{j_1}}{j_1!}\a_1\ldots\a_{j_1}-
\lceil \frac{n^{j_1}}{j_1!}\a_1\ldots\a_{j_1}
\rceil\Big)\frac{n^{j_2}}{j_2!}\a_{1+j_1}\ldots
\a_{j_1+j_2}=U(n;j_1,j_2).
\end{equation*}
Substituting these in the above equation, we have {\footnotesize
\begin{equation*}
\begin{split}
\sum_{m=1}^d\ll_mz_1^d(m)& \approx \ll U(n;d)-\sum_{j_1=1}^{d-1}\ll
U(n; j_1,d-j_1)\\
&\hskip0.5cm
+\sum_{j_1=1}^{d-1}\sum_{j_2=1}^{d-j_1-1}\Bigg(\sum_{m=1}^d\ll_m\bigg(m^{j_1+j_2}U(n;j_1,j_2)
+\sum_{t=0}^{j_1+j_2-1}m^tv_{j_1,j_2}(t)\bigg)h_{1+j_1+j_2}^{d-(j_1+j_2)}(m)\Bigg)\\
&\hskip0.5cm +\vartriangle(2d K\ep_2)
\end{split}
\end{equation*}}
Using the claim again, we have: {\footnotesize
\begin{equation*}
\begin{split}
\sum_{m=1}^d\ll_mz_1^d(m)& \approx \ll U(n;d)-\sum_{j_1=1}^{d-1}\ll
U(n; j_1,j_2)+ \sum_{j_1=1}^{d-1}\sum_{j_2=1}^{d-j_1-1}\ll
U(n;j_1,j_2,d-j_1-j_2)
\\
&-\sum_{j_1=1}^{d-1}\sum_{j_2=1}^{d-j_1-1}\sum_{j_3=1}^{d-(j_1+j_2)-1}\Bigg(\sum_{m=1}^d\ll_m
\bigg(m^{j_1+j_2+j_3}U(n;j_1,j_2,j_3) +\\
& \sum_{t=0}^{j_1+j_2+j_3-1}
m^tv_{j_1,j_2,j_3}(t)\bigg)h_{1+j_1+j_2+j_3}^{d-(j_1+j_2+j_3)}(m)\Bigg)+\vartriangle(2dK\ep_2+2d^2K\ep_2).
\end{split}
\end{equation*}}

Inductively, we have $\sum_{m=1}^d\ll_mz_1^d(m)$ {\footnotesize
\begin{equation*}
\begin{split}
& \approx
\Big(\sum_{l=1}^d(-1)^{l-1}\sum_{\substack{j_1,j_2,\ldots,j_l\in \N\\
j_1+j_2+\ldots +j_l=d}}\ll U(n;j_1,j_2,\ldots,j_l)\Big)
+\vartriangle(2dK\ep_2+2d^2K\ep_2+\ldots+2d^{d-1}K\ep_2)\\
&\approx
\Big(\sum_{l=1}^d(-1)^{l-1}\sum_{\substack{j_1,j_2,\ldots,j_l\in \N\\
j_1+j_2+\ldots +j_l=d}} \ll U(n;j_1,j_2,\ldots,j_l)\Big)
+\vartriangle\Big((d+d^2+\ldots+d^{d-1})(2K\ep_2)\Big).
\end{split}
\end{equation*}}
The proof of (\ref{longproof}) is now finished.

\section{Applications}\label{section-8}

Our main results can be applied to get results in the theory of
dynamical systems. As the limitation of the length of the paper,
here we only state the results and the detailed proofs will appear
in a forthcoming paper by the same authors.


\subsection{$d$-step almost automorpy}
The notion of almost automorphy was first introduced by Bochner in
1955 in a work of differential geometry \cite{Bochner55, Bochner62}.
Veech showed that each almost automorphic minimal system is an
almost one-to-one extension of a compact metric abelian group
rotation \cite{V65}. Let $(X,T)$ be a minimal system and $d\in \N$.
$(X,T)$ is called a {\em $d$-step almost automorphic} system if it
is an almost one-to-one extension of a $d$-step nilsystem. Let
$\pi:X\lra Y$ be the almost one-to-one extension with $Y$ being a
$d$-step nilsystem. A point $x\in X$ is {\em $d$-step almost
automorphic} if $\pi^{-1}\pi(x)=\{x\}$.

\medskip

Using the main results of this paper, we show that $\F_{Poi_d}$,
$\F_{Bir_d}$ and $\F_{d,0}$ can be used to characterize $d$-step
almost automorphy, i.e. in some sense, $\F_{Poi_d}$ and $\F_{d,0}^*$
can not be distinguished ``dynamically''. Similar results can be
found in the next subsections.

\begin{thm}\cite{HSY}
Let $(X,T)$ be a minimal system, $x\in X$ and $d\in\N$. Then the
following statements are equivalent
\begin{enumerate}
\item $x$ is $d$-step almost automorphic.

\item $N(x,V)\in \F_{Poi_d}^*$ for each
neighborhood $V$ of $x$.

\item $N(x,V)\in \F_{Boi_d}^*$ for each
neighborhood $V$ of $x$.

\item $N(x,V)\in \F_{d,0}$ for each
neighborhood $V$ of $x$.

\end{enumerate}
\end{thm}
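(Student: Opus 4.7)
My plan is to establish the chain of implications $(1)\Rightarrow(4)\Rightarrow(3)\Rightarrow(2)\Rightarrow(1)$, where the first three implications follow essentially by formal manipulation of the family inclusions already in hand, while the last one requires the structure theory of minimal systems.

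For $(1)\Rightarrow(4)$, suppose $(X,T)$ is an almost one-to-one extension $\pi:X\to Y$ of a $d$-step nilsystem $(Y,S)$ and $\pi^{-1}\pi(x)=\{x\}$. Given any neighborhood $V$ of $x$, I would use the almost one-to-one property (together with $\pi$ being a closed map and $\{x\}=\pi^{-1}\pi(x)$) to produce an open neighborhood $W\subseteq Y$ of $\pi(x)$ with $\pi^{-1}(W)\subseteq V$. Then $N(x,V)\supseteq N(x,\pi^{-1}(W))=N(\pi(x),W)$, and the right-hand side is a Nil$_d$ Bohr$_0$-set by definition (after translating $\pi(x)$ to the base point via Proposition \ref{replace}). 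Hence $N(x,V)\in\F_{d,0}$.

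For $(4)\Rightarrow(3)$ and $(3)\Rightarrow(2)$, I would invoke Corollary~D: $\F_{Poi_d}\subset\F_{Bir_d}\subset\F_{d,0}^*$. Dualizing the second inclusion gives $\F_{d,0}=(\F_{d,0}^*)^*\subseteq\F_{Bir_d}^*$, which yields $(4)\Rightarrow(3)$. Dualizing the first gives $\F_{Bir_d}^*\subseteq\F_{Poi_d}^*$, yielding $(3)\Rightarrow(2)$. Both steps are purely formal from Corollary~D and the identity $\F^{**}=\F$ proved in Section~\ref{section-pre}.

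The heart of the argument is $(2)\Rightarrow(1)$. My plan here is to work with the regionally proximal relation of order $d$, $RP^{[d]}$, whose quotient $X/RP^{[d]}$ is (by the Host--Kra--Maass structure theorem) the maximal $d$-step pro-nilfactor $\pi_d:X\to X_d$. I would try to show that if $x$ satisfies (2), then $RP^{[d]}(x)=\{x\}$, and that the orbit closure of $\pi_d(x)$ in $X_d$ is in fact a $d$-step nilsystem (not merely a pro-nilsystem), so that restricting to this orbit closure realizes $(X,T)$ as an almost one-to-one extension of a $d$-step nilsystem with $x$ in the injective fibre. Concretely, for any $y\in X$ with $(x,y)\in RP^{[d]}$ and $y\neq x$, I would aim to construct a set of $d$-recurrence disjoint from $N(x,V)$ for a sufficiently small $V$: using the characterization of $RP^{[d]}$ via $d$-step dynamical cubes, one can find configurations that force $U\cap T^{-n}U\cap\cdots\cap T^{-dn}U\neq\emptyset$ on translates staying away from $x$. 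This is the main obstacle; it relies on both Theorem~A (to approximate $N(x,V)$ from below by a combinatorial $d$-recurrence set) and the cube-structure description of $RP^{[d]}$, and is where the detailed arguments postponed to \cite{HSY} come in.
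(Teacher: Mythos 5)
The paper does not actually present a proof of this theorem; it is stated with a citation to the forthcoming article \cite{HSY}, so there is no internal argument to compare against. Taking your proposal on its own terms: the chain $(1)\Rightarrow(4)\Rightarrow(3)\Rightarrow(2)$ is correct as written. For $(1)\Rightarrow(4)$, since $\pi:X\to Y$ is a closed continuous surjection onto a $d$-step nilsystem with $\pi^{-1}\pi(x)=\{x\}\subseteq V$, the set $W=Y\setminus\pi(X\setminus V)$ is open, contains $\pi(x)$, and has $\pi^{-1}(W)\subseteq V$, so $N(x,V)\supseteq N(\pi(x),W)\in\F_{d,0}$; Proposition \ref{replace} is not in fact needed, because the definition of a Nil$_d$ Bohr$_0$-set already allows an arbitrary base point $x_0$. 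The steps $(4)\Rightarrow(3)\Rightarrow(2)$ are exactly the dualization of Corollary D together with $(\F^*)^*=\F$.

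The genuine gap, which you flag yourself, is $(2)\Rightarrow(1)$. Your reduction to showing $\RP^{[d]}[x]=\{x\}$ is the right strategy, and once that equality holds $(1)$ does follow, since Definition \ref{de-nilsystem} already counts inverse limits as $d$-step nilsystems and so the maximal pro-nilfactor of Host--Kra--Maass/Shao--Ye qualifies. However, the contrapositive argument requires the implication: if $(x,y)\in\RP^{[d]}(X)$ with $y\neq x$ and $U\ni y$, then $N(x,U)\in\F_{Poi_d}$. With that, one picks disjoint $V\ni x$, $U\ni y$; then $N(x,V)\cap N(x,U)=\emptyset$, contradicting $N(x,V)\in\F_{Poi_d}^*$. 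This implication is precisely one direction of Theorem \ref{RP-d}, which the paper again only states with a citation to \cite{HSY}; none of Theorems A, B, C or Corollary D supplies it. In particular, your appeal to Theorem A is off: Theorem A places a set of the form $\{n:U\cap T^{-n}U\cap\cdots\cap T^{-dn}U\neq\emptyset\}$ \emph{inside} a Nil$_d$ Bohr$_0$-set, whereas what is needed here is to show that return-time sets $N(x,U)$ along an $\RP^{[d]}$-pair \emph{contain} sets of $d$-recurrence, which is a different (and harder) direction. So your formal scaffolding is sound, but the nontrivial implication remains as open in your write-up as it is in the paper itself.
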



\subsection{Regionally proximal relation of order $d$}

\subsubsection{Regionally proximal relation}
Regionally proximal relation plays a very import role in the theory
of topological dynamics. 
It is the main tool to characterize the equicontinuous structure
relation $S_{eq}(X)$ of a system $(X, T)$; i.e. to find the smallest
closed invariant equivalence relation $R(X)$ on $(X, T)$ such that
$(X/ R(X), T)$ is equicontinuous. Veech \cite{V68} gave the first
proof of the fact that the regionally proximal relation is an
equivalence one. Also he showed that Poincar\'e sets can be used to
characterize regionally proximal relation.

\subsubsection{Regionally proximal relation of order $d$} In
\cite{HK05} Host and Kra defined a $d$-step nilfactor for each
ergodic system, see also \cite{Z}. To get a similar factor in
topological dynamics Host, Maass and Kra introduced the notion of
regionally proximal relation of higher order.

\begin{de}\cite{HM, HKM}
Let $(X, T)$ be a system and let $d\ge 1$ be an integer. A pair $(x,
y) \in X\times X$ is said to be {\em regionally proximal of order
$d$} if for any $\d  > 0$, there exist $x', y'\in X$ and a vector
${\bf n} = (n_1,\ldots , n_d)\in\Z^d$ and $\ep\in \{0,1\}^d$ such
that $d(x, x') < \d, d(y, y') <\d$, and $$\rho(T^{{\bf n}\cdot
\ep}x', T^{{\bf n}\cdot \ep}y') < \d\ \text{for any $\ep\neq
(0,0,\ldots,0)$},$$ where ${\bf n}\cdot
\ep=n_1\ep_1+\ldots+n_d\ep_d$.

The set of regionally proximal pairs of order $d$ is denoted by
$\RP^{[d]}(X)$, which is called {\em the regionally proximal
relation of order $d$}.
\end{de}

It is easy to see that $\RP^{[d]}(X)$ is a closed and invariant
relation for all $d\in \N$. When $d=1$, $\RP^{[d]}(X)$ is nothing
but the classical regionally proximal relation. In \cite{HKM}, for
distal minimal systems the authors showed that $\RP^{[d]}(X)$ is a
closed invariant equivalence relation, and the quotient of $X$ under
this relation is its maximal $d$-step nilfactor. Recently, these
results are shown to be true for general minimal systems by Shao-Ye
\cite{SY}. We remark that a point is $d$-step almost automorphic if
and only if $\RP^{[d]}[x]=\{x\}$. Moreover, we have the following
theorem whose proofs are based on the results built in this paper.

\begin{thm}\label{RP-d}\cite{HSY}
Let $(X,T)$ be a minimal system and $d\in \N$. The following
statements are equivalent:

\begin{enumerate}

\item $(x,y)\in \RP^{[d]}(X)$

\item
$N(x,U)\in \F_{Poi_d}$ for each neighborhood $U$ of $y$.

\item $N(x,U)\in \F_{Bir_d}$ for each
neighborhood $U$ of $y$.

\item $N(x,U)\in \F_{d,0}^*$ for each
neighborhood $U$ of $y$.
\end{enumerate}
\end{thm}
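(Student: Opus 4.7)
The plan is to establish the cycle $(2)\Rightarrow(3)\Rightarrow(4)\Rightarrow(1)\Rightarrow(2)$. The main inputs are the Shao--Ye structure theorem \cite{SY}, which identifies $Y:=X/\RP^{[d]}(X)$ (with induced dynamics $S$ and factor map $\pi:X\to Y$) as the maximal $d$-step nilfactor of $(X,T)$, together with Corollary~D's chain $\F_{Poi_d}\subset\F_{Bir_d}\subset\F^*_{d,0}$.

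The implications $(2)\Rightarrow(3)\Rightarrow(4)$ are immediate consequences of that inclusion chain. For $(4)\Rightarrow(1)$ I would argue contrapositively: if $(x,y)\notin\RP^{[d]}(X)$ then $\pi(x)\ne\pi(y)$, so one can pick disjoint open sets $V_x\ni\pi(x)$ and $V_y\ni\pi(y)$ in $Y$ and set $U:=\pi^{-1}(V_y)$, an open neighborhood of $y$ in $X$. Any $n\in N(x,U)$ satisfies $S^n\pi(x)=\pi(T^nx)\in V_y$, so $n\notin N(\pi(x),V_x)$. Since $Y$ is a $d$-step nilsystem, $N(\pi(x),V_x)\in\F_{d,0}$, and we have produced a Nil$_d$ Bohr$_0$-set disjoint from $N(x,U)$, contradicting $(4)$.

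The heart of the argument is $(1)\Rightarrow(2)$. Given $(x,y)\in\RP^{[d]}$ and an open $U\ni y$, I would attack it in two stages: first show $N(x,U)\in\F^*_{d,0}$, then promote this to $\F_{Poi_d}$-membership. The promotion step uses Corollary~\ref{d-recurrence}: for any ergodic $(W',\nu,R)$ and any $B$ with $\nu(B)>0$, the $d$-recurrence set coincides, up to a set of zero upper Banach density, with a Nil$_d$ Bohr$_0$-set $A$; because $\F_{d,0}$ is a filter one can shrink $A$ to $A'\in\F_{d,0}$ missing the exceptional zero-density set, and then $N(x,U)\cap A'\ne\emptyset$ (from the first stage) yields the desired $d$-recurrence time. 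The main obstacle is the first stage: converting the abstract equivalence $\pi(x)=\pi(y)$ into the assertion that $N(x,U)$ meets every Nil$_d$ Bohr$_0$-set $A$. My plan is to realize $A$ via Theorem~A as a topological-recurrence set $\{n:W\cap T_Z^{-n}W\cap\cdots\cap T_Z^{-dn}W\ne\emptyset\}$ for some minimal $d$-step nilsystem $(Z,T_Z)$, form the product $(X\times Z,T\times T_Z)$, select a minimal subsystem through a well-chosen point $(x,z)$ with $z\in W$, and exploit that on the quotient $Y\times Z$ (itself a $d$-step nilsystem) the hypothesis $\pi(x)=\pi(y)$ collapses $(\pi(x),z)$ and $(\pi(y),z)$ into the same point, forcing orbit returns near $(\pi(y),z)$ which lift to $n$ bringing $T^nx$ into $U$ while simultaneously $T_Z^{in}z\in W$ for $0\le i\le d$. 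The delicate point throughout is ensuring that the lift from the $Y\times Z$-level return respects the neighborhood $U$ rather than merely a neighborhood of the fiber $\pi^{-1}(\pi(y))$, which is exactly where the generalized-polynomial representation of $\F_{d,0}$ (Theorem~B) and the construction of Theorem~C intervene to give explicit, nilsystem-level control over the lift.
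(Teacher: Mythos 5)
The paper does not actually prove Theorem~\ref{RP-d}: Section~\ref{section-8} explicitly says the detailed proofs are deferred to the forthcoming article \cite{HSY}, and only the statement is given here. So there is no in-paper argument to compare against; I can only assess your proposal on its own terms.

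Your treatment of $(2)\Rightarrow(3)\Rightarrow(4)$ via Corollary~D, and of $(4)\Rightarrow(1)$ by contrapositive (separating $\pi(x)$ from $\pi(y)$ in the maximal $d$-step nilfactor $Y=X/\RP^{[d]}(X)$ and pulling back to produce a Nil$_d$ Bohr$_0$-set disjoint from $N(x,U)$), is correct and clean. The problem is $(1)\Rightarrow(2)$, which is where all the content lies. You propose a two-stage reduction: first get $N(x,U)\in\F^*_{d,0}$, then ``promote'' to $\F_{Poi_d}$-membership via Corollary~\ref{d-recurrence}. Neither stage is actually established. Stage one is left as a plan-of-attack: you describe forming a product with a nilsystem and passing to $Y\times Z$, but you yourself flag that the crucial lift from returns near $(\pi(y),z)$ to returns of $T^nx$ into $U$ (rather than merely into a neighborhood of the whole fiber $\pi^{-1}\pi(y)$) is the delicate point, and you do not resolve it. More seriously, stage two contains an outright error: you assert that ``because $\F_{d,0}$ is a filter one can shrink $A$ to $A'\in\F_{d,0}$ missing the exceptional zero-density set.'' The filter property only gives closure under finite intersections; it does not let you delete a prescribed density-zero set $M$ from a Nil$_d$ Bohr$_0$-set and stay inside $\F_{d,0}$. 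Knowing $N(x,U)\cap A\ne\emptyset$ (stage one) does not preclude that intersection lying entirely inside $M$, so you cannot conclude $N(x,U)$ meets the recurrence set $I\supset A\setminus M$. To close this gap one would need something quantitative --- for instance, that $N(x,U)\cap A$ has positive upper Banach density --- and that requires a genuinely different argument from the one sketched. In fact, if the promotion worked at the level of families it would say $\F^*_{d,0}\subset\F_{Poi_d}$, which together with Corollary~D would resolve Problem~B-III affirmatively; the paper explicitly leaves that open, so any proof of $(1)\Rightarrow(2)$ must exploit the dynamical origin of $N(x,U)$ and the hypothesis $(x,y)\in\RP^{[d]}(X)$ directly, rather than routing through the abstract family inclusion as you attempt.
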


We remark that for $d=1$ the above theorem is known, see for example
\cite{V68, BG, HLY}.

\subsection{Nil$_d$ Bohr$_0$ sets and sets $SG_d(P)$}

In this article to study Nil$_d$ Bohr$_0$ sets, we use generalized
polynomials. In \cite{HK10}, Host and Kra introduced an interesting
set ($SG_d$ set) to study the Nil$_d$ Bohr$_0$ sets. Here we state
some results and give some questions. First we recall definitions
introduced by Host and Kra in \cite{HK10}.

\subsubsection{Sets $SG_d(P)$ and $SG_d^*$}

Let $d\ge 0$ be an integer and let $P=\{p_i\}_i$ be a (finite or
infinite) sequence in $\N$. The {\em set of sums with gaps of length
less than $d$} of $P$ is the set $SG_d(P)$ of all integers of the
form
$$\ep_1p_1+\ep_2p_2+\ldots +\ep_np_n$$ where $n\ge 1$ is an integer,
$\ep_i\in \{0,1\}$ for $1\le i\le n$, the $\ep_i$ are not all equal
to $0$, and the blocks of consecutive $0$'s between two $1$ have
length less than $d$. A subset $A \subseteq \N$ is an $SG^*_d$-set
if $A \cap SG_d(P)\neq \emptyset$ for every infinite sequence $P$ in
$\N$.

Note that in this definition, $P$ is a sequence and not a subset of
$\N$. For example, if $P=\{p_1,p_2,\ldots\}$, then $SG_1(P)$ is the
set of all sums $p_m+p_{m+1}+\ldots +p_n$ of consecutive elements of
$P$, and thus it coincides with the set $\D(S)$ where $S=\{p_1,
p_1+p_2, p_1+p_2+p_3,\ldots\}$. Therefore $SG^*_1$-sets are the same
as $\D^*$-sets.

In \cite{HK10} a notion called  {\em strongly piecewise-$F$},
written PW- $\F$ was introduced and the following proposition was
proved

\begin{prop}
Every $SG_d^*$-set is a PW-Nil$_d$ Bohr$_0$-set.
\end{prop}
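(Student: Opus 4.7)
The plan is to argue by contrapositive. Suppose $A\subseteq \N$ is \emph{not} a PW-Nil$_d$ Bohr$_0$-set; I want to construct an infinite sequence $P=(p_i)_{i\ge 1}$ such that $A\cap SG_d(P)=\emptyset$, contradicting the assumption that $A$ is $SG_d^*$. The construction proceeds inductively: having chosen $p_1,\dots,p_{n-1}$, the negation of the strongly-piecewise property should supply arbitrarily long intervals $I$ on which $A$ misses every admissible translate of every prescribed Nil$_d$ Bohr$_0$-set. One picks $p_n$ from such an $I$, far enough past the previously chosen $p_i$'s, so that for every admissible $\epsilon\in\{0,1\}^n$ with gap pattern of length $<d$ and $\epsilon_n=1$, the resulting sum $\sum \epsilon_i p_i$ still lands inside the forbidden zone and hence outside $A$.

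The structural input is a description of Nil$_d$ Bohr$_0$-sets fine enough to track the gap-less-than-$d$ constraint built into $SG_d(P)$. Here I would invoke Theorem~B of the present paper, which identifies $\F_{d,0}$ with $\F_{GP_d}$: membership in a Nil$_d$ Bohr$_0$-set is governed by finitely many generalized polynomials of degree $\le d$ lying close to $\Z$. The proof then runs by induction on $d$. The base case $d=1$ is the classical statement that every $\Delta^*$-set is piecewise-Bohr$_0$, in the spirit of Veech's theorem. For the step $d-1\Rightarrow d$, decompose a typical element of $SG_d(P)$ as a ``block sum'' where each block is a maximal run of consecutive indices with $\epsilon_i=1$: the sequence of block sums behaves like an $SG_1$ object, while the interaction of $d-1$ blocks across admissible short gaps produces, under the $GP_d$ description, exactly a degree-$d$ polynomial contribution. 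This is what aligns $SG_d$ with nilpotency degree $d$, and the inductive hypothesis handles the residual lower-degree terms through a Nil$_{d-1}$ Bohr$_0$ sub-construction.

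The main obstacle, I expect, is keeping the choice of Nil$_d$ Bohr$_0$ witness uniform in $n$. At stage $n$ the PW-failure produces some Nil$_d$ Bohr$_0$-set $B_n$ that may depend on $p_1,\dots,p_{n-1}$, and the finite list of generalized polynomials describing $B_n$ threatens to grow with $n$. A compactness/diagonal extraction on the space of polynomial data -- using that each step only requires bounded-degree, finitely-parameterized constraints from $\F_{GP_d}$ -- should let one pass to a single limiting obstruction that works throughout the construction. The most delicate book-keeping is at the gaps of length exactly $d-1$: here the full degree $d$ is activated, the inductive hypothesis just barely applies, and one has to verify that the generalized-polynomial values attached to the new sum $\sum \epsilon_i p_i$ remain in the forbidden window when $p_n$ is suitably chosen. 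This matching of the combinatorial gap bound with the nilpotency degree is the core of the argument.
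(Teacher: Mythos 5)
This proposition is not proved in the paper at all: the authors quote it directly from Host--Kra \cite{HK10} as background for Question \ref{HK-question}, so there is no ``paper's own proof'' to compare your sketch against. In particular your plan of invoking the paper's Theorem~B could not possibly reproduce the original argument, since that theorem postdates \cite{HK10}. This already means the attempt is, at best, a genuinely different route, but in its present form it falls short of a proof.

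The central gap is the one you yourself flag and then wave past. Unwinding the definition of ``strongly piecewise,'' the negation of PW-Nil$_d$ Bohr$_0$ supplies \emph{one} fixed sequence of intervals $(I_j)$ along which no \emph{single} Nil$_d$ Bohr$_0$-set $B$ matches $A$ up to a translate for all scales; it does not give you arbitrarily long intervals on which $A$ avoids \emph{every} translate of \emph{every} Nil$_d$ Bohr$_0$-set. Your inductive construction of $P$ needs the latter (a universally bad window to put $p_n$ in), and the proposed fix---a compactness/diagonal extraction producing a ``single limiting obstruction''---is exactly what would need to be proved; it is not at all clear such a uniformization exists, because $\F_{d,0}$ is not finitely generated and the parameters of the generalized polynomials coming out of Theorem~B can degenerate in the limit. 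There is also a quantitative mismatch you have not addressed: you need to keep \emph{all} the admissible sums $\sum\epsilon_ip_i$ (with gaps $<d$) inside the forbidden window simultaneously, but as $n$ grows these sums sweep larger and larger ranges, so the window condition at stage $n$ must be compatible with every earlier choice, and nothing in the sketch controls this. Finally, the heuristic that ``block sums behave like an $SG_1$ object while $d-1$ blocks produce a degree-$d$ contribution'' is not substantiated; the actual link between the gap bound in $SG_d$ and nilpotency class $d$ in \cite{HK10} goes through structure theory and nilsequences, not through an induction on block decompositions, and you have not shown the two pictures line up. As it stands, this is a plan with the key lemma unproved, not a proof.
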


Host and Kra asked the following question.
\begin{ques}\label{HK-question} Is every Nil$_d$ Bohr$_0$-set an
$SG_d^*$-set?
\end{ques}

Though we can not answer this question, we show that they can not be
distinguished dynamically (see Theorems \ref{RP-d} and
\ref{thm-SGd}). Using Theorem B, Question \ref{HK-question} can be
reformulated in the following way:

\begin{ques} Let $d\in\N$ and $S$ be an SG$_d$-set. Is it true that for
any $k\in\N$, any $P_1,\ldots, P_k\in \F_{SGP_d}$ and any $\ep_i>0$,
there is $n\in S$ such that
$$P_i(n)(\text{mod}\ \Z)\in (-\ep_i,\ep_i)$$
for all $i=1,\ldots,k$?
\end{ques}

We remark that since a $d$-step nilsystem is distal, the above
question has an affirmative answer for any IP-set.

\subsubsection{$SG_d$ sets and regionally proximal relation of order $d$}

We can use $SG_d$ sets to characterize regionally proximal relation
of order $d$. For each $d\in \N$, denote by $SG_d$ the collection of
all sets $SG_d(P)$, and $\F_{SG_d}$ the family generated by $SG_d$.

\begin{thm}\cite{HSY}\label{thm-SGd}
Let $(X,T)$ be a minimal system. Then for any $d\in\N$,
$(x,y)\in\RP^{[d]}$ if and only if $N(x,U)\in \F_{SG_d}$ for each
neighborhood $U$ of $y$.
\end{thm}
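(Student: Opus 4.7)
The plan is to prove both implications, leaning on the structure theorem of \cite{SY}, which identifies the maximal $d$-step nilfactor of a minimal system with $X/\RP^{[d]}$, together with the generalized-polynomial machinery built earlier in this paper.

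For the direction ``$N(x,U)\in\F_{SG_d}$ for every neighborhood $U$ of $y$ $\Rightarrow (x,y)\in\RP^{[d]}$'', the key combinatorial observation is that for any sequence $P=\{p_1,p_2,\ldots\}$ and any $d$ consecutive terms ${\bf n}=(p_{i+1},\ldots,p_{i+d})$, each subsum $\ep\cdot{\bf n}$ with $\ep\in\{0,1\}^d\setminus\{0\}$ belongs to $SG_d(P)$, since any block of zeros in such an $\ep$ has length at most $d-1<d$. Consequently an inclusion $SG_d(P)\subset N(x,B(y,\d))$ produces ${\bf n}_\d\in\Z^d$ with $T^{\ep\cdot{\bf n}_\d}x\in B(y,\d)$ for every nonzero $\ep$. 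Sending $\d\to 0$ and combining a diagonal compactness argument with the dynamical cube characterization of $\RP^{[d]}$ from \cite{HKM,SY} (refining ${\bf n}_\d$ so that the associated dynamical cube converges) extracts in the limit points $x'$ near $x$ and $y'$ near $y$ witnessing $(x,y)\in\RP^{[d]}$.

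For the converse, assume $(x,y)\in\RP^{[d]}$ and let $\pi:X\to Z$ be the factor map onto $X/\RP^{[d]}$, so $\pi(x)=\pi(y)=:z$. Given a neighborhood $U$ of $y$, minimality and an almost one-to-one refinement of $\pi$ furnish a neighborhood $V$ of $z$ in $Z$ with $N(x,\pi^{-1}(V))\subset N(x,U)$; it is therefore enough to produce $SG_d(P)\subset N(z,V)$ in the minimal $d$-step nilsystem $(Z,T)$. Modeling $(Z,T)$ via $\G_d/\Gamma$ as in \S\ref{section-nilsystem} and \S\ref{section-proof2}, and choosing $\d_k\searrow 0$, we inductively pick $p_{k+1}\in\N$ so that for each $\ep\in\{0,1\}^d\setminus\{0\}$ whose support meets the last $d$ positions of $p_1,\ldots,p_{k+1}$, the generalized polynomial $P(\,\cdot\,;\alpha_1,\ldots,\alpha_d)$ of \eqref{key-poly} evaluated at the corresponding $SG_d$-sum has fractional part bounded by $\d_{k+1}$; such a $p_{k+1}$ exists by the same nilrotation recurrence on $\G_d/\Gamma$ that powers the proofs of Theorems B(2) and C.

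The principal difficulty lies in the inductive construction in this last direction: the ``gaps less than $d$'' condition couples the choice of $p_{k+1}$ to the previous $d-1$ terms, forcing the $(k+1)$-st step to satisfy an exponentially growing family of return conditions simultaneously, rather than the single IP-type constraint that governs classical $\RP=\RP^{[1]}$. The nilpotent matrix computations with $\G_d$ and the special generalized polynomials $P(n;\alpha_1,\ldots,\alpha_d)$ developed in Sections \ref{section-proof2}--\ref{section-proof3} are tailored exactly to this combinatorial pattern, reducing the inductive step to a single return statement in a minimal $d$-step nilsystem.
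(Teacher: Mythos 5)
The paper does not actually prove Theorem~\ref{thm-SGd}: it is stated with a citation to the forthcoming article \cite{HSY}, and Section~\ref{section-8} explicitly says the detailed proofs are deferred, so no direct comparison with a proof in this paper is possible. Evaluating your proposal on its own terms: the first direction is sound in outline. The combinatorial observation is correct (any internal block of zeros in a nonzero $\ep\in\{0,1\}^d$ supported on $d$ consecutive positions has length at most $d-2<d$, so $\ep\cdot{\bf n}\in SG_d(P)$), and combined with the cube characterization of $\RP^{[d]}$ for minimal systems from \cite{SY,HKM} this gives $(x,y)\in\RP^{[d]}$.

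The second direction has a genuine gap. You try to reduce to the maximal $d$-step nilfactor $\pi:X\to Z=X/\RP^{[d]}$ via the claim that minimality and ``an almost one-to-one refinement of $\pi$'' produce a neighborhood $V$ of $z=\pi(x)=\pi(y)$ with $N(x,\pi^{-1}(V))\subset N(x,U)$. This requires $\pi^{-1}(V)\subset U$, which holds only when $\pi$ is injective at $x$, i.e.\ only when $x$ is $d$-step almost automorphic, and there is no ``almost one-to-one refinement'' that fixes this: if $(X,T)$ is, say, weakly mixing, then $Z$ is a point, $\pi^{-1}(V)=X$, and yet the theorem still asserts that $N(x,U)$ contains an $SG_d$-set for every pair $(x,y)$ and every neighborhood $U$ of $y$. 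Consequently your final inductive construction on $\G_d/\Gamma$ is aimed at the wrong target: it would produce $SG_d(P)\subset N(z,V)$ inside $Z$, and that does not imply $SG_d(P)\subset N(x,U)$ inside $X$. The correct argument must build the $SG_d$-structured return times in $X$ itself, exploiting the $(d+1)$-dimensional dynamical cube data attached to $(x,y)\in\RP^{[d]}$ by \cite{SY,HKM}; the nilfactor cannot carry the construction on its own.
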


A direct corollary of Theorem \ref{thm-SGd} is: let $(X,T)$ be a
minimal system, $x\in X$, and $d\in \N$. If $x$ is
$\F^*_{SG_d}$-recurrent, then it is $d$-step almost automorphic.
Since $\F_{SG_d}$ does not have the Ramsey property \cite{HSY}, we
do not know if the converse of the above corollary holds.


\end{document}